\documentclass[a4paper,12pt]{article}

\usepackage{amssymb}                                    % \rightrightarrows
\usepackage{mathrsfs}                    % \mathscr
\usepackage{amsmath}                    % Klammern, texte in Formeln
\usepackage{amsfonts}                   % Andere Schriftarten
\usepackage{amsthm}                     % Abschnitte, Theoreme
\usepackage[english]{babel}                % Deutsche Sprache
\usepackage[arrow, matrix, curve]{xy}    % Diagramme
\usepackage{graphics}
\usepackage{hyperref}
\usepackage{enumerate}
\usepackage{color}
\usepackage{comment}

%%%%%%%%%%%%%%%%%%%%%%%%%%%%%%%%%%%%%%%%%%%%%%%%%%%
%%%%%%%%%%%%%%%  Umgebungsvariablen  %%%%%%%%%%%%%%%
%%%%%%%%%%%%%%%%%%%%%%%%%%%%%%%%%%%%%%%%%%%%%%%%%%%%

\pagestyle{headings}
\setlength{\oddsidemargin}{-0.3 cm}
\setlength{\evensidemargin}{-0.3 cm}
\setlength{\textwidth}{16.5cm}
\setlength{\textheight}{24 cm}
\setlength{\topmargin}{-1 cm}

%%%%%%%%%%%%%%%%%%%%%%%%%%%%%%%%%%%%%%%%%%%%%%%%%%%
%%%%%%%%%%%%%%%  Shortcuts          %%%%%%%%%%%%%%%
%%%%%%%%%%%%%%%%%%%%%%%%%%%%%%%%%%%%%%%%%%%%%%%%%%%%

\newtheoremstyle{introduction}% name of the style to be used
  {3 pt}% measure of space to leave above the theorem. E.g.: 3pt
  {4 pt}% measure of space to leave below the theorem. E.g.: 3pt
  {\itshape}% name of font to use in the body of the theorem
  {}% measure of space to indent
  {\bfseries}% name of head font
  {:}% punctuation between head and body
  {.5em}% space after theorem head; " " = normal interword space
  {}% Manually specify head

\theoremstyle{plain}
\newtheorem{thm}{Theorem}[section]
\newtheorem{corollary}[thm]{Corollary}
\newtheorem{proposition}[thm]{Proposition}
\newtheorem{lemma}[thm]{Lemma}
\newtheorem{example}[thm]{Example}
\newtheorem{prop}[thm]{Proposition}

\theoremstyle{introduction}
\newtheorem*{thm*}{Theorem} %[section]

\theoremstyle{definition}
\newtheorem{definition}[thm]{Definition}
\newtheorem{remark}[thm]{Remark}

\DeclareMathOperator{\img}{Im}
\DeclareMathOperator{\Ker}{Ker}

\newcommand\Hom		{\mathrm{Hom}}

\newcommand\dSet	{\mathrm{dSet}}
\newcommand\Set     {\text{Set}}
\newcommand\sSet    {\text{sSet}}
\newcommand\Ab      {\mathrm{AbGr}}

\newcommand\Oper    {\text{Oper}}

\newcommand\iso     {\stackrel{\sim}{\longrightarrow}}

\newcommand{\calk }{\mathcal{K}}

\newcommand{\Z}{\mathbb{Z}}
\newcommand {\rt}{\mathrm{rt}}
\newcommand{\calK}{\mathcal{K}}
\newcommand{\calC}{\mathcal{C}}
\newcommand{\Sp}{\mathcal{S}\mathrm{p}}
\newcommand{\Mod}{\mathrm{Mod}}

\def\to{\rightarrow}

\def\sgn{\mathrm{sgn}}

\def\Aut{\mathrm{Aut}}
\def\Ch{\mathrm{Ch}}

%%%%%%%%%%%%%%%%%%%%%%%%%%%%%%%%%%%%%%%%%%%%%%%%%%%%
%%%%%%%%%%%%%%%  Dokument %%%%%%%%%%%%%%%%%%%%%%%%%%
%%%%%%%%%%%%%%%%%%%%%%%%%%%%%%%%%%%%%%%%%%%%%%%%%%%%

%% Version: 28.3.2011 MB

\title{Homology of dendroidal sets}
\author{Matija Ba\v{s}i\'{c} and Thomas Nikolaus}

\begin{document}
\maketitle

\begin{abstract}
We define for every dendroidal set $X$ a chain complex and show that this assignment determines a left Quillen functor. Then we define the homology groups $H_n(X)$  as the homology groups of this chain complex. 
This generalizes the homology of simplicial sets. Our main result is that the homology of $X$ is isomorphic to the homology of the associated spectrum $\calK(X)$ as discussed in \cite{BN} and \cite{NikKtheory}. 
Since these homology groups are sometimes computable we can identify some spectra $\calk(X)$ which we could not identify before.   
\end{abstract}

%%%%%%%%%%%%%%%%%%%%%%%%%%%%%%%%%%%%%%%%%%%%%%%%%%%%%%%%%%%%%%%%%%%%%%%%%%%%%%%%%%%%%%%%%%%%%%%%%%%%%%%%%%%%%%%%%%%%%%%%%%%%%%%%%%%%%%%%%%%%%%%%%%%%%%%%%%%%%%%%%%%%%%%%%%%%%%%%%%%%%%%%%%%%%%%%%%%%%%%%%%%%%%%%
\section{Introduction}
%%%%%%%%%%%%%%%%%%%%%%%%%%%%%%%%%%%%%%%%%%%%%%%%%%%%%%%%%%%%%%%%%%%%%%%%%%%%%%%%%%%%%%%%%%%%%%%%%%%%%%%%%%%%%%%%%%%%%%%%%%%%%%%%%%%%%%%%%%%%%%%%%%%%%%%%%%%%%%%%%%%%%%%%%%%%%%%%%%%%%%%%%%%%%%%%%%%%%%%%%%%%%%%%

The definition of the singular homology of a topological space can be divided into several steps.  First, we consider the singular simplicial set, then we take the chain complex associated to this simplicial set and finally we compute the homology of this chain complex. The part we want to focus on for now is the construction which associates a chain complex $\Ch(S)$ to a simplicial set $S$. This chain complex is freely generated by the simplices of $S$ where an $n$-simplex has degree $n$ and the differential is given by the alternating sum of faces. 

The notion of a dendroidal set is a generalization of a simplicial set.
Dendroidal sets have been introduced by I. Moerdijk and I. Weiss (\cite{MoerW07}, \cite{MW09}) and shown to yield a good model for homotopy coherent operads by D.-C. Cisinski and I. Moerdijk (\cite{MC11a}, \cite{MC10}, \cite{MC11}). A dendroidal set $X$ has a set $X_T$ of $T$-dendrices for every tree $T$. We think of a tree as a generalization of a linearly ordered set. The category of simplicial sets embeds fully faithfully into the category of dendroidal sets $i_!: \sSet \to \dSet$, and in particular every simplicial set $S$ can be considered as a dendroidal set $i_!S$ which is `supported' on linear trees.  \\

The main construction of this paper, given in Section \ref{sectionbla}, is that of a non-negatively graded chain complex $\Ch(X)$ for every dendroidal set $X$ which generalizes the chain complex associated to simplicial sets.  More concretely, this means that for a simplicial set $S$ the two chain complexes $\Ch(S)$ and $\Ch(i_!S)$ are naturally isomorphic. 
  
The chain complex $\Ch(X)$ is, as a graded abelian group, generated by the isomorphism classes of (non-degenerate) dendrices of $X$. A $T$-dendrex has degree $|T|$, where $|T|$ is the number of vertices of $T$. 
The differential is, as in the simplical case, a signed sum of faces of the tree but the sign conventions are slightly more complicated than the simplicial signs.
 We discuss a normalized and an unnormalized variant of this chain complex in parallel to the variants of the chain complex for simplicial sets. 
A related construction for planar dendroidal sets is discussed in the paper \cite{GLW}. 

We show that the functor $X \mapsto \Ch(X)$ is homotopically well-behaved. More precisely, it forms a left Quillen functor with respect to the \emph{stable} model structure on dendroidal sets (as introduced in \cite{BN}) and the projective (or injective) model structure on chain complexes. In particular the chain 
complex $\Ch(X)$ is an invariant of the stable homotopy type of the dendroidal set $X$. It follows that it can also be considered as an invariant of $\infty$-operads (modelled by the Cisinski-Moerdijk model structure on dendroidal sets).
We define the homology of a dendroidal set $X$ as the homology of the associated chain complex for a cofibrant replacement of $X$. It is a good invariant in the sense that it is computable in practice. For example we show in Corollaries \ref{cor_eins} and \ref{cor_zwei} that 
$$
H_n(\Omega[T]) = \begin{cases}
\Z^{\ell(T)} & \text{for } n = 0, \\
0 & \text{otherwise,}
\end{cases}
\qquad  \qquad
H_n(\Omega[T]/ \partial \Omega[T])
= \begin{cases}
\Z & \text{for } n = |T|,  \\
0 & \text{otherwise,}
\end{cases}
$$
where $\ell(T)$ is the number of leaves of $T$ and $|T|$ is the number of vertices of $T$. \\

In \cite{BN} and \cite{NikKtheory}, based on work of Heuts \cite{Heuts2}, we have shown that there is a functor which assigns to every dendroidal set $X$ a connective spectrum $\calK(X)$. 
This functor has the property that for a dendroidal set of the form $i_!Y$ it yields the suspension spectrum $\Sigma^\infty_+Y$ (Theorem 5.5. in \cite{NikKtheory}).
It also generalizes the  $K$-theory of symmetric monoidal categories. Since the definition is rather inexplicit it turns out to be hard to identify the spectrum associated to a dendroidal set even for `small' examples of dendroidal sets. 

The main result of the present paper is that for a dendroidal set $X$ the homology $H_n(X)$ agrees with the homology of the associated spectrum $\calK(X)$ (Theorem \ref{mainthm}). 
If we denote by $\mathrm{SHC}$ the stable homotopy category of spectra and by $\mathrm{grAb}$ the category of graded abelian groups, we may summarize the discussion by saying that there is a square 
$$\xymatrix{
\sSet \ar[rr]^{\Sigma^\infty_+}\ar[d]_{i_!} && \mathrm{SHC} \ar[d]^{H_*} \\
\dSet \ar[rru]^{\calK} \ar[rr]_{H_*} &  &\mathrm{grAb} 
}$$
which commutes up to natural isomorphism. 

Our main result allows us to learn something about the spectrum $\calK(X)$ associated to a dendroidal set $X$ without having to understand $\calK(X)$ first. For example we use this strategy to identify the spectrum associated to the dendroidal set $\Omega[T]/\partial \Omega[T]$ as the $n$-sphere $\Sigma^n \mathbb{S}$ where $n$ is the number of vertices of $T$. As one more application we show that the spectrum associated to the dendroidal version of the operad $A_\infty$ is trivial. 

\paragraph{Organization} In Section 2 we review the necessary background about dendroidal sets and prove some technical results that we use later. In Section 3 we introduce and discuss several conventions how to assign signs to faces and isomorphisms of dendroidal sets. These are used in the definition of the chain complexes $\Ch^{un}(X)$ and $\Ch(X)$ in Section 4. In Section 5 we prove that these chain complexes are homotopically well-behaved and equivalent. Our main result is contained in Section 6. Finally, section 7 contains a technical result that is needed to compute certain homologies.

\paragraph{Acknowledgements} We would like to thank Ieke Moerdijk for many helpful discussions and comments on the first draft of this paper.

%%%%%%%%%%%%%%%%%%%%%%%%%%%%%%%%%%%%%%%%%%%%%%%%%%%%%%%%%%%%%%%%%%%%%%%%%%%%%%%%%%%%%%%%%%%%%%%%%%%%%%%%%%%%%%%%%%%%%%%%%%%%%%%%%%%%%%%%%%%%%%%%%%%%%%%%%%%%%%%%%%%%%%%%%%%%%%%%%%%%%%%%%%%%%%%%%%%%%%%%%%%%%%%%
\section{Preliminaries on dendroidal sets}
%%%%%%%%%%%%%%%%%%%%%%%%%%%%%%%%%%%%%%%%%%%%%%%%%%%%%%%%%%%%%%%%%%%%%%%%%%%%%%%%%%%%%%%%%%%%%%%%%%%%%%%%%%%%%%%%%%%%%%%%%%%%%%%%%%%%%%%%%%%%%%%%%%%%%%%%%%%%%%%%%%%%%%%%%%%%%%%%%%%%%%%%%%%%%%%%%%%%%%%%%%%%%%%%

In this section we recall the basic notions concerning dendroidal sets. We use the definition of the category $\Omega$ of non empty finite rooted trees as in \cite{MoerW07} and \cite{MW09}.

A (non empty, finite, rooted) tree consists of finitely many edges and vertices. Each vertex has a (possibly empty) set of input edges and one output edge. 
For each tree $T$ there is a unique edge $\rt(T)$, called the root of $T$, which is not an input of any vertex. An edge which is not an output of a vertex is called a leaf. 
The set of leaves of a tree $T$ is denoted by $\ell(T)$. Edges which are both inputs and outputs  of vertices are called inner edges. 

Let $|T|$ be the number of vertices of $T$.  A vertex with no inputs is called a stump.  A vertex with one input is called unary and a tree with only unary vertices is called linear. A linear tree with $n$ vertices is denoted $L_n$. The trivial tree $L_0$ is a tree with no vertices and one edge. A tree with one vertex is called a corolla. A corolla with $n$ inputs is denoted $C_n$.   

Vertices of a tree $T$ generate a symmetric coloured operad $\Omega(T)$ and the morphisms in the category of trees $\Omega$ are given by morphisms of corresponding operads. Hence, $\Omega$ is a full subcategory of the category $\Oper$ of coloured operads. 

The category $\dSet$ of dendroidal sets is the category of presheaves of sets on the category of trees $\Omega$.  We denote by $\Omega[T]$ the dendroidal set represented by a tree $T$, i.e. $\Hom_\Omega(-, T)$. We denote $\eta:=\Omega[L_0]$. If $X$ is a dendroidal set and $T$ is a tree, we denote $X_T:=X(T)$. 
By the Yoneda lemma, we have $X_T=\Hom(\Omega[T], X)$. The elements of the set $X_T$ are called $T$-dendrices of $X$. If $f\colon S\to T$ is a morphism in $\Omega$ we denote $f^*=X(f)\colon X_T \to X_S$. 

Let $\Delta$ be the category of non-empty finite linear orders and order preserving functions. Then the category $\sSet$ of simplicial sets is the category of presheaves on $\Delta$. There is an inclusion $i\colon \Delta \to \Omega$ of categories given by $i([n])=L_n$. This inclusion induces a pair of adjoint functors 
\[
i_!: \xymatrix{\sSet \ar@<0.3ex>[r] & \dSet: i^* \ar@<0.7ex>[l]}.
\]

We also note that the inclusion $\Omega \to \Oper$ induces a pair of adjoint functors 
\[
\tau_d: \xymatrix{\dSet \ar@<0.3ex>[r] & \Oper: N_d \ar@<0.7ex>[l]}.
\]
We call $N_d$ the dendroidal nerve functor. 

Note that the inputs of a vertex of a tree are not ordered in any way. A planar structure on a tree $T$ consists of a linear order on the set of inputs  of each vertex. A planar tree is given by a tree with a planar structure. Each planar tree generates a non-symmetric operad. We let $\Omega_p$ be the category of planar trees thought of as a full subcategory of the category of non-symmetric coloured operads.  

The symmetrization functor from non-symmetric operads to symmetric operads restricts to a functor $\Sigma\colon  \Omega_p \to \Omega$ which on objects forgets the planar structure. The main distinction between planar and non-planar trees is that every automorphism in $\Omega_p$ is an identity, while in $\Omega$ there are non-trivial automorphism. 

There is a dendroidal set $P:\Omega^{op} \to \Set$ such that $P(T)$ is the set of planar structures of the tree $T$. We also say that $P$ is the presheaf of planar structures. 

As in the category $\Delta$, there are elementary face and degeneracy maps in $\Omega$ which generate all morphisms. Let $e$ be an edge of tree $T$ and let $\sigma_e T$ be the tree obtained from $T$ by adding a copy $e'$ of the edge $e$ and a unary vertex between the $e'$ and $e$.  There is an epimorphism $\sigma_e \colon \sigma_e T \to T$ in $\Omega$ sending the unary operation in $\Omega(\sigma_e T)(e'; e)$ to the identity operation in $\Omega(T)(e;e)$. We call a morphism of this type an elementary degeneracy map.  

\[
  \xymatrix@R=4pt@C=10pt{
&&& _{a}&&_{b} &&& \\
&&& && &&& \\
& & && *=0{\,\,\, \bullet_w} \ar@{-}[uul]  \ar@{-}[uur] && \\
&&& && &&& \\
&  _c  & _d  &&*=0{\bullet }\ar@{-}[uu]^{e'}& _f & \\
&&& && &&& \\
&&&*=0{\,\,\, \bullet_v} \ar@{-}[uull] \ar@{-}[uul] \ar@{-}[uur]^e \ar@{-}[uurr]    &&&\\
&&& && &&& \\
&&&*=0{}\ar@{-}[uu]^r &&&
}
\xymatrix@R=4pt@C=10pt{ \\ \\ \\ \\ \longrightarrow \\ }
  \xymatrix@R=4pt@C=10pt{
  &&& && &&&& \\
&&& _{a}&&_{b} &&&& \\
&&& && &&&& \\
& _c & _d && *=0{\,\,\, \bullet_w} \ar@{-}[uul]  \ar@{-}[uur] &_f&& \\
&&& && &&&& \\
&&&*=0{\,\,\, \bullet_v} \ar@{-}[uull] \ar@{-}[uul] \ar@{-}[uur]^e \ar@{-}[uurr]    &&&&\\
&&& && &&&& \\
&&&*=0{}\ar@{-}[uu]^r &&&&
}
\]

If $e$ is an inner edge of a tree $T$, there is a tree $\partial_e T$ obtained by contracting the edge $e$ in $T$. The obvious monomorphism $\partial_e T\to T$ is called an inner elementary face map. 
\[
\xymatrix@R=3pt@C=8pt{
&&&&&&\\
&_c & _d & _a &_b&_f& \\
&&&&&&\\
&&&*=0{\quad \,\,\, \bullet_{v \circ_e w}} \ar@{-}[uull] \ar@{-}[uul] \ar@{-}[uu] \ar@{-}[uur] \ar@{-}[uurr]   &&&\\
&&&&&&\\
&&&*=0{}\ar@{-}[uu]^r &&&
}
\xymatrix@R=3pt@C=8pt{ \\ \\ \\ \longrightarrow \\ }
  \xymatrix@R=3pt@C=8pt{
&&& _{a}&&_{b} &&&& \\
&&&&&&& \\
& _c & _d && *=0{\,\,\, \bullet_w} \ar@{-}[uul]  \ar@{-}[uur] &_f&& \\
&&&&&&& \\
&&&*=0{\,\,\, \bullet_v} \ar@{-}[uull] \ar@{-}[uul] \ar@{-}[uur]^e \ar@{-}[uurr]    &&&&\\
&&&&&&& \\
&&&*=0{}\ar@{-}[uu]^r &&&&
}
\]

Let $w$ be a top vertex of a tree $T$, i.e. let all inputs of $w$ be leaves. There is a tree $\partial_w T$ obtained by chopping off the vertex $w$ (and all its inputs) in $T$. The obvious monomorphism $\partial_w T \to T$ is called a top elementary face map. 

\[
  \xymatrix@R=3pt@C=8pt{
&&&&&&& \\
& _c & _d &&_e&_f \\
&&&&&&& \\
&&&*=0{\,\, \bullet_v} \ar@{-}[uull] \ar@{-}[uul] \ar@{-}[uur] \ar@{-}[uurr]   &&\\
&&&&&&& \\
&&&*=0{}\ar@{-}[uu]^r &&
}
\xymatrix@R=3pt@C=8pt{ \\ \\ \\ \longrightarrow \\ }
  \xymatrix@R=3pt@C=8pt{
&&& _{a}&&_{b} &&&& \\
&&&&&&& \\
& _c & _d && *=0{\,\,\, \bullet_w} \ar@{-}[uul]  \ar@{-}[uur] &_f&& \\
&&&&&&& \\
&&&*=0{\,\,\, \bullet_v} \ar@{-}[uull] \ar@{-}[uul] \ar@{-}[uur]^e \ar@{-}[uurr]    &&&&\\
&&&&&&& \\
&&&*=0{}\ar@{-}[uu]^r &&&&
}
\]

Let $v$ be a bottom vertex of a tree $T$, i.e. let the root $\rt(T)$ be the output of $v$, and let $e$ be an input of $v$ such that all other inputs of $v$ are leaves.  Note that such a pair $(v,e)$ does not exist for every tree.  If $T$ is a corolla, then $e$ is also a leaf and it can be any input of $v$, while for trees with more than one vertex $e$ is the unique inner edge attached to $v$ (if it exists at all). There is a tree $\partial_{v,e} T$ obtained by chopping off the vertex $v$ (with the root and all its inputs except $e$) in $T$. The obvious monomorphism $\partial_{v,e} T \to T$ is called a bottom elementary face map. 

\[
  \xymatrix@R=3pt@C=8pt{
&& &&&&& \\
&& _a & &_b&  \\
&& &&&&& \\
&&&*=0{\,\,\, \bullet_w} \ar@{-}[uul]  \ar@{-}[uur]   &&\\
&& &&&&& \\
&&&*=0{}\ar@{-}[uu]^e &&
}
\xymatrix@R=3pt@C=8pt{ \\ \\ \\ \longrightarrow \\ }
  \xymatrix@R=3pt@C=8pt{
&&& _{a}&&_{b} &&&& \\
&& &&&&& \\
& _c & _d && *=0{\,\,\, \bullet_w} \ar@{-}[uul]  \ar@{-}[uur] &_f&& \\
&& &&&&& \\
&&&*=0{\,\, \bullet_v} \ar@{-}[uull] \ar@{-}[uul] \ar@{-}[uur]^e \ar@{-}[uurr]    &&&&\\
&& &&&&& \\
&&&*=0{}\ar@{-}[uu]^r &&&&
}
\]

We will say that $\partial_f \colon \partial_f T \to T$ is an elementary face map whenever $f$ is an inner edge $e$, top vertex $w$ or a pair $(v,e)$ of a bottom vertex $v$ with an input $e$ such that all other inputs are leaves. In that case we will say that $\partial_f T$ is an elementary face of $T$. 

Elementary face and degeneracy maps satisfy dendroidal identities. If $\partial_f T$ and $\partial_g \partial_f T$ are elementary faces of $T$ and $\partial_f T$ respectively, there are also elementary faces $\partial_{g'}T$ (with $g' \neq f$) and $\partial_{f'} \partial_{g'} T$ and the dendroidal identity  $\partial_g \partial_f  = \partial_{f'} \partial_{g'} $ is satisfied. 

Note that in most cases $g'=g$ and $f'=f$ (think of two inner edges $f$ and $g$), but this is not always the case (think of $f$ and $g$ being an inner edge and a top vertex attached to it, while $g'$ and $f'$ are both top vertices). Nonetheless, the above dendroidal identity is sufficiently good for our purposes. 

Moreover, there are obvious dendroidal identities relating two elementary degeneracy maps, an elementary degeneracy map with an elementary face map or any elementary map with an isomorphism. We refer the reader to Section 2.2.3 in \cite{MoerBar} for more details.

\begin{lemma}[\cite{MoerBar}, Lemma 2.3.2]
Every morphism in $\Omega$ can be factored in a unique way as a composition of elementary face maps followed by an isomorphism and followed by a composition of  elementary degeneracy maps. 
\end{lemma}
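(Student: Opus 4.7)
The plan is to mirror the classical epi-mono factorization in $\Delta$ while accounting for the nontrivial automorphisms of trees in $\Omega$. I exploit the operadic description of morphisms in $\Omega$: a morphism $\phi\colon S \to T$ sends each edge of $S$ to an edge of $T$ and each vertex $v$ of $S$ (viewed as a generating operation of $\Omega(S)$) to an operation in $\Omega(T)$, which is encoded by a subtree of $T$ together with a labelling of its leaves.

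For existence, I would proceed in two stages. First, identify the vertices of $S$ sent by $\phi$ to an identity operation; any such vertex is necessarily unary with its input sent to the same edge as its output, and $\phi$ then factors through the elementary degeneracy contracting that vertex. Iterating gives $\phi = \phi' \circ \sigma$ with $\sigma\colon S \to S'$ a composition of elementary degeneracies and $\phi'$ sending every vertex of $S'$ to a non-identity operation. Second, let $T' \subseteq T$ be the union of all subtrees $\phi'(v)$ as $v$ ranges over the vertices of $S'$, together with all edges in the edge-image of $\phi'$. By the second-stage hypothesis, the induced map $S' \to T'$ is a bijection on both edges and vertices compatible with the operadic structure, hence an isomorphism $\iota$ in $\Omega$. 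Finally, the inclusion $T' \hookrightarrow T$ is realised as a composition of elementary face maps by successively stripping from $T$ the inner edges, top vertices, and appropriate bottom vertices that do not belong to $T'$.

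For uniqueness, I would observe that both $T' \subseteq T$ and the set of vertices of $S$ sent to identities are intrinsic invariants of $\phi$. Indeed, $T'$ is the smallest subtree of $T$ containing every edge and every subtree in the image of $\phi$, while the degenerate vertices of $S$ are precisely those sent to identity operations. This pins down the source and target of the three pieces $\sigma$, $\iota$, $m$, and forces $\iota$ to be the bijection of operadic structures induced between $S'$ and $T'$. The particular compositions of elementary face maps realising $T' \hookrightarrow T$, and of elementary degeneracies realising $S \to S'$, may be reordered internally via the dendroidal identities, but the overall composition is unique.

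The main obstacle I anticipate lies in showing that no automorphism of the intermediate tree $S' \cong T'$ can be shifted across the boundary into $m$ or $\sigma$, i.e., that the factorization is unique as a morphism of $\Omega$ rather than merely up to reshuffling that straddles the three parts. This amounts to checking that elementary face and degeneracy maps do not admit symmetries which commute past them except via the dendroidal identities, and in particular that such identities never mix with the middle isomorphism. A secondary technical point is verifying that different admissible orders for peeling off the complement of $T'$ yield the same overall composition modulo the dendroidal identities, so that the existence argument produces a well-defined element for the uniqueness argument to constrain.
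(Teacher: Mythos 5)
First, a caveat: the paper offers no proof of this statement --- it is imported verbatim from \cite{MoerBar}, Lemma 2.3.2 --- so there is no in-paper argument to compare yours against. Judged on its own terms, your sketch follows the standard route (peel off degeneracies, identify an image subtree, realise its inclusion by elementary faces), but it contains one genuine error in the existence step.

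The error is the claim that, once every vertex mapping to an identity has been degenerated away, ``the induced map $S' \to T'$ is a bijection on both edges and vertices, hence an isomorphism.'' This is false, and the simplest counterexample is an elementary inner face map itself: for $\phi = \partial_e \colon \partial_e T \to T$ no vertex of $\partial_e T$ is sent to an identity (the merged vertex is sent to the composite operation $v \circ_e w$), yet the union of the operation-subtrees $\phi(v)$ is all of $T$, so with your definition $T' = T$, while $\partial_e T$ has one edge and one vertex fewer than $T$. What you are missing is that a vertex of $S'$ may be sent to a \emph{composite} (non-identity, non-elementary) operation of $\Omega(T)$, i.e.\ to a subtree with two or more vertices, and these vertices are exactly what the \emph{inner} face maps must account for. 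The correct continuation is: after stage one, $\phi'$ is injective on edges; let $T''$ be the tree whose edges are the $\phi'$-images of the edges of $S'$ and whose vertices are the operations $\phi'(v)$, so that $S' \to T''$ is an isomorphism; then $T'' \to T'$ is the composition of the inner face maps contracting, for each $v$ with $|\phi'(v)|\geq 2$, the inner edges of the subtree $\phi'(v)$; and finally $T' \to T$ is a composition of outer (top and root) face maps. Your uniqueness paragraph inherits the same omission --- the ``intrinsic invariant'' must record not just the subtree $T'$ but also which of its inner edges are skipped by the edge-image of $\phi$ --- and the automorphism-straddling issue you flag at the end is indeed the remaining substantive point, which your proposal raises honestly but does not settle.
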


\begin{definition}
A dendrex is called \emph{degenerate} if it is in an image of $\sigma_e^*$, where $\sigma_e$ is an elementary degeneracy map. A dendrex which is not degenerate is called \emph{non-degenerate}.
\end{definition}

\begin{lemma}[\cite{MoerBar}, Lemma 3.4.1] \label{fact_nondeg}
Let $X$ be a dendroidal set and $x\in X_T$ a dendrex of $X$, for some tree $T$. There is a unique composition of elementary degeneracy maps $\sigma\colon T\to S$ and a unique non-degenerate dendrex $x^{\#}\in X_S$ such that $x=\sigma^*(x^{\#})$. 
\end{lemma}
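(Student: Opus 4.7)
The plan is to prove existence and uniqueness separately, both by induction on the number of vertices $|T|$.

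\textbf{Existence.} If $x \in X_T$ is itself non-degenerate, set $\sigma = \id_T$ and $x^{\#} = x$. Otherwise there exist an elementary degeneracy $\sigma_e \colon T \to T'$ and $x' \in X_{T'}$ with $x = \sigma_e^{*}(x')$; since $|T'| = |T|-1$, the inductive hypothesis applied to $x'$ gives $x' = \tau'^{*}(x^{\#})$ for some composition $\tau'$ of elementary degeneracies and non-degenerate $x^{\#}$, and $\sigma := \tau' \circ \sigma_e$ furnishes the factorization.

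\textbf{Uniqueness.} Suppose $x = \sigma^{*}(y) = \tau^{*}(z)$ with $y, z$ non-degenerate. If $x$ is itself non-degenerate, then $\sigma$ and $\tau$ must both be the identity (otherwise the right-hand sides would be degenerate), and $y = z = x$. Otherwise, peel off first elementary degeneracies $\sigma_e$ from $\sigma$ and $\sigma_f$ from $\tau$, writing $\sigma = \sigma' \circ \sigma_e$ and $\tau = \tau' \circ \sigma_f$, and set $x_e = \sigma'^{*}(y) \in X_{T_e}$, $x_f = \tau'^{*}(z) \in X_{T_f}$, so that $x = \sigma_e^{*}(x_e) = \sigma_f^{*}(x_f)$. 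If $e = f$: every elementary degeneracy $\sigma_e$ admits a section in $\Omega$ (obtained by sending $e$ to one of its two lifts in $\sigma_e T$), so $\sigma_e^{*}$ is a split monomorphism, whence $x_e = x_f$, and the inductive hypothesis on $T_e$ gives $\sigma' = \tau'$ and $y = z$. If $e \neq f$: the two distinct unary vertices $u_e, u_f$ can be collapsed in either order, producing a commuting square of elementary degeneracies with common target $T_{ef}$. With compatible choices of sections, $x_e$ and $x_f$ descend to a common dendrex $\hat x \in X_{T_{ef}}$ satisfying $x_e = \sigma_{f|T_e}^{*}(\hat x)$ and $x_f = \sigma_{e|T_f}^{*}(\hat x)$; applying the inductive hypothesis to $\hat x$ on the strictly smaller tree $T_{ef}$ and chasing the resulting identifications yields $\sigma = \tau$ and $y = z$.

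\textbf{Main obstacle.} The critical technical point is the descent argument in the sub-case $e \neq f$ of uniqueness: one must verify that sections can be chosen compatibly so that the two pullbacks $\iota_{ef}^{*}(x_f)$ and $\iota_{fe}^{*}(x_e)$ yield the same dendrex $\hat x \in X_{T_{ef}}$, which reduces to checking that the natural embeddings $s_f \circ \iota_{ef}$ and $s_e \circ \iota_{fe}$ from $T_{ef}$ into $T$ can be made to coincide. This compatibility relies on the combinatorial structure of $\Omega$ — specifically, that the sections of two distinct elementary degeneracies of $T$ can be simultaneously chosen to land in the complement of both collapsed unary vertices — and makes the argument noticeably more involved than the classical simplicial Eilenberg–Zilber lemma, where the analogous compatibility is essentially automatic.
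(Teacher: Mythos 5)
The paper does not actually prove this lemma---it is quoted from \cite{MoerBar} (Lemma 3.4.1)---so there is no in-text proof to compare against; judged on its own, your argument is correct and is the standard dendroidal Eilenberg--Zilber argument, which is also how the cited reference proves it. Two remarks. First, the induction is sound: existence is immediate, the case $e=f$ uses that elementary degeneracies are split epimorphisms (their sections being the elementary faces adjacent to the inserted unary vertex), and the case $e\neq f$ reduces to smaller trees via the commuting square of degeneracies. Second, the ``main obstacle'' you flag is in fact no obstacle: no simultaneous or compatible choice of sections is needed. Picking any section $\delta_e\colon T_e\to T$ of $\sigma_e$, the composite $\sigma_f\delta_e\colon T_e\to T_f$ factors (by the unique face--iso--degeneracy factorization) as $\partial\circ\sigma$ with $\sigma\colon T_e\to T_{ef}$ the degeneracy collapsing $u_f$ and $\partial\colon T_{ef}\to T_f$ a face; hence $x_e=\sigma^{*}(\hat x)$ with $\hat x:=\partial^{*}(x_f)$. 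The symmetric computation gives $x_f=\tilde\sigma^{*}(\tilde\partial^{*}(x_e))$, and $\tilde\partial^{*}(x_e)=\tilde\partial^{*}\sigma^{*}(\hat x)=(\sigma\tilde\partial)^{*}(\hat x)=\hat x$ because $\tilde\partial$ is automatically a section of $\sigma$ (any elementary face adjacent to an elementary degeneracy splits it). So the two candidate descents coincide formally, exactly as in the simplicial case. (Minor imprecision: a section of $\sigma_e$ cannot ``land in the complement of both collapsed unary vertices''---it necessarily contains $u_f$ in its image; what you need, and what the factorization above delivers, is only that the composites $T_{ef}\to T_e\to T_f$ and the identity on $T_{ef}$ agree.) Finally, note that the uniqueness of $\sigma$ asserted in the lemma is uniqueness as a morphism of $\Omega$, not of its decomposition into elementary degeneracies, so the dendroidal identity $\sigma_f'\sigma_e=\sigma_e'\sigma_f$ is exactly what closes the argument.
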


\begin{definition}
Any elementary face map $\partial_f : \partial_f T \to T$ induces a map of representable dendroidal sets $\partial_f : \Omega[\partial_f T] \to \Omega[T]$. The union of all images of maps $\partial_f : \Omega[\partial_f T] \to \Omega[T]$ is denoted by $\partial \Omega[T]$. The inclusion $\partial \Omega[T] \to \Omega[T]$ is called a \emph{boundary inclusion}.
\end{definition}

\begin{definition}
A monomorphism $f\colon A\to B$ of dendroidal sets is called \emph{normal} if the action of the automorphism group $\Aut(T)$ on $B_T\setminus f(A_T)$ is free, for every tree $T$. 
We say that a dendroidal set $A$ is \emph{normal} if $\emptyset \to A$ is a normal monomorphism. 
\end{definition}

\begin{prop}[\cite{MoerBar}, Proposition 3.4.4] \label{boundary normal}
The class of all normal monomorphisms is the smallest class of monomorphisms closed under pushouts and transfinite compositions that contains all boundary inclusions $\partial\Omega[T] \to \Omega[T]$. 
\end{prop}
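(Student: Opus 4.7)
The plan is to prove two containments. First I would verify that each boundary inclusion $\partial\Omega[T] \hookrightarrow \Omega[T]$ is normal and that the class of normal monomorphisms is closed under pushouts and transfinite compositions. Second, I would show that every normal monomorphism lies in the class generated by boundary inclusions under these two operations, by means of a skeletal filtration.

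For the first containment, I begin with boundary inclusions. By the factorization lemma, any morphism $\phi\colon S \to T$ in $\Omega$ decomposes uniquely as $\phi = \partial \circ \alpha \circ \sigma$ with $\partial$ a composition of face maps, $\alpha$ an isomorphism, and $\sigma$ a composition of degeneracies; such a $\phi$ lies in $\partial\Omega[T]_S$ precisely when $\partial$ is nontrivial. Hence $\Omega[T]_S \setminus \partial\Omega[T]_S$ consists exactly of morphisms of the form $\alpha \circ \sigma$, and the action of $\Aut(S)$ on this set by precomposition is free: if $\phi \circ g = \phi$ for $g \in \Aut(S)$, uniqueness of the factorization forces $\sigma \circ g = \sigma$, and by Lemma \ref{fact_nondeg} applied to $\id$ viewed as a degenerate dendrex, this in turn forces $g = \id$. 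Closure under pushouts is then straightforward because dendroidal sets are a presheaf category: for a pushout of $i\colon A \to B$ along $A \to A'$, the set $B'_T \setminus i'(A'_T)$ is in natural $\Aut(T)$-equivariant bijection with $B_T \setminus i(A_T)$, so freeness is inherited. Closure under transfinite composition follows similarly from the fact that filtered colimits in $\dSet$ are computed levelwise.

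For the second containment, I would use the skeletal filtration. Given a normal monomorphism $f\colon A \to B$, let $\mathrm{Sk}_{-1} = f(A)$ and, for $n \geq 0$, let $\mathrm{Sk}_n \subseteq B$ be the sub-dendroidal set generated by $f(A)$ together with all dendrices $b \in B_T$ with $|T| \leq n$. These form an increasing filtration whose colimit is $B$, and for each $n \geq 0$ I would exhibit the inclusion $\mathrm{Sk}_{n-1} \hookrightarrow \mathrm{Sk}_n$ as a pushout
\[
\xymatrix{
\coprod_{[b]} \partial\Omega[T_b] \ar[r] \ar[d] & \mathrm{Sk}_{n-1} \ar[d] \\
\coprod_{[b]} \Omega[T_b] \ar[r] & \mathrm{Sk}_n
}
\]
where the coproduct is indexed by $\Aut(T_b)$-orbits $[b]$ of non-degenerate dendrices in $B_{T_b} \setminus f(A_{T_b})$ with $|T_b| = n$, and a single representative $b$ is chosen per orbit. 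Normality enters exactly here: it guarantees that every representative has trivial stabilizer in $\Aut(T_b)$, so distinct orbit representatives contribute disjoint $\Aut(T_b)$-translates and no collapsing occurs. Assembling the transfinite composition of these pushouts then expresses $f$ itself as a transfinite composition of pushouts of boundary inclusions, as required.

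The main obstacle will be verifying the pushout square at each step. Concretely, one must check, for every test tree $S$, that the induced diagram of sets is a pushout, which requires separating degenerate from non-degenerate $S$-dendrices via Lemma \ref{fact_nondeg} and tracking $\Aut$-actions carefully: the non-degenerate $S$-dendrices of $\mathrm{Sk}_n \setminus \mathrm{Sk}_{n-1}$ are exactly the $\Aut(T_b)$-translates of the chosen representatives with $|T_b| = |S| = n$, while degenerate $S$-dendrices correspond via the unique non-degenerate representative to dendrices already accounted for in $\mathrm{Sk}_{n-1}$. The structural fact that makes the gluing data well-defined is that every proper face of a $T$-dendrex with $|T| = n$ involves a tree with strictly fewer vertices, so the restriction of $\Omega[T_b] \to B$ to $\partial\Omega[T_b]$ automatically factors through $\mathrm{Sk}_{n-1}$.
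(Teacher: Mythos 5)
The paper gives no proof of this proposition: it is imported verbatim from \cite{MoerBar} (Proposition 3.4.4), so there is no internal argument to compare against. Your two-containment strategy is exactly the standard proof from that reference, and it is correct in structure: one direction by checking that boundary inclusions are normal and that normality of monomorphisms passes to pushouts and transfinite compositions (both computed levelwise in a presheaf category), the other by writing a normal monomorphism as the transfinite composition of its relative skeletal filtration, with normality supplying precisely the freeness needed to attach each orbit of non-degenerate dendrices along a full copy of $\Omega[T]$ rather than a quotient $\Omega[T]/G$ by a stabilizer. You also correctly isolate the two structural inputs: Lemma \ref{fact_nondeg} for the degeneracy bookkeeping, and the fact that every proper face lowers the number of vertices, which makes the attaching maps land in $\mathrm{Sk}_{n-1}$.

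Two justifications need repair. First, the step ``$\sigma \circ g = \sigma$ forces $g = \id$'' does not follow from Lemma \ref{fact_nondeg} as you invoke it: applying that lemma to the dendrex $\sigma \in \Omega[S']_S$ merely returns the factorization $\sigma = \sigma^*(\id_{S'})$ you started from, which is circular. The correct argument is combinatorial: if $\sigma g = \sigma$ then $g$ preserves each fibre of $\sigma$ on edges; every fibre is a linear chain of copies of an edge joined by unary vertices, and a tree automorphism preserves the root-ward order on such a chain, so $g$ fixes every edge and is therefore the identity. Second, your levelwise verification of the pushout square only treats test trees $S$ with $|S| = n$; for $|S| > n$ the new dendrices of $\mathrm{Sk}_n \setminus \mathrm{Sk}_{n-1}$ are all degenerate, and by Lemma \ref{fact_nondeg} their unique non-degenerate roots lie among the freshly attached $n$-vertex cells, \emph{not} in $\mathrm{Sk}_{n-1}$ as your last sentence suggests; it is the resulting bijection between these degenerate dendrices and the morphisms $S \to T_b$ of the form (iso)$\circ$(nontrivial degeneracy) that completes the check. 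With these two points corrected the argument is complete.
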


Using the definition of a normal monomorphism it is easy to see that if $f \colon A\to B$ is any morphism of dendroidal sets and $B$ is normal, then $A$ is also normal. If $f$ is a monomorphism and $B$ is normal, then $f$ is a normal monomorphism. 

\begin{definition}
For an elementary face map $\partial_f : \partial_f T \to T$ we denote by $\Lambda^f[T]$ the union of images of all elementary face maps $\partial_g : \Omega[\partial_g T] \to \Omega[T], g\neq f$. 

The inclusion $\Lambda^f[T] \to \Omega[T]$ is called a \emph{horn inclusion}. A horn inclusion is called \emph{inner} (respectively, \emph{top} or \emph{bottom}) if $\partial_f$ is an inner (resp. top or bottom) elementary face map. 
\end{definition}

\begin{definition}[\cite{BN}] \label{deffullyKan}
A dendroidal set $X$ is called \emph{fully Kan} if the induced map
\[
\Hom(\Omega[T], X) \to \Hom(\Lambda^f[T], X)
\]
is a surjection for every horn inclusion $\Lambda^f [T] \to \Omega[T]$.
\end{definition} 

\begin{thm}[\cite{BN}] \label{thm_equivalence}
There is a combinatorial left proper model structure on dendroidal sets, called the stable model structure,  for which the cofibrations are normal monomorphisms and fibrant objects are fully Kan dendroidal sets. 

The stable model structure is Quillen equivalent to the group-completion model structure on $E_\infty$-spaces. In particular, the underlying $\infty$-category of fully Kan dendroidal sets is equivalent to the $\infty$-category of connective spectra. 
\end{thm}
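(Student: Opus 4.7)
The plan is to first build the stable model structure on $\dSet$ using Cisinski's theory of model structures on presheaf categories. We take the class of cofibrations to be the normal monomorphisms, which by Proposition~\ref{boundary normal} is generated (as a weakly saturated class) by the set of boundary inclusions $\partial\Omega[T]\to \Omega[T]$. For the class of trivial cofibrations we take the smallest weakly saturated class containing all horn inclusions $\Lambda^f[T]\to \Omega[T]$ (for every elementary face map $\partial_f$, including top and bottom horns, since the latter are exactly what encode invertibility/group-completion). Cisinski's recognition theorem then produces a cofibrantly generated model structure whose fibrant objects are precisely the fully Kan dendroidal sets, as in Definition~\ref{deffullyKan}. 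Left properness follows from cofibrations being monomorphisms in a topos; combinatoriality is automatic since we are in a presheaf category with a small set of generators.

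The harder half is the Quillen equivalence with the group-completion model structure on $E_\infty$-spaces. The strategy is to leverage the existing homotopy theory of dendroidal sets, notably Heuts' comparison between dendroidal sets and $E_\infty$-spaces \cite{Heuts2}. Conceptually, a dendroidal set is a combinatorial model for a coloured symmetric operad up to homotopy, and the fully Kan condition forces invertibility of all compositions; hence such objects should model grouplike $E_\infty$-spaces, which by May's recognition principle is the $\infty$-category of connective spectra.

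Concretely, I would construct a Quillen adjunction by sending a representable $\Omega[T]$ to the free grouplike $E_\infty$-space on its set of leaves (functorially in $T$) and extending by colimits, with right adjoint a nerve-type construction sending an $E_\infty$-space $X$ to the dendroidal set whose $T$-dendrices are $T$-shaped operations on the underlying object. To show this is a Quillen equivalence, I would verify that the left adjoint sends generating cofibrations (boundary inclusions) and generating trivial cofibrations (horn inclusions) to (trivial) cofibrations, and then check that the derived unit and counit are equivalences. For the unit one uses that $\eta$, the trivial tree, is sent to the sphere spectrum; for the counit one checks on free $E_\infty$-spaces. Alternatively, one compares homotopy categories by identifying the homotopy groups of a fully Kan dendroidal set with those of the associated spectrum, using the stable homotopy category as target.

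The main obstacle is the identification of the underlying $\infty$-category of fully Kan dendroidal sets with connective spectra. One has to argue that operadic composition in a fully Kan dendroidal set assembles into a coherently commutative multiplication, that invertibility (group-completion) arises precisely from the bottom horn extensions, and control the passage through auxiliary model structures. This last part is genuinely subtle: the operadic, covariant and stable model structures on $\dSet$ must be compared carefully, and one has to show that the localization from the operadic to the stable model structure corresponds on $E_\infty$-spaces exactly to group-completion. This is the content of the main theorems of \cite{BN}, building on \cite{Heuts2}, and is not something one can realistically reconstruct in a short sketch.
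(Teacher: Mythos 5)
This statement is imported from \cite{BN} and the paper gives no proof of it at all; it is used as a black box, so there is no internal argument to compare your sketch against. Your outline is broadly consistent with the strategy of \cite{BN} (a localized model structure on $\dSet$ whose fibrant objects are the fully Kan dendroidal sets, compared to grouplike $E_\infty$-spaces via Heuts' work \cite{Heuts2}), and you are right that the genuinely hard content is the chain of comparisons through the operadic and covariant model structures, which you correctly defer to the references.

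That said, two steps of your construction are genuinely wrong as stated. First, you propose to \emph{define} the trivial cofibrations as the smallest weakly saturated class containing the horn inclusions and to let Cisinski's recognition theorem do the rest. But the paper's own remark immediately after Theorem \ref{thm_equivalence} points out that it is \emph{not known} whether the horn inclusions generate the trivial cofibrations of the stable model structure; what is true is only that fibrant objects and fibrations between fibrant objects are detected by lifting against horns. So your proposed generating set cannot be taken as the definition of the model structure; in \cite{BN} the stable model structure is instead obtained as a left Bousfield localization of an existing model structure (with an abstractly given set of generating trivial cofibrations), and the characterization of fibrant objects via Definition \ref{deffullyKan} is a theorem, not a definition. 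Second, left properness does not follow from ``cofibrations are monomorphisms in a topos'': the cofibrations here are the normal monomorphisms, not all objects are cofibrant, and left properness is a condition on pushouts of weak equivalences along cofibrations. It holds here because one localizes a left proper combinatorial model structure, and left Bousfield localization preserves left properness. With those two corrections your sketch matches the intended architecture, but as written both the existence argument and the properness argument have real gaps.
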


This theorem in particular implies that for every dendroidal set $D$ there is an associated connective spectrum which we denote by $\calK(D)$. The assignment $D \mapsto \calK(D)$ has been investigated in \cite{NikKtheory}. One of the 
main results there is that this functor generalizes $K$-theory of symmetric monoidal categories.

\begin{remark}
By Proposition \ref{boundary normal}, the generating cofibrations are given by boundary inclusions $\partial \Omega[T]\to \Omega[T]$. Horn inclusions $\Lambda^f[T]\to \Omega[T]$ are trivial cofibrations, 
but it is not known whether  the set of horn inclusions is the set of generating trivial cofibrations. On the other hand, fibrant objects and fibrations between fibrant objects are characterized by the right lifting property
 with respect to all horn inclusions, cf. Theorem 4.6 in \cite{BN} and Proposition 5.4.3 and Proposition 5.4.5 in \cite{basic-thesis} . 
\end{remark}

\begin{lemma}\label{sep18}
Let $M$ be a model category and $F: \dSet \to M$ a left adjoint functor. Then $F$ is a left Quillen functor with respect to the stable model structure  if and only if $F$ sends
boundary inclusions to cofibrations and horn inclusions to trivial cofibrations in $M$.  
\end{lemma}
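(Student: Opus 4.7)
The forward direction is immediate: any left Quillen functor preserves cofibrations and trivial cofibrations in general, and by Proposition \ref{boundary normal} together with Theorem \ref{thm_equivalence}, boundary inclusions are (in fact generating) cofibrations and horn inclusions are trivial cofibrations in the stable model structure.

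For the backward direction the cofibration case is a direct saturation argument. Let $\mathcal{C}$ be the class of maps $f$ in $\dSet$ such that $Ff$ is a cofibration in $M$. Since $F$ is a left adjoint it preserves all colimits, and cofibrations in $M$ are closed under pushouts, transfinite compositions and retracts, so $\mathcal{C}$ inherits the same closure properties. By hypothesis $\mathcal{C}$ contains every boundary inclusion $\partial\Omega[T] \to \Omega[T]$, and Proposition \ref{boundary normal} then forces $\mathcal{C}$ to contain every normal monomorphism, i.e.\ every cofibration in the stable model structure.

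The trivial cofibration case is more subtle. The analogous class $\mathcal{T}$ of maps whose $F$-image is a trivial cofibration enjoys the same closure properties and contains every horn inclusion by assumption, but as pointed out in the remark preceding this lemma, horn inclusions are not known to form a set of generating trivial cofibrations, so a pure saturation argument is unavailable. I would therefore work through the right adjoint $G$ of $F$. First, if $Y$ is fibrant in $M$ then $GY$ is fully Kan: by adjunction, any lifting problem of a horn $\Lambda^f[T] \to GY$ against $\Omega[T]$ translates into a lifting problem of $F(\Lambda^f[T]) \to Y$ against $F(\Omega[T])$, which is solvable because $F$ sends horn inclusions to trivial cofibrations in $M$ and $Y \to \ast$ is a fibration. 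The same adjunction argument shows that if $p\colon X \to Y$ is a fibration between fibrant objects in $M$, then $Gp$ is a map between fully Kan dendroidal sets having the right lifting property against every horn inclusion, and so by the characterization recalled in the remark after Theorem \ref{thm_equivalence}, $Gp$ is a fibration in $\dSet$.

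The main obstacle is then to upgrade this partial information on $G$ to the statement that $F$ preserves all trivial cofibrations. Concretely, given a trivial cofibration $f\colon A \to B$ in $\dSet$ and any fibration $p\colon X \to Y$ in $M$, one wants to solve an arbitrary lifting problem of $p$ against $Ff$; the plan is to fibrantly replace $X$ and $Y$ in $M$ (using left properness, available via Theorem \ref{thm_equivalence}) to reduce $p$ to a fibration between fibrant objects, transport the reduced lifting problem through the adjunction to a lifting problem for $f$ against a fibration $Gp'$ in $\dSet$ between fully Kan dendroidal sets, and solve it there using that $f$ is a trivial cofibration. This final reduction — bridging from preservation of fibrations between fibrant objects by $G$ to preservation of \emph{all} trivial cofibrations by $F$, without access to a generating set of trivial cofibrations in $\dSet$ — is where the real work lies.
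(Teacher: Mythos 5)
Your forward direction and the cofibration half of the converse match the paper exactly, and your analysis of $G$ (that it sends fibrant objects of $M$ to fully Kan dendroidal sets and fibrations between fibrant objects to fibrations, via adjunction and the characterization by lifting against horn inclusions) is precisely the input the paper uses. But the proof is not finished: the step you defer as ``where the real work lies'' is a genuine gap, and it is exactly the point the paper closes with a citation. The missing ingredient is the general model-category fact that a cofibration is a weak equivalence if and only if it has the left lifting property against all fibrations \emph{between fibrant objects} (the paper invokes \cite[Lemma A.6.1]{JoyalTierney}; versions also appear in Hirschhorn and Dugger). Granting that fact, the converse is immediate: for a trivial cofibration $f$ in $\dSet$ and a fibration $p$ between fibrant objects in $M$, the adjunction turns the lifting problem for $Ff$ against $p$ into one for $f$ against $Gp$, which is solvable because $Gp$ is a fibration; since $Ff$ is already known to be a cofibration, the cited fact shows $Ff$ is a trivial cofibration. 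No generating set of trivial cofibrations in $\dSet$ is needed.

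Your sketched workaround --- fibrantly replacing $X$ and $Y$ to reduce an arbitrary fibration $p\colon X \to Y$ to one between fibrant objects --- is essentially an attempt to reprove that general lemma inline, but as stated it does not go through: replacing $X$ and $Y$ changes the lifting problem, and recovering a lift for the original $p$ from a lift for the replaced fibration requires a retract/factorization argument that you have not supplied. Moreover, the appeal to left properness is misplaced: Theorem \ref{thm_equivalence} gives left properness of the stable model structure on $\dSet$, whereas your replacement must happen in $M$, about which no properness hypothesis is available (and none is needed for the Joyal--Tierney lemma). Either cite the characterization of trivial cofibrations by lifting against fibrations between fibrant objects, or prove it; as written, the argument stops one lemma short.
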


\begin{proof}
If $F$ is left Quillen, then $F$ clearly sends
boundary inclusions to cofibrations and horn inclusions to trivial cofibrations in $M$. 

To prove the converse, let us assume that $F$ sends boundary inclusions to cofibrations and horn inclusions to trivial cofibrations in $M$. Since cofibrations in the stable model structure are generated as a saturated class by boundary inclusions, it follows that $F$ preserves cofibrations. 
Let $G$ be the right adjoint of $F$. It is a well-known fact about model categories that trivial cofibrations are characterized by the lifting property against fibrations between fibrant objects, see e.g. \cite[Lemma A.6.1]{JoyalTierney}. By this fact and adjunction it follows that $F$ preserves trivial cofibrations if $G$ preserves fibrations between fibrant objects.
We now use that fibrant objects and trivial fibrations between fibrant objects in $\dSet$  can be characterized by the lifting property against horn inclusions. Thus  another application of the adjunction property proves the claim. \end{proof}

\begin{lemma} \label{LemmaEq}
Let $M$ be a model category and let $F, G \colon \dSet \to M$ be left adjoint functors that send normal monomorphisms to cofibrations. If there is a natural transformation $\alpha \colon F \to G$ such that $\alpha_{\Omega[T]} \colon F(\Omega[T]) \to G(\Omega[T])$ is a weak equivalence for every tree $T$, then $\alpha_X\colon F(X) \to G(X)$ is a weak equivalence for every normal dendroidal set $X$. 
\end{lemma}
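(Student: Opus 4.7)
My plan is to argue by cell induction, using that every normal dendroidal set admits a natural cellular presentation built from boundary inclusions. The workhorse is the gluing lemma for cofibrations between cofibrant objects, which holds in any model category. Since $F$ and $G$ are left adjoints carrying normal monomorphisms to cofibrations, for any normal $Y$ the objects $F(Y)$ and $G(Y)$ are cofibrant in $M$, and any cellular filtration of $Y$ is transported to a filtration in $M$ by cofibrations between cofibrant objects.

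I would prove by induction on $n \geq 0$ the following auxiliary claim: $\alpha_Y$ is a weak equivalence for every normal dendroidal set $Y$ whose non-degenerate dendrices all have trees with at most $n$ vertices. For $n = 0$, $Y$ is a coproduct of copies of $\eta$, and $\alpha_Y$ is a coproduct of copies of $\alpha_\eta$; this is a weak equivalence. For the inductive step, $Y$ is obtained from its $(n-1)$-skeleton $Y^{(n-1)}$ via a single pushout along
\[
\coprod_i \partial\Omega[T_i] \;\longrightarrow\; \coprod_i \Omega[T_i],
\]
indexed by $\Aut(T_i)$-orbits of non-degenerate $T_i$-dendrices with $|T_i|=n$ (freeness of the action is guaranteed by normality). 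By the inductive hypothesis $\alpha_{Y^{(n-1)}}$ is a weak equivalence; by hypothesis of the lemma each $\alpha_{\Omega[T_i]}$ is a weak equivalence; and since $\partial\Omega[T_i]$ is itself $(n-1)$-bounded (the tree $T_i$ is the unique top-dimensional non-degenerate dendrex of $\Omega[T_i]$ and does not lie in the boundary), the inductive hypothesis applies a second time to give each $\alpha_{\partial\Omega[T_i]}$ a weak equivalence. The gluing lemma then delivers $\alpha_Y$.

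A general normal $X$ is the sequential colimit $X = \colim_n X^{(n)}$ along cofibrations between cofibrant objects in $\dSet$, and applying $F$ or $G$ yields analogous sequential colimits in $M$. The auxiliary claim gives $\alpha_{X^{(n)}}$ a weak equivalence for every $n$, and the standard fact that weak equivalences are preserved by sequential colimits along cofibrations between cofibrant objects finishes the argument. The conceptual heart of the proof — and what I would flag as the main step to get right — is the observation that $\partial \Omega[T]$ is itself strictly $(|T|-1)$-bounded; this is what allows the induction to close on itself without a separate sub-induction over the faces of $T$. Once that is in place, the remaining verifications (the cellular pushout presentation of $Y^{(n)}$ from $Y^{(n-1)}$, the freeness of the automorphism action on non-degenerate dendrices, and the two standard model-categorical preservation lemmas) are routine.
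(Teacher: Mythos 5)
Your proof is correct and follows essentially the same route as the paper: induction on the dimension of the normal dendroidal set, the base case of coproducts of $\eta$, the pushout of the $n$-skeleton over $\coprod \partial\Omega[T] \to \coprod \Omega[T]$ handled by the gluing lemma (using that $\partial\Omega[T]$ is $(n-1)$-dimensional so the inductive hypothesis applies to it), and finally the skeletal filtration as a sequential colimit along cofibrations between cofibrant objects. No substantive differences.
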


\begin{proof}
For a non-negative integer $n$, we say that a dendroidal set $X$ is $n$-\emph{dimensional} if $X$ has no non-degenerate dendrex of shape $T$ for $|T|>n$. 

We prove by induction on $n$ that if $X$ is a normal $n$-dimensional dendroidal set, then $F(X) \to G(X)$ is a weak equivalence. If $X$ is $0$-dimensional, then $X$ is just a coproduct of copies of $\eta$. By the assumption, $F(\eta) \to G(\eta)$ is a weak equivalence, so $F(X) \to G(X)$ is a weak equivalence since it is a coproduct of weak equivalences between cofibrant objects. 

For the inductive step, assume $X$ is an $n$-dimensional normal dendroidal set and let $X'$ be its $(n-1)$-skeleton. Then $X = X' \cup_{\coprod \partial\Omega[T]} \coprod \Omega[T]$, where the coproduct varies over all isomorphism classes of non-degenerate dendrices in $X_T$ with $|T|=n$.

Since $F$ and $G$ are left adjoints they preserve colimits, so there is a commutative diagram 
\[
\xymatrix{ \coprod F(\partial\Omega[T]) \ar[rr] \ar[rd]^\sim \ar[dd] &  & F(X')  \ar[dd] \ar[rd]^\sim & \\ 
 &\coprod G(\partial\Omega[T])  \ar[rr] \ar[dd] & & G(X') \ar[dd] \\
\coprod F(\Omega[T])  \ar[rr] \ar[rd]^\sim & & F(X)  \ar[rd]& \\
& \coprod G(\Omega[T]) \ar[rr] & &  G(X)
}
\]
where all the objects are cofibrant, the back and front sides are pushout squares and the two vertical maps on the left are cofibrations. The two maps in the upper square are weak equivalences by the inductive hypothesis. The map $\coprod F(\Omega[T]) \to \coprod G(\Omega[T])$ is a weak equivalence by the assumption. Hence $F(X) \to G(X)$ is also a weak equivalence. 

Finally, for a normal dendroidal set $X$, consider the skeletal filtration of $X$: 
\[
\emptyset=X^{(-1)}\subseteq X^{(0)} \subseteq X^{(1)} \to \ldots \to X^{(n)} \subseteq \ldots  
\]

Since $X^{(n)}$ is $n$-dimensional, we have shown already that $F(X^{(n)}) \to G(X^{(n)})$ is a weak equivalence between cofibrant objects. Since $F$ (resp. $G$) preserves colimits, $F(X)$ (resp. $G(X)$) is a filtered colimit of $F(X^{(n)})$ (resp. $G(X^{(n)})$) and hence $F(X) \to G(X)$ is a weak equivalence, too. 
\end{proof}

\begin{corollary}\label{15sep2}
Let $F,G: \dSet \to M$ be two left Quillen functors and $\varphi: F \Rightarrow G$ a natural transformation such that $\varphi_\eta: F(\eta) \to G(\eta)$ is an equivalence. Then 
$\varphi_X: F(X) \to G(X)$ is an equivalence for any normal dendroidal set $X$. In particular, $F$ and $G$ induce equivalent functors on homotopy categories. 
\end{corollary}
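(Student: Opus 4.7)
My approach is to reduce the statement to Lemma \ref{LemmaEq}. Both $F$ and $G$ are left Quillen with respect to the stable model structure on $\dSet$, so they send cofibrations to cofibrations; by Theorem \ref{thm_equivalence} the cofibrations in this structure are precisely the normal monomorphisms, and hence the hypothesis of Lemma \ref{LemmaEq} on normal monomorphisms is automatic. It therefore suffices to show that $\varphi_{\Omega[T]}\colon F(\Omega[T])\to G(\Omega[T])$ is a weak equivalence for \emph{every} tree $T$, not only for $T=\eta$. Once this is verified, Lemma \ref{LemmaEq} gives the first statement, and the claim about homotopy categories follows by taking a normal cofibrant replacement of an arbitrary dendroidal set.

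I would prove the key fact by induction on the number of vertices $|T|$. For $|T|=0$ the tree is $\eta$ and the claim is the hypothesis. For the corolla case $|T|=1$, $T=C_n$, I would exploit the fact that the top horn inclusion $\Lambda^v[C_n]\hookrightarrow \Omega[C_n]$ at the unique vertex $v$ is a trivial cofibration in the stable model structure. A direct inspection of the faces of the corolla identifies $\Lambda^v[C_n]$ with $\coprod_{\ell(C_n)}\eta$, with the empty coproduct $\emptyset$ covering the stump case $n=0$. Since $F$ and $G$ send this trivial cofibration to a weak equivalence, and $\varphi$ on a finite coproduct of copies of $\eta$ is the corresponding finite coproduct of the equivalences $\varphi_\eta$ between cofibrant objects (and therefore still an equivalence), the two-out-of-three property applied to the naturality square of $\varphi$ gives that $\varphi_{\Omega[C_n]}$ is a weak equivalence.

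For the inductive step $|T|\ge 2$ I would pick any top vertex $w$ of $T$, say with $n$ inputs, and invoke the standard pushout decomposition
\[
\Omega[T] \;\cong\; \Omega[\partial_w T]\cup_\eta \Omega[C_n],
\]
where $\eta\to \Omega[C_n]$ selects the root of the corolla and $\eta\to \Omega[\partial_w T]$ selects the edge of $\partial_w T$ formerly carrying $w$. Both legs are normal monomorphisms between normal dendroidal sets, so the square is a homotopy pushout preserved by any left Quillen functor. The inductive hypothesis applies to each of the three smaller corners $\Omega[\partial_w T]$, $\Omega[C_n]$ and $\eta$, so $\varphi$ is a weak equivalence there and hence also on the pushout $\Omega[T]$. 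The most delicate point in this plan is handling the stump case $T=C_0$ inside the corolla argument, where the top horn inclusion degenerates to $\emptyset \hookrightarrow \Omega[C_0]$, which must still be recognised as a stable trivial cofibration; after that, the rest of the proof is a clean combination of two-out-of-three and the grafting pushout for trees.
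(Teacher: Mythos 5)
Your overall strategy---reduce to Lemma \ref{LemmaEq} and verify that $\varphi_{\Omega[T]}$ is a weak equivalence for every tree $T$---is exactly the paper's, and your treatment of corollas (including the stump $C_0$, where the top horn is $\emptyset$) is correct. However, the inductive step contains a genuine error: the square you write down is \emph{not} a pushout. The pushout of the span $\Omega[\partial_w T] \leftarrow \eta \to \Omega[C_n]$ is the union $\Omega[\partial_w T] \cup_\eta \Omega[C_n]$ inside $\Omega[T]$, and this is a \emph{proper} subobject: the identity dendrex $\mathrm{id}_T \in \Omega[T]_T$ factors neither through $\partial_w T$ nor through the corolla at $w$, so it does not lie in the union. (Under $i_!$, your claim for $T=L_2$ would say that $\Delta^2$ equals its spine $\Delta^1 \cup_{\Delta^0} \Delta^1$, which is false.) Consequently the step ``the square is a homotopy pushout preserved by any left Quillen functor'' does not compute $F(\Omega[T])$.

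The argument is repairable: the inclusion $\Omega[\partial_w T] \cup_\eta \Omega[C_n] \to \Omega[T]$ is inner anodyne (the grafting/Segal-core lemma of Cisinski and Moerdijk), hence a stable trivial cofibration, and then two-out-of-three in the naturality square finishes the induction exactly as in your corolla case. But that is a nontrivial input which you neither prove nor cite, and it is of the same order of difficulty as the fact the paper uses instead, namely that the leaf inclusion $\bigsqcup_{\ell(T)}\eta \to \Omega[T]$ is a stable trivial cofibration for every tree $T$; with that single fact the representable case follows in one line, with no induction and no case analysis. As written, your proof has a gap at its central step.
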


\begin{proof} For any tree $T$, the inclusion of leaves 
\[ \bigsqcup_{\ell(T)} \eta\to \Omega[T]\]
is a stable trivial cofibration.
Since $F$ and $G$ are left Quillen, $F(\Omega[T])\to G(\Omega[T])$ is a weak equivalence, too. The result follows from \ref{LemmaEq}.
\end{proof} 

The last corollary gives an easy criterion to check that two left Quillen functors are equivalent once we are given a natural transformation between the two. We will need later a stronger version of that result 
where we can drop the assumption that we are already given a natural transformation. To prove this stronger result we have to rely on results of \cite{GGN} which are obtained in the setting of $\infty$-categories. 
Thus we will also state the result in the setting of $\infty$-categories. But note that a left Quillen functor between model categories gives rise to a left adjoint functor between $\infty$-categories.	

\begin{proposition}\label{prop_cool}
Let $F,G: \dSet_\infty \to \calC$ be two left adjoint functors of $\infty$-categories, where $\dSet_\infty$ denotes the $\infty$-category underlying the stable model structure on dendroidal sets.
Assume that $\calC$ is presentable and additive. The latter means that the homotopy category $\mathrm{Ho}(\calC)$ is additive. If $F(\eta) \simeq G(\eta)$ in $\calC$ then the functors $F$ and $G$ are equivalent. In particular, for every dendroidal set 
$X$ there is an equivalence $F(X) \simeq G(X)$ in $\calC$.
\end{proposition}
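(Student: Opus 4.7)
The plan is to use Theorem~\ref{thm_equivalence} to transfer the question about $\dSet_\infty$ to a question about connective spectra, and then invoke the universal property of connective spectra from \cite{GGN}. The point is that once $\dSet_\infty$ is replaced by $\Sp^{\geq 0}$, a colimit-preserving functor out of $\Sp^{\geq 0}$ into a presentable additive $\infty$-category is determined, up to equivalence, by its value on the sphere spectrum.

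More precisely, Theorem~\ref{thm_equivalence} yields an equivalence of $\infty$-categories $\calK: \dSet_\infty \xrightarrow{\;\sim\;} \Sp^{\geq 0}$, which sends $\eta$ to the sphere spectrum $\mathbb{S}$ (since $\eta = i_!\Delta[0]$ and $\calK(i_! Y)\simeq \Sigma^\infty_+ Y$). Under this equivalence, $F$ and $G$ correspond to left adjoint functors $\widetilde F,\widetilde G : \Sp^{\geq 0}\to\calC$ with $\widetilde F(\mathbb{S})\simeq F(\eta)\simeq G(\eta)\simeq \widetilde G(\mathbb{S})$. So it suffices to prove that any two left adjoints $\Sp^{\geq 0}\to\calC$ into a presentable additive $\calC$ agreeing on $\mathbb{S}$ are equivalent.

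This is where I would invoke \cite{GGN}. They prove that $\Sp^{\geq 0}$ is the free presentable additive $\infty$-category on one generator $\mathbb{S}$; equivalently, for a presentable additive $\infty$-category $\calC$, evaluation at $\mathbb{S}$ induces an equivalence
\[
\mathrm{Fun}^L(\Sp^{\geq 0},\calC)\;\xrightarrow{\;\sim\;}\;\calC,\qquad H\mapsto H(\mathbb{S}).
\]
Since equivalences of $\infty$-categories reflect equivalences of objects, the equivalence $\widetilde F(\mathbb{S})\simeq\widetilde G(\mathbb{S})$ in $\calC$ lifts to an equivalence $\widetilde F\simeq \widetilde G$ in $\mathrm{Fun}^L(\Sp^{\geq 0},\calC)$. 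Transporting back along $\calK$ yields $F\simeq G$, and in particular $F(X)\simeq G(X)$ for every dendroidal set $X$.

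The main content is therefore the universal property of $\Sp^{\geq 0}$ from \cite{GGN}; the only thing to verify by hand is that $\calK(\eta)\simeq \mathbb{S}$, which is immediate since $\eta\simeq i_!(*)$ and $\calK\circ i_!\simeq \Sigma^\infty_+$ by the cited result from \cite{NikKtheory}. No naturality data on $F$ and $G$ needs to be constructed directly; the universal property produces the required natural equivalence automatically, which is precisely the strengthening over Corollary~\ref{15sep2} that the statement requires.
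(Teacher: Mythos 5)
Your proof is correct and follows essentially the same route as the paper's: transfer along the equivalence $\dSet_\infty \simeq \Sp^{\geq 0}$ from Theorem \ref{thm_equivalence} (under which $\eta$ goes to the sphere), then apply the universal property of connective spectra as the free presentable additive $\infty$-category on one generator from \cite{GGN}. Your explicit formulation via the equivalence $\mathrm{Fun}^L(\Sp^{\geq 0},\calC)\simeq\calC$ and the remark that equivalences reflect equivalences of objects just spells out the step the paper leaves implicit.
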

\begin{proof}
We first use that the $\infty$-category $\dSet_\infty$ is equivalent to the $\infty$-category $\Sp^{\geq 0}$ of connective spectra as a result of the equivalence mentioned in Theorem \ref{thm_equivalence}. It is shown in \cite{BN} that under this equivalence the dendroidal set $\eta$ is sent to the sphere. 
Then we use Corollary 4.9. in \cite{GGN}, which states that connective spectra form the `free additive' $\infty$-category on one generator (which is the sphere). This implies that two left adjoint $\infty$-functors $F,G\colon \Sp^{\geq 0} \to \calC$ from the $\infty$-category of connective spectra to an additive $\infty$-category $\calC$ are equivalent if they coincide on the sphere. Thus $\dSet_\infty$ satisfies the same universal property which proves the statement.
\end{proof}

\begin{corollary}
Let $F,G: \dSet \to M$ be two left Quillen functors where $M$ is an additive combinatorial model category. If there exists an equivalence $F(\eta) \to G(\eta)$ in $\mathrm{Ho}(M)$ then  there is a zig-zag of natural equivalences between $F$ and $G$. In particular, the induced functors $F,G: \mathrm{Ho}(\dSet) \to \mathrm{Ho}(M)$ on homotopy categories are equivalent.
\end{corollary}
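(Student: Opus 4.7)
The plan is to reduce the statement to Proposition \ref{prop_cool} by passing to underlying $\infty$-categories. Since $M$ is a combinatorial model category, its underlying $\infty$-category $M_\infty$ (obtained by localizing at the weak equivalences) is presentable. The hypothesis that $M$ is additive translates directly into additivity of $M_\infty$ in the sense of Proposition \ref{prop_cool}, because the homotopy category $\mathrm{Ho}(M_\infty)$ coincides with $\mathrm{Ho}(M)$. The left Quillen functors $F$ and $G$ descend to left adjoint $\infty$-functors $F_\infty, G_\infty\colon \dSet_\infty \to M_\infty$, and any weak equivalence $F(\eta)\to G(\eta)$ in $\mathrm{Ho}(M)$ lifts to an equivalence $F_\infty(\eta) \simeq G_\infty(\eta)$ in $M_\infty$. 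Proposition \ref{prop_cool} then immediately yields an equivalence $F_\infty \simeq G_\infty$ of $\infty$-functors.

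From this $\infty$-categorical equivalence the second, weaker claim is easy: passing to homotopy categories gives an isomorphism of the induced functors $\mathrm{Ho}(\dSet)\to \mathrm{Ho}(M)$, since $\mathrm{Ho}(F_\infty) = F$ and $\mathrm{Ho}(G_\infty) = G$ on homotopy categories.

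The main obstacle is to upgrade the abstract $\infty$-categorical equivalence $F_\infty \simeq G_\infty$ to an actual zig-zag of natural weak equivalences between the point-set functors $F$ and $G$. To accomplish this I would argue as follows. Consider the relative category $\mathrm{Fun}^L(\dSet, M)$ of left Quillen functors with natural transformations, localized at pointwise weak equivalences. A standard comparison (available for combinatorial targets, via simplicial or framed resolutions) identifies its underlying $\infty$-category with the $\infty$-category $\mathrm{Fun}^L(\dSet_\infty, M_\infty)$ of left adjoint $\infty$-functors. In particular, two objects of $\mathrm{Fun}^L(\dSet, M)$ are equivalent in the localized category if and only if they are connected by a zig-zag of pointwise weak equivalences of natural transformations. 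Applying this to $F$ and $G$ produces the desired zig-zag.

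Thus the proof reduces to quoting the previous proposition and the comparison between $\infty$-functors and naturally transformed Quillen functors; the non-trivial work has already been done in Proposition \ref{prop_cool} via the results of \cite{GGN}, and the technical lift to natural transformations is standard model-categorical infrastructure.
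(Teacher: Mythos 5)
The paper gives no proof of this corollary---it is stated as an immediate consequence of Proposition \ref{prop_cool}---and your argument is exactly the intended deduction: pass to underlying $\infty$-categories, apply the proposition, and rectify the resulting equivalence of left adjoint $\infty$-functors to a zig-zag of natural weak equivalences. The only place where you are terser than one might like is the rectification step (being isomorphic in a localization does not by itself yield a zig-zag of weak equivalences; one needs, e.g., to identify left Quillen functors out of $\dSet$ with suitably cofibrant objects of $\mathrm{Fun}(\Omega,M)$ with its projective model structure, where isomorphic objects of the homotopy category \emph{are} connected by such zig-zags), but this is precisely the standard infrastructure you allude to and is not a gap in substance.
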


%%%%%%%%%%%%%%%%%%%%%%%%%%%%%%%%%%%%%%%%%%%%%%%%%%%%%%%%%%%%%%%%%%%%%%%%%%%%%%%%%%%%%%%%%%%%%%%%%%%%%%%%%%%%%%%%%%%%%%%%%%%%%%%%%%%%%%%%%%%%%%%%%%%%%%%%%%%%%%%%%%%%%%%%%%%%%%%%%%%%%%%%%%%%%%%%%%%%%%%%%%%%%%%%
\section{Some conventions about signs}
%%%%%%%%%%%%%%%%%%%%%%%%%%%%%%%%%%%%%%%%%%%%%%%%%%%%%%%%%%%%%%%%%%%%%%%%%%%%%%%%%%%%%%%%%%%%%%%%%%%%%%%%%%%%%%%%%%%%%%%%%%%%%%%%%%%%%%%%%%%%%%%%%%%%%%%%%%%%%%%%%%%%%%%%%%%%%%%%%%%%%%%%%%%%%%%%%%%%%%%%%%%%%%%%

In this section we describe a labelling of vertices of a planar tree and two sign conventions for faces and automorphisms. These labels and signs will be used in the definition of the homology of a dendroidal set. One of these two sign conventions is taken from \cite[Section 4.5]{GLW}. 

Recall that planar trees are trees with extra structure - the set of inputs of each vertex carries a total order. We depict planar trees by drawing the inputs from the left to the right in increasing order. There is a dendroidal set $P:\Omega^{op} \to \Set$ such that $P(T)$ is the set of planar structures of the tree $T$. We call it also the presheaf of planar structures. Note that $P=A_{\infty}=N_d(Ass)$ is the dendroidal nerve of the operad for associative algebras and it is a normal dendroidal set. 

Let $(T,p)$ be a planar tree, i.e. $T$ is a non-planar tree and $p$ a planar structure on $T$. For every face map $f\colon S \to T$ there is a planar structure on $S$ given as $P(f)(p)$, so that $f$ is a map of planar trees with these planar structures. 

We define a labelling of the vertices of a planar tree with $n$ vertices by the numbers $0$,$1$,\ldots, $n-1$ as follows. We label the vertex above the root edge with 0 and then proceed recursively. 
Whenever we label a vertex we continue labelling the vertices of its leftmost branch (until we reach a top vertex), then we label the vertices of the second branch from the left and so on until we have labelled
 all the vertices. An example of such a labelling is given below.

\begin{definition}
We assign a sign $\sgn_p(\partial_a)\in\{-1,+1\}$ to each elementary face map  $\partial_a: \partial_a T \to T$ using the labelling of the planar tree $(T,p)$ as follows:
If $T$ is a corolla, we assign $-1$ to the inclusion of the root edge 
and $+1$ to the inclusion of a leaf. If $T$ has at least two vertices, we assign $+1$ to the root face, which is the face obtained by chopping off the root vertex (and which only exists if the root vertex has only one inner edge assigned to it).
We assign $(-1)^k$ to the the face $\partial_e T \to T$ if $e$ is an inner edge which is attached to vertices labelled with $k$ and $k-1$ and we assign $(-1)^{k+1}$ to $\partial_v$ if $v$ is a top vertex labelled with $k$.
\end{definition}

Here is an example of such a labelling of the vertices. The signs associated to the inner faces are shown next to the corresponding inner edge and the signs associated to the top faces are shown next to one of the leaves. 
\[
\xymatrix@R=10pt@C=12pt{
&&&&&&&\\
&&&&&&&\\
&& *=0{\, \, \bullet_3} \ar@{-}[uul]^{+}  \ar@{-}[uur] &&&&& \\
&&&&&&&\\
&& *=0{\,\,\, \bullet^2} \ar@{-}[uu]^{-} && *=0{\bullet} \ar@{-}[uul]\ar@{-}[uu]\ar@{-}[uur]_{-}\ar@{-}_{4}&&& \\
&&&&&&&\\
&&& *=0{\, \, \bullet_{1} }\ar@{-}[uul]^{+} \ar@{-}[uur]_{+}&&& *=0{\, \, \bullet_5} \ar@{-}[uu]_{+} &\\
&&&&&&&\\
&&&& *=0{\, \, \bullet_0} \ar@{-}[uul]^{-} \ar@{-}[uur] \ar@{-}[uurr]_{-} &&& \\
&&&&&&&\\
&&&&*=0{}\ar@{-}[uu] &&&
}
\]

Next we define a sign convention that will be used when we consider different planar structures. 

\begin{definition}
Let $T$ be a tree and let $p', p\in P_T$ be two planar structures. Each of these planar structures gives a labelling of the vertices of $T$ as described above. Thus there is a permutation on the set of labels $\{1,...,n-1\}$ which sends the labelling induced by $p'$ to the labelling induced by $p$ (we omit the label 0 since the root vertex must be fixed). We define $\sgn(p', p) \in\{-1,+1\}$ to be the sign of that permutation. 
\end{definition}

\begin{example}
Here is a simple example. Let $(T,p)$ be the following planar tree
\[
\xymatrix@R=10pt@C=12pt{
&&&&&&&\\
&&& *=0{\, \, \,\, \bullet_{v} }\ar@{-}[u] && *=0{\, \, \, \bullet_w} \ar@{-}[u] &&\\
&&&& *=0{\, \, \bullet_u} \ar@{-}[ul] \ar@{-}[ur]  &&& \\
&&&&*=0{}\ar@{-}[u] &&&
}
\]
The same tree $T$ has one more planar structure $p'$. The vertices $v$ and $w$, respectively, have labels $1$ and $2$ in $p$, but labels $2$ and $1$ in $p'$, so $\sgn(p',p)=-1$. 
\end{example}

Let $\partial_e : \partial_e T \to T$ be an elementary face map. If $p\in P_T$ is a planar structure on $T$, then we denote $p_e=P(\partial_e)(p)\in P_{\partial_e T}$. 
 
\begin{lemma} \label{signs}
Let $\partial_e: \partial_e T \to T$ be an elementary face map. For any two planar structures $p', p\in P_T$ we have  
\begin{equation*}
\sgn(p', p) \cdot \sgn_p(\partial_e) = \sgn (p'_e, p_e) \cdot \sgn_{p'}(\partial_{e}). 
\end{equation*}
\end{lemma}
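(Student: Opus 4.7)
The plan is to reduce to the case of an elementary swap of planar structures and then do a case analysis on the position of $e$.

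\textbf{Reduction.} Both sides of the identity are multiplicative in the change of planar structure: if it holds for $(p'', p)$ and for $(p', p'')$ with respect to $\partial_e$, then multiplying the two relations and using $\sgn_{p''}(\partial_e)^2 = 1$ together with $\sgn(p', p) = \sgn(p', p'')\sgn(p'', p)$ gives the identity for $(p', p)$. Since any two planar structures on $T$ are connected by a chain of elementary swaps (exchanges of two adjacent inputs at a single vertex $u$), it suffices to treat this elementary case.

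Suppose then that $p'$ is obtained from $p$ by swapping two adjacent inputs $a, b$ at $u$, with $a$ to the left of $b$ in $p$. Let $A, B$ be the subtrees above $a, b$, with $\alpha$ and $\beta$ vertices (either may be zero). By the depth-first labelling rule, the vertices of $A$ and of $B$ occupy two consecutive blocks of labels of sizes $\alpha$ and $\beta$, and exchanging them is a block transposition of sign $(-1)^{\alpha\beta}$; hence $\sgn(p', p) = (-1)^{\alpha\beta}$.

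\textbf{Case analysis.} If $e$ has no endpoint in $A \cup B \cup \{a, b\}$, or if the upper vertex of $e$ is $u$ itself (so $e$ is the output edge of $u$), then the single label that determines $\sgn_p(\partial_e)$ is unchanged by the swap, and the same swap is available in $\partial_e T$ on subtrees of the same sizes; both sides equal $(-1)^{\alpha\beta}\sgn_p(\partial_e)$. If $e$ lies strictly inside $A$, the labels of vertices in $A$ shift uniformly by $+\beta$ from $p$ to $p'$. Since each face sign has the form $\pm(-1)^{\text{single label}}$, this gives $\sgn_{p'}(\partial_e) = (-1)^\beta \sgn_p(\partial_e)$; in $\partial_e T$, $A$ is replaced by $\partial_e A$ of size $\alpha - 1$, so $\sgn(p'_e, p_e) = (-1)^{(\alpha-1)\beta}$, and the identity reduces to $\alpha\beta \equiv (\alpha - 1)\beta + \beta \pmod 2$. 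The case $e \subset B$ is symmetric, with shift $-\alpha$ and $\sgn(p'_e, p_e) = (-1)^{\alpha(\beta - 1)}$.

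The remaining possibility is that $e$ directly involves the swap at $u$: $e$ is one of the inputs $a, b$ as an inner edge, a top face at $u$ (only possible when $A, B$ are empty), or a bottom face at $u$ (when $u$ is the root vertex). Each subcase requires a direct calculation. The prototype is $e = a$: contracting $a$ merges $u$ with the top vertex $u_a$ of $a$, inserting the inputs of $u_a$ at the position of $a$, so the exchange in $\partial_a T$ becomes a block swap of the inputs of $u_a$ (total subtree size $\alpha - 1$) with $b$ (size $\beta$), giving $\sgn(p'_a, p_a) = (-1)^{(\alpha - 1)\beta}$; the label of $u_a$ shifts by $\beta$ from $p$ to $p'$, so $\sgn_{p'}(\partial_a)/\sgn_p(\partial_a) = (-1)^\beta$, and the identity follows from the same parity congruence. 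The main obstacle is the exhaustive bookkeeping of these remaining subcases: the face operation modifies the combinatorics at $u$ itself, so one must verify in each type (inner-edge input, top face, bottom face) that the new subtree sizes and the shift in the relevant label combine to produce the correct parity, but in each instance the verification collapses to an elementary parity identity once the data is read off.
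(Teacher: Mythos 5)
Your proof is correct, but it takes a genuinely different route from the paper's. The paper works with the full permutation $\tau$ relating the labellings induced by $p$ and $p'$ all at once: it observes that the permutation $\tau_e$ for the face is obtained from $\tau$ by deleting the label $k$ of the relevant vertex from the domain and $l=\tau(k)$ from the codomain, and then counts inversions directly to obtain $\sgn(\tau)=(-1)^{l-k}\sgn(\tau_e)$, which combines with $\sgn_p(\partial_e)=\pm(-1)^k$ and $\sgn_{p'}(\partial_e)=\pm(-1)^l$ to give the claim (plus the trivial root-face case). You instead exploit the cocycle property of both sides of the identity to reduce to a single swap of adjacent inputs at one vertex, compute $\sgn(p',p)$ as a block-transposition sign $(-1)^{\alpha\beta}$, and analyse how the face interacts with the swap. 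Your reduction is sound: the multiplicativity argument is exactly right, and adjacent swaps at single vertices do generate all changes of planar structure. Your prototype computation for $e=a$ -- the only genuinely nontrivial interaction, where contracting $e$ merges the swap vertex with the vertex above -- is correct, and the subcases you leave as ``elementary parity identities'' really are degenerate (a top face at $u$ forces $\alpha=\beta=0$ so every sign involved is $+1$; the root face always carries sign $+1$ and at most one of $A,B$ can be nonempty there, so again the permutations are unaffected). What the paper's approach buys is that it avoids the case analysis on the position of $e$ entirely, at the price of a somewhat delicate inversion count; what yours buys is that each individual verification is a transparent parity check, and the structural observation that the lemma is a cocycle condition which only needs to be verified on generators is arguably the more illuminating way to see why it holds.
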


\begin{proof}
If $T$ is a corolla, the statement is true since all the terms are $+1$. 
Let $|T|=n+1$ be the number of vertices $T$ and $\tau\in \Sigma_n$ the permutation assigned to  the planar structures $p'$ and $p$. Suppose first that $e$ is an inner edge of $T$. Let $k$ be the label given to the vertex above $e$ and $\tau(k)=l$. Then $\sgn_p(\partial_e)=(-1)^k$ and $\sgn_{p'}(\partial_e)=(-1)^l$.

We denote by $\tau_e\in \Sigma_{n-1}$ the  permutation assigned to the planar structures $p'_e$ and $p_e$. Observe that the permutation $\tau_e:\{1,2,...,n-1\} \to \{1,2,...,n-1\}$ is obtained from the permutation $\tau:\{1,2,...,n\} \to \{1,2,...,n\}$ in the following way. We delete $k$ in the domain of $\tau$ and relabel the elements greater than $k$ by decreasing them by 1. Also we delete $l$ in the codomain of $\tau$ and relabel the elements greater than $l$ by decreasing them by 1. Now we compare the number of inversions of $\tau$ (i.e. the instances of pairs $(a,b)$ such that $a,b\in \{1,2,...,n\}$,  $a<b$ and $\tau(a)>\tau(b)$) to the number of 
inversions of $\tau_e$. Actually the inversions in $\tau_e$ are in bijection with the inversions $(a,b)$ of $\tau$ such that $a$ and $b$ are different than $k$ 
(if $a,b\neq k$ then the mentioned relabelling does not affect the relative order of $\tau(a)$ and $\tau(b)$ when considered in the codomain of $\tau_e$). 
So we need to calculate the number of elements of the set 
\begin{equation} \label{perm}
 \{(a,k) : 1\leq a < k, \tau(a) > l \} \cup  \{(k,b) : k< b \leq n, \tau(b) < l\}. 
\end{equation}
Denote by $p$ the number of elements of the set $\{a : 1\leq a < k, \tau(a) > l \}$. Then the number of elements of the set $\{a : 1\leq a < k, \tau(a) < l \}$ is $k-p-1$.
But the elements of the latter set are in bijection with the elements of $\{c : 1\leq c <l, \tau^{-1}(c) <k\}$. This implies that the number of elements of the set 
$\{c : 1\leq c <l, \tau^{-1}(c) > k\}$ is $l-(k-p-1)-1=l-k+p$, and this set is in bijection with $\{b : k< b \leq n, \tau(b) < l\}$. 
So the number of the elements of the set \ref{perm} is $l-k+p+p=l-k+2p$ and we conclude $\sgn(\tau)=(-1)^{l-k+2p}\sgn(\tau_e)$. 

If we suppose $\partial_e$ is a face map corresponding to a top vertex of $T'$ labelled by $k$ and $\tau(k)=l$, then in the same way we conclude $\sgn(\tau)=(-1)^{l-k+2p}\sgn(\tau_e)$. 
Since in this case $\sgn_p(\partial_e)=(-1)^{k+1}$ and $\sgn_{p'}(\partial_e)=(-1)^{l+1}$, the statement of the lemma holds.

If $\partial_e$ is a face map corresponding to a root vertex, then $\sgn(\tau)=\sgn(\tau_e)$ (because in this case $\tau(1)=1$ and $\tau_e$ is obtained by deleting 1 in domain and codomain of $\tau$) and $\sgn_p(\partial_e)=\sgn_{p'}(\partial_e)=1$ by definition. 
\end{proof}

%%%%%%%%%%%%%%%%%%%%%%%%%%%%%%%%%%%%%%%%%%%%%%%%%%%%%%%%%%%%%%%%%%%%%%%%%%%%%%%%%%%%%%%%%%%%%%%%%%%%%%%%%%%%%%%%%%%%%%%%%%%%%%%%%%%%%%%%%%%%%%%%%%%%%%%%%%%%%%%%%%%%%%%%%%%%%%%%%%%%%%%%%%%%%%%%%%%%%%%%%%%%%%%%
\section{The chain complex of a dendroidal set}\label{sectionbla}
%%%%%%%%%%%%%%%%%%%%%%%%%%%%%%%%%%%%%%%%%%%%%%%%%%%%%%%%%%%%%%%%%%%%%%%%%%%%%%%%%%%%%%%%%%%%%%%%%%%%%%%%%%%%%%%%%%%%%%%%%%%%%%%%%%%%%%%%%%%%%%%%%%%%%%%%%%%%%%%%%%%%%%%%%%%%%%%%%%%%%%%%%%%%%%%%%%%%%%%%%%%%%%%%

In this section we define two chain complexes associated to a dendroidal set such that the definitions extend the construction of the normalized and unnormalized chain complex associated to a simplicial set. \\

Recall that for a tree $T$ we denote by $|T|$ the number of vertices of $T$. Let $X$ be a dendroidal set and $n\in \mathbb{N}_0$. We consider the free abelian group
\begin{equation} \label{def_c}
C(X)_n :=\bigoplus_{T\in \Omega,  |T|=n} \bigoplus_{p\in P_T} \mathbb{Z}\langle X_T \rangle 
\end{equation}
generated by triples $(T,p,x)$ where $(T,p)$ is a planar tree and $x \in X_T$.
For trees $T$ and $T'$, planar structures $p\in P_T$ and $p'\in P_{T'}$, an isomorphism $\tau: T'\to T$ and a dendrex $x\in X_T$  we consider the free subgroup $A(X)_n$ generated by 
\begin{equation} \label{relation}
(T,p,x) - \sgn(p', \tau^* p) (T', p', \tau^*(x)). 
\end{equation}
Here  $\tau^*(x)$ is  $X(\tau)(x)$ for $X(\tau) \colon X_T \to X_{T'}$ and  $\tau^*(p)$ denotes $P(\tau)(p)$ for $P(\tau)\colon P_T \to P_{T'}$. 

\begin{definition} Let $X$ be a dendroidal set. For each $n\in \mathbb{N}_0$ we define an abelian group $\Ch^{un}(X)_n$ as the quotient 
\begin{equation*}
\Ch^{un}(X)_n:=  C(X)_n / A(X)_n 
\end{equation*}
or more suggestively 
\begin{equation*} 
\Ch^{un}(X)_n:= \frac{\left( \bigoplus_{T\in \Omega, |T|=n} \bigoplus_{p\in P_T} \mathbb{Z}\langle X_T \rangle \right)} { (T,p,x) \sim \sgn(p', \tau^* p) (T', p', \tau^*(x)) }.
\end{equation*}
\end{definition}

Note that $\Ch^{un}(X)_n$ is a free abelian group since we have identified generators of a free abelian group $C(X)_n$. The generators of $\Ch^{un}(X)_n$ are in bijection with the isomorphism classes of dendrices of $X$.
 Each representative carries additional information: a planar structure, which is used only for the definition of the differential that we will give now. As we will show, it does not matter which planar structure we use. We write $[T,p,x]$ for the generator represented by the triple $(T,p,x)$.

\begin{definition}
Let $X$ be a dendroidal set. For every positive integer $n$, we define a map $d\colon \Ch^{un}(X)_n\to \Ch^{un}(X)_{n-1}$ on generators by
\begin{equation*}
d([T,p,x]):=\sum_{\partial_e \colon \partial_e  T \to T} \sgn_p(\partial_e) [\partial_e  T, p_e, \partial_e^* x], 
\end{equation*}
and extend it additively. The sum is taken over the set of elementary face maps of $T$.
\end{definition}

Note that by the definition of elementary face maps, its colours are subsets of the colours of $T$. There can be other monomorphisms $S \to T$ which are isomorphic over $T$ to such elementary face maps. 
These are not included in the indexing set of our sum. One could also sum over isomorphism classes of such monomorphisms but that leads to complications in terms of signs.

\begin{lemma} \label{well-def}
The map $d\colon \Ch^{un}(X)_n \to \Ch^{un}(X)_{n-1}$ is well-defined. 
\end{lemma}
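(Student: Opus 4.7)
The map $d$ is a priori defined on generators of the free group $C(X)_n$, so the content of the lemma is that $d$ descends to the quotient by the subgroup $A(X)_n$. Concretely, for each isomorphism $\tau\colon T' \to T$ in $\Omega$, each $p\in P_T$, $p' \in P_{T'}$ and $x\in X_T$, the plan is to verify the identity
\[
d([T,p,x]) = \sgn(p', \tau^*p)\cdot d([T', p', \tau^*x])
\]
in $\Ch^{un}(X)_{n-1}$, so that the generators of $A(X)_n$ listed in \eqref{relation} are sent to zero.

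First I would use that the isomorphism $\tau$ induces a bijection between the elementary face maps of $T$ and those of $T'$: for each elementary face $\partial_e\colon \partial_e T \to T$ there is a unique corresponding $\partial_{e'}\colon \partial_{e'} T' \to T'$ and a unique isomorphism $\tau_e\colon \partial_{e'} T' \to \partial_e T$ satisfying $\partial_e \circ \tau_e = \tau \circ \partial_{e'}$. For each pair $(e,e')$ I then apply the relation \eqref{relation} to $\tau_e$ inside $\Ch^{un}(X)_{n-1}$ to obtain
\[
[\partial_e T, p_e, \partial_e^* x] = \sgn\bigl(p'_{e'},\, \tau_e^*(p_e)\bigr)\cdot [\partial_{e'} T', p'_{e'}, \partial_{e'}^*(\tau^* x)].
\]
The dendrex component matches because $\tau_e^*(\partial_e^* x) = \partial_{e'}^*(\tau^* x)$ by commutativity of the defining square, and the planar structure is identified using functoriality of the presheaf $P$: $\tau_e^*(p_e) = P(\partial_e\tau_e)(p) = P(\tau\,\partial_{e'})(p) = (\tau^*p)_{e'}$.

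It then remains to identify the combined sign attached to each summand. Since the vertex labelling of a planar tree depends only on its planar structure, the isomorphism of planar trees $\tau\colon (T', \tau^*p) \to (T,p)$ transports labels to labels, and in particular $\sgn_p(\partial_e) = \sgn_{\tau^*p}(\partial_{e'})$. After this substitution the required identity
\[
\sgn_{\tau^*p}(\partial_{e'}) \cdot \sgn\bigl(p'_{e'}, (\tau^*p)_{e'}\bigr) = \sgn(p', \tau^*p) \cdot \sgn_{p'}(\partial_{e'})
\]
is precisely Lemma \ref{signs} applied to the two planar structures $p'$ and $\tau^*p$ on $T'$ and to the face $\partial_{e'}$, up to the trivial sign manipulation $AB = CD \Leftrightarrow AD = BC$ in $\{\pm 1\}$. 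Summing over all faces $e$ then completes the argument. The main obstacle in this proof is the sign bookkeeping itself, but it is already isolated in Lemma \ref{signs}, so once that lemma is invoked the well-definedness of $d$ reduces to a formal rearrangement of sums.
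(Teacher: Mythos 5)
Your proposal is correct and follows essentially the same route as the paper's proof: reduce to the generators of $A(X)_n$, use the induced bijection on elementary faces together with the unique isomorphism $\tau_e$ and the functoriality identity $(\tau^*p)_{e'}=\tau_e^*(p_e)$, apply the relation \eqref{relation} in degree $n-1$, and conclude with Lemma \ref{signs} applied to $p'$ and $\tau^*p$ on $T'$. The only (harmless) difference is that you make explicit the step $\sgn_p(\partial_e)=\sgn_{\tau^*p}(\partial_{e'})$, which the paper leaves implicit.
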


\begin{proof}
Let $x\in X_T$ and $x'=\tau^* x\in X_{T'}$ for some isomorphism $\tau \colon T' \to T$. If $p\in P_T$ and $p'\in P_{T'}$ are two planar structures, we have $[T,p,x]=\sgn(p', \tau^*p)[T', p', \tau^*x]$. So, we need to prove that  
\[
\sum_{\partial_e \colon \partial_e  T' \to T'} \sgn_{p'}(\partial_e) [\partial_e  T', p'_e, \partial_e^* (x')] =\sgn(p',\tau^* p) \sum_{\partial_f \colon \partial_f  T \to T} \sgn_{p}(\partial_f) [\partial_f  T, p_f, \partial_f^*(x)],
\]
where the sums are taken over the set of elementary face maps of $T'$ and $T$, respectively. 

\noindent There is a unique isomorphism $\tau_e \colon \partial_e  T' \to \partial_{\tau(e)} T$ such that $\tau \partial_e = \partial_{\tau(e)} \tau_e$. Note that \[(\tau^*p)_e=P(\partial_{e})P(\tau)(p)=P(\tau_e)P(\partial_{\tau(e)})(p)=\tau_e^* p_{\tau(e)}.\]

\noindent Hence, lemma \ref{signs} implies 
\begin{align*}
\sgn_{p'}(\partial_e)[\partial_e  T', p'_e, \partial_e^* x']&=\sgn_{p'}(\partial_{e})[\partial_e  T', p'_e, \partial_e^* \tau^* x] \\
&=\sgn_{p'}(\partial_{e})[\partial_e T', p'_e, \tau^*_e \partial_{\tau(e)}^* x] \\
&=\sgn_{p'}(\partial_{e})\sgn(p'_e, \tau_e^* p_{\tau(e)}) [\partial_{\tau(e)} T, p_{\tau(e)}, \partial^*_{\tau(e)} x]  \\
&= \sgn(p', \tau^* p) \sgn_{p}(\partial_{\tau(e)}) [\partial_{\tau(e)} T, p_{\tau(e)}, \partial^*_{\tau(e)} x] \\
%&=\sgn(p', \tau^* p) \sgn_{p}(\partial_f) [\partial_f T, p_f, \partial^*_f x] 
\end{align*}
The set of elementary face maps $\partial_e \colon \partial_e T' \to T'$ is in bijection with the set of elementary face maps $\partial_f\colon \partial_f T \to T$ by $e \mapsto f=\tau(e)$, so collecting these terms gives the desired statement.
\end{proof}

\begin{proposition}
The graded abelian group $(Ch^{un}(X), d)$ is a chain complex. 
\end{proposition}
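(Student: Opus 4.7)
The plan is to establish $d \circ d = 0$ by expanding it on a generator $[T,p,x]$ and matching the resulting terms in pairs that cancel. Writing out the double sum gives
$$d^2[T,p,x] = \sum_{e}\sum_{g} \sgn_p(\partial_e)\,\sgn_{p_e}(\partial_g)\, [\partial_g\partial_e T,\, (p_e)_g,\, \partial_e^*\partial_g^* x],$$
where $e$ ranges over elementary face maps of $T$ and $g$ over elementary face maps of $\partial_e T$. The goal is to organise this sum into cancelling pairs.

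The pairing of index pairs comes from the dendroidal identities recalled in the preliminaries: for each $(e,g)$ there is a unique pair $(g',e')$ (with $g' \neq e$) of elementary face maps such that $\partial_g\partial_e = \partial_{e'}\partial_{g'}$ as morphisms in $\Omega$, and this assignment is an involution without fixed points on the set of index pairs. Because the two composites agree as morphisms of trees, applying $X$ shows that $\partial_e^*\partial_g^* x = \partial_{g'}^*\partial_{e'}^* x$, and applying $P$ shows that $(p_e)_g = (p_{g'})_{e'}$, so the generators $[\partial_g\partial_e T,\, (p_e)_g,\, \partial_e^*\partial_g^* x]$ and $[\partial_{e'}\partial_{g'} T,\, (p_{g'})_{e'},\, \partial_{g'}^*\partial_{e'}^* x]$ are literally equal elements of $\Ch^{un}(X)_{n-2}$. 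It remains to verify the sign identity
$$\sgn_p(\partial_e)\,\sgn_{p_e}(\partial_g) \;=\; -\,\sgn_p(\partial_{g'})\,\sgn_{p_{g'}}(\partial_{e'}).$$

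I would prove this by a case analysis on the types (inner edge, top vertex, root face, or the special corolla-bottom case) of the four elementary faces involved. In the baseline case where all four are inner edges, the labelling convention behaves just as in the simplicial setting: contracting an inner edge fuses two consecutive labels and shifts higher labels down by one, so if the vertex above $e$ has label $k$ and the vertex above $g$ in $\partial_e T$ has label $\ell$ with, say, $k < \ell$, then in the paired picture the roles are swapped and exactly one label is shifted by one, producing the required minus sign. The cases where $e$ or $g$ is a top vertex or root face are analogous, using that chopping off a top vertex labelled $k$ shifts labels above $k$ down by one. Throughout, Lemma \ref{signs} allows us to change the planar structure freely, so one may choose, for each pair under consideration, a convenient $p$ making the relevant vertices carry consecutive labels and reducing the verification to a handful of normal forms.

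The main obstacle is the subcase flagged in the preliminaries in which $e' \neq e$ or $g' \neq g$: for example $e$ an inner edge attached to a top vertex $w$, and $g$ the face chopping off $w$ in $\partial_e T$, whose partner $(g', e')$ consists of two top-vertex face maps chopping off different subtrees of $T$. Here the two double faces of $T$ genuinely go through different intermediate trees, and one must carefully track how the labels on $\partial_e T$ and on $\partial_{g'} T$ relate to those on the common double face. Checking in these mixed cases that $\sgn_p(\partial_e)\sgn_{p_e}(\partial_g)$ and $\sgn_p(\partial_{g'})\sgn_{p_{g'}}(\partial_{e'})$ differ by exactly one sign is precisely what the labelling convention was designed to make possible, but it is also the lengthiest and most delicate part of the argument.
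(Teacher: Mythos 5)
Your proof follows essentially the same route as the paper: expand $d^2$ on a generator, pair the summands via the dendroidal identity $\partial_g\partial_e = \partial_{e'}\partial_{g'}$, and reduce everything to the sign identity $\sgn_p(\partial_e)\sgn_{p_e}(\partial_g) = -\sgn_p(\partial_{g'})\sgn_{p_{g'}}(\partial_{e'})$, which the paper disposes of by citing Lemma 4.3 of \cite{GLW} while you sketch the case analysis directly (correctly flagging the mixed inner-edge/top-vertex case as the delicate one). The only quibble is notational: the pullback along the composite $\partial_e\circ\partial_g$ should be written $\partial_g^*\partial_e^* x$ rather than $\partial_e^*\partial_g^* x$.
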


\begin{proof}
We need to prove that $d^2=0$. Consider $x\in X_T$ and a planar structure $p$. We write $[x]$ instead of $[T,p,x]$ as the planar structure is clear from the context. We have the following calculation
\begin{align*}
 d^2([x]) &= d\left( \sum_{\partial_e^* \colon \partial_e T \to T} \sgn_p(\partial_e) [\partial_e^* x] \right)\\
	  &= \sum_{\partial_e \colon \partial_e  T \to T}  \sum_{\partial_f \colon \partial_f \partial_e  T \to \partial_e T} \sgn_p(\partial_e) \sgn_{p_e}(\partial_f) [\partial_f^* \partial_e^* x]
\end{align*}

For every two elementary face maps $\partial_e\colon \partial_e T \to T$ and $\partial_f \colon \partial_f \partial_e T \to \partial_e T$ there are elementary face maps $\partial_{f'} \colon \partial_{f'} T \to T$ and $\partial_{e'}\colon \partial_{e' } \partial_{f'} T \to \partial_{f'} T$ such that $\partial_e \partial_f = \partial_{f'} \partial_{e'}$.
The sign convention for faces of a planar tree is defined exactly so that the following holds
\begin{equation*}
\sgn_p(\partial_e) \sgn_{p_e}(\partial_f) = -\sgn_p(\partial_{f'}) \sgn_{p_{f'}}(\partial_{e'}).
\end{equation*}

This follows easily from the sign convention by inspection and it is also stated in \cite{GLW} as Lemma 4.3. 
Hence every term in the above sum appears exactly twice, each time with a different sign. 
This proves that the above sum is zero, i.e. $d^2=0$. 
\end{proof}

\noindent Finally, for a morphism $f:X\to Y$ of dendroidal sets, we define 
\begin{equation*}
\Ch^{un}(f)_n([T,p,x])=[T,p,f(x)], \quad x\in X_T. 
\end{equation*}
Since $f$ is a morphism of dendroidal sets it follows that $\Ch^{un}(f)_n$ is a well-defined morphism of chain complexes. 
In this way we obtain a functor $\Ch^{un}\colon \dSet \to \Ch_{\geq0}$. 

\begin{proposition}
Let $X$ be a dendroidal set. Consider the subgroups $D(X)_n \subset Ch^{un}(X)_n$ generated by the classes of degenerate dendrices. Then $D(X)$ is a subcomplex, i.e. it is closed under taking differentials.
\end{proposition}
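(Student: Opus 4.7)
Let $[T,p,x]$ be a generator of $D(X)_n$, so $x$ is degenerate. Write $x = \sigma^*(y)$ for some elementary degeneracy $\sigma\colon T\to S$ collapsing a unary vertex $v$ of $T$ (with input edge $e$ and output edge $e'$) and some $y \in X_S$. The plan is to expand
$$d([T,p,x]) = \sum_{\partial_f\colon \partial_f T \to T} \sgn_p(\partial_f)\,[\partial_f T, p_f, \partial_f^* x]$$
and to split the sum into the two ``exceptional'' faces that interact with $v$ and the remaining ``spectator'' faces that do not.

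For any spectator face $\partial_f$, the dendroidal identities recalled in Section~2 (and proved in \cite[Section 2.2.3]{MoerBar}) provide a factorization $\sigma\circ\partial_f = \partial_{f'}\circ\bar\sigma$, where $\partial_{f'}\colon\partial_{f'}S\to S$ is an elementary face of $S$ and $\bar\sigma\colon\partial_f T\to\partial_{f'}S$ is an elementary degeneracy (still collapsing the descendant of $v$ living in $\partial_f T$). Then
$$\partial_f^* x \;=\; \partial_f^*\sigma^* y \;=\; \bar\sigma^*\partial_{f'}^* y$$
lies in the image of the elementary degeneracy $\bar\sigma$, so it is degenerate, and $[\partial_f T, p_f, \partial_f^* x]$ is already a generator of $D(X)_{n-1}$.

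The exceptional faces are precisely two and depend on where $v$ sits in $T$. If $v$ is an interior vertex (both $e, e'$ inner), they are the inner faces $\partial_e T$ and $\partial_{e'} T$; if $v$ is a top vertex (so $e$ is a leaf), they are the top face $\partial_v T$ and the inner face $\partial_{e'} T$; and if $v$ is the bottom vertex (so $e'$ is the root), they are the bottom face $\partial_{v,e} T$ and the inner face $\partial_e T$. In each case both boundary trees are isomorphic to $S$, and under either isomorphism the pullback of $x$ agrees with $y$. What remains is to show that the two exceptional terms cancel in $\Ch^{un}(X)_{n-1}$.

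This last step is a sign computation from the conventions of Section~3. After transporting both classes to a common representative $[S,q,y]$ via the relation \eqref{relation}, their coefficients take the form $\sgn_p(\partial_\alpha)\cdot\sgn(q,\tau_\alpha^* p_\alpha)$ and $\sgn_p(\partial_\beta)\cdot\sgn(q,\tau_\beta^* p_\beta)$, where $\tau_\alpha,\tau_\beta\colon S \to \partial_\alpha T,\partial_\beta T$ are the identifying isomorphisms. Verifying that these two numbers are opposite in each of the three positional cases for $v$ is the same kind of ``adjacent face'' argument used to prove $d^2=0$: via Lemma~\ref{signs} it reduces to the identity $\sgn_p(\partial_e)\sgn_{p_e}(\partial_f) = -\sgn_p(\partial_{f'})\sgn_{p_{f'}}(\partial_{e'})$ (Lemma 4.3 of \cite{GLW}). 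This case-by-case sign bookkeeping for $v$ middle, top, or bottom is the only real obstacle in the argument; the rest is purely combinatorial, using only the dendroidal face/degeneracy relations.
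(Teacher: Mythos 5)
Your argument is correct and is essentially the paper's own proof: the paper likewise observes that the degeneracy $\sigma$ commutes with all elementary faces except the two ``adjacent'' ones removing the new unary vertex (so those faces pull back to degenerate dendrices), and that the two exceptional terms carry opposite signs and cancel. Your version merely spells out the three positional cases for the unary vertex and the sign bookkeeping in more detail than the paper does.
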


\begin{proof}
We need to check that the differential restricts to classes represented by degeneracies. Let $\sigma: \sigma T \to T $ be a degeneracy map. Then the tree $T$ has 
two adjacent face maps $\partial_f$ and $\partial_{f'}$ which are equal up to an isomorphism of $\partial_f  T$ and $\partial_{f'} T$. 
Let $x\in X_T$. Then 
\begin{equation*}
 d[\sigma^* x]= \sum_{\partial_e \colon \partial_e  T \to T} \sgn_d(\partial_e) [\partial_e^* \sigma^* x].
\end{equation*}

By the dendroidal identities $\sigma$ commutes with all face maps $\partial_e$ except $\partial_f$ and $\partial_{f'}$, but  
$\sgn_d(\partial_f)[\partial_f^* \sigma^* x] =-\sgn_d(\partial_{f'}) [\partial_{f'}^* \sigma^* x]$. We conclude that the above sum is equal to the sum of classes represented 
by degeneracies.  
\end{proof}

\begin{lemma} \label{AcycDeg}
Let $X$ be a dendroidal sets such that for every non-degenerate dendrex $x\in X_T$ the associated map $x\colon \Omega[T]\to X$ is a monomorphism. Then the subcomplex $D(X)$ is acyclic. 
\end{lemma}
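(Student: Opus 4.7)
My plan is to argue by induction on the skeletal filtration $\emptyset=X^{(-1)}\subseteq X^{(0)}\subseteq X^{(1)}\subseteq\cdots$ of $X$. Because every non-degenerate dendrex is a monomorphism, $X$ is normal, and each inclusion $X^{(k-1)}\hookrightarrow X^{(k)}$ is obtained as the pushout of a coproduct of boundary inclusions $\bigsqcup_{[y]}\partial\Omega[T_y]\to\bigsqcup_{[y]}\Omega[T_y]$, indexed by isomorphism classes of non-degenerate dendrices $y\in X_{T_y}$ with $|T_y|=k$. A tautological inspection of the description of $\Ch^{un}$ as the free abelian group on iso classes of dendrices shows that its subfunctor $D$ of degenerate chains preserves this pushout and that $D(\partial\Omega[T_y])\to D(\Omega[T_y])$ is an injection of free abelian groups. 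Together, this yields an isomorphism
\[
D(X^{(k)})/D(X^{(k-1)}) \;\cong\; \bigoplus_{[y]} D(\Omega[T_y])/D(\partial\Omega[T_y]).
\]

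Since $D(X)=\colim_k D(X^{(k)})$ is a filtered colimit and homology commutes with filtered colimits, an induction on $k$ using the long exact sequence of the pair $(D(X^{(k)}),D(X^{(k-1)}))$ reduces the problem to showing that $D(\Omega[T])/D(\partial\Omega[T])$ is acyclic for every tree $T$. This quotient is freely generated by isomorphism classes of non-identity epimorphisms $\sigma\colon S\to T$, i.e., of non-trivial compositions of elementary degeneracy maps. As a sanity check, for $T=\eta$ the complex $D(\eta)$ has a single generator $[L_n]$ in each positive degree $n$, and the sign computation $1+\sum_{k=1}^{n-1}(-1)^k+(-1)^n=\sum_{k=0}^n(-1)^k$ forces $d[L_n]=[L_{n-1}]$ when $n$ is even and $d[L_n]=0$ when $n$ is odd, so $D(\eta)$ is acyclic.

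The main obstacle is the representable case for an arbitrary tree $T$, and here I would mimic the simplicial proof of Moore's normalization theorem by constructing an explicit contracting chain homotopy. The plan is to fix a distinguished edge $e_0$ of $T$ (for instance the root) and define $h[S,p,\sigma]=\pm[\sigma_{e_0}S,p',\sigma\circ\sigma_{e_0}]$ with the sign and the planar structure $p'$ dictated by the labelling conventions of Section 3. Filtering by the number of copies of $e_0$ already present in $S$, one expects the successive associated gradeds to be contractible via $h$. The hard part will be verifying the identity $dh+hd=\id$, which amounts to a careful bookkeeping of the dendroidal identities relating degeneracies on the same edge and on adjacent edges, together with the sign interactions between $\sgn_p(\partial_e)$ and $\sgn(p',p)$ established in Lemma \ref{signs}. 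This delicate combinatorics is exactly the kind of question that the technical result announced for Section 7 is presumably designed to settle.
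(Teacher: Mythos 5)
Your reduction to the representable case is plausible as far as it goes: the monomorphism hypothesis does give normality, $D$ preserves the skeletal pushouts, the generators of $D(\Omega[T])/D(\partial\Omega[T])$ are indeed the isomorphism classes of non-invertible epimorphisms onto $T$, and your computation for $T=\eta$ is correct. But the argument stops exactly where the content of the lemma begins: acyclicity of $D(\Omega[T])/D(\partial\Omega[T])$ for a general tree $T$ is never established. The proposed contraction $h[S,p,\sigma]=\pm[\sigma_{e_0}S,p',\sigma\circ\sigma_{e_0}]$ is left with unspecified signs and planar structures; the identity $dh+hd=\id$ certainly fails on the nose (besides $\pm[S,p,\sigma]$, the boundary of $h[S,p,\sigma]$ contains all faces that do not collapse the new copy of $e_0$, and the several faces that do collapse a copy of $e_0$ carry alternating signs whose sum depends on the parity of the length of the $e_0$-chain); and the filtration that is supposed to repair this is only asserted. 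Moreover, Proposition \ref{acyclicity} is not a device for verifying $dh+hd=\id$: it requires as input a splitting of the generators into sets $A_n$ and $B_n$, a bijection $B_n\cong A_{n+1}$, $x\mapsto\hat{x}$, and the estimate $w(x\mp d(\hat{x}))<w(x)$ --- none of which you construct. So there is a genuine gap at the central step.

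For comparison, the paper argues globally on $D(X)$, with no skeletal filtration and no contracting homotopy. One fixes a linear order on $X_\eta$; by Lemma \ref{fact_nondeg} each degenerate dendrex $x$ has a smallest colour $a$ at which it is degenerated, say with multiplicity $k$, and $x$ is declared canonical iff $k$ is odd. Degenerating once more at $a$ gives the required bijection from non-canonical to canonical generators, and with weight $0$ on canonical and $1$ on non-canonical generators the parity of $k$ is exactly what makes the faces of $\hat{x}$ lying inside the $a$-chain sum to $\pm x$ while the remaining faces are canonical; Proposition \ref{acyclicity} then finishes the proof. The idea your outline is missing is this parity-based matching keyed to a colour chosen per dendrex (rather than a fixed edge $e_0$ of $T$), which is precisely what converts the ``delicate combinatorics'' you defer into a checkable condition. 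If you want to salvage your approach, replace the homotopy $h$ by this matching and apply Proposition \ref{acyclicity} directly to $D(X)$.
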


\begin{proof}
Let us fix a linear order on the set $X_{\eta}$. 
We will first establish some terminology. If $x\in X_T$ is a dendrex and $e$ is an edge of $T$, then we say that $e^*(x)\in X_\eta$ is a colour of $x$.

Let $x\in X_T$ be a degenerate dendrex of shape $T$. By Lemma \ref{fact_nondeg}, there is a composition of elementary degeneracy maps 
$\sigma\colon T\to S$ and a non-degenerate dendrex $x^{\#}\in X_S$ such that $x=\sigma^*(x^{\#})$.
If $e$ is an edge of $S$ such that $\sigma$ factors through the elementary degeneracy map $\sigma_e$, then the preimage $\sigma^{-1}(e)$ in $T$ has at least two elements. We think of all the elements in $\sigma^{-1}(e)$ as copies of $e$ as, obviously, $x\colon \Omega[T]\to X$ maps all the elements in $\sigma^{-1}(e)$ to the same element $e^*(x)\in X_\eta$.  If we denote $a=e^{*}(x)$ and $\sigma^{-1}(e)$ has $k$ elements, we say that $x$ factors through a $k$-fold degeneracy on $a$.  
Let us consider all elements $a\in X_\eta$ such that $x$ factors through a $k$-fold degeneracy on $a$ for some $k$. The smallest such $a$ with respect to our fixed order on $X_\eta$ will be called the smallest 
degenerate colour of $x$.

If $a$ is the smallest degenerate colour of $x$ and $x$ factors through a $k$-fold degeneracy on $a$, we say $x$ is canonical if $k$ is an odd number.
Consider a class in $D(X)_n$ and its two representatives $x$ and $y$. We have that $x$ is canonical if and only if $y$ is canonical. So it is well-defined to say that a class in $D(X)_n$ is canonical if any of its representatives is canonical. We define $A_n$ as the set of all canonical generators of $D(X)_n$ and $B_n$ as the set of all generators of $D(X)_n$ that are not canonical. 
A bijection between $B_n$ and $A_{n+1}$ is established by degenerating $x$ at the smallest degenerate colour $a$. Note that $A_0, B_0$ and $A_1$ are empty sets and $d(x)=0$ for all $x\in B_1$. 
Let $C_n=D(X)_n$ and $C_{n, can}= Z\langle A_n\rangle$. If we define $w:C_n \to \mathbb{N}_0$ to be 
\begin{equation*}
w(x)=\left\{ 
\begin{array}{ll} 0, & \textrm{if } x\in C_{n,can} \\ 1, & \textrm{otherwise} \end{array} 
\right.
\end{equation*}
then all assumptions of Proposition \ref{acyclicity} obviously hold. So all homology groups of $D(X)$ vanish. 
\end{proof}

\begin{definition}
We define the normalized chain complex as the quotient 
\begin{equation*}
\Ch(X)_\bullet: =\Ch^{un}(X)_\bullet/ D(X)_\bullet
\end{equation*}
\end{definition}

\begin{remark}
Since every dendrex is either degenerate or non-degenerate, we can identify the quotient $\Ch(X)_n$ as a subgroup 
of $\Ch^{un}(X)$ generated by all classes of non-degenerated dendrices. This inclusion however is not compatible with the differentials. The reason is that the differential of non-degenerated 
dendrices is not necessarily a linear combination of non-degenerated dendrices as the following example shows.

Let $x$ be a dendrex of some dendroidal set $X$ of the following shape  
\[
\xymatrix@R=7pt@C=7pt{
*=0{ \bullet } &&\\
&*=0{ \bullet} \ar@{-}[ul] \ar@{-}[ur] &\\
&*=0{}\ar@{-}[u] &
}
\]
such that the inner face of $x$ is degenerate. Then $x$ is non-degenerate, but the differential
\[
d\left(
\xymatrix@R=7pt@C=5pt{
*=0{ \bullet } &&\\
&*=0{ \bullet} \ar@{-}[ul] \ar@{-}[ur] &\\
&*=0{}\ar@{-}[u] &
}
\right) = \xymatrix@R=7pt@C=7pt{ 
&&\\
&*=0{ \bullet} &\\
&*=0{}\ar@{-}[u] &
}
-
\xymatrix@R=7pt@C=7pt{ 
& *=0{} &\\
&*=0{ \bullet} \ar@{-}[u] &\\
&*=0{}\ar@{-}[u] &
}
+
\xymatrix@R=7pt@C=7pt{ 
&&\\
&*=0{ \bullet} \ar@{-}[ul] \ar@{-}[ur] &\\
&*=0{}\ar@{-}[u] &
}
\]
calculated in $\Ch^{un}(X)$ is not a linear combination of non-degenerate dendrices.
More informally we can describe the differential of $\Ch(X)$ as a modification of the differential of $\Ch^{un}(X)$ where we disregard 
degenerate dendrices. 
\end{remark}

%%%%%%%%%%%%%%%%%%%%%%%%%%%%%%%%%%%%%%%%%%%%%%%%%%%%%%%%%%%%%%%%%%%%%%%%%%%%%%%%%%%%%%%%%
\section{Equivalence of the chain complexes}
%%%%%%%%%%%%%%%%%%%%%%%%%%%%%%%%%%%%%%%%%%%%%%%%%%%%%%%%%%%%%%%%%%%%%%%%%%%%%%%%%%%%%%%%%

\begin{proposition}\label{prop_adjunction}
Let $\Gamma_d^{un}:\Ch_{\geq0}  \to \dSet$ be defined by the formula $$\Gamma_d^{un}(C)_T=\Hom_{\Ch_{\geq0}}(\Ch^{un}(\Omega[T]), C).$$
Then the pair $(\Ch^{un}, \Gamma_d^{un})$ forms an adjunction. 
\end{proposition}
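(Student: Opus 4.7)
The plan is to verify this by the standard nerve--realization argument. By construction $\Gamma_d^{un}(C)_T = \Hom_{\Ch_{\geq 0}}(\Ch^{un}(\Omega[T]), C)$, so by Yoneda the desired natural isomorphism
\[
\Hom_{\dSet}(X, \Gamma_d^{un}(C)) \;\cong\; \Hom_{\Ch_{\geq 0}}(\Ch^{un}(X), C)
\]
holds tautologically when $X = \Omega[T]$. To pass from representables to all dendroidal sets I would use that every $X \in \dSet$ is the canonical colimit of representables via the category of elements, $X \cong \colim_{\Omega[T]\to X} \Omega[T]$. Applying $\Hom_{\dSet}(-,\Gamma_d^{un}(C))$ turns this colimit into a limit, and the right-hand side sits in an analogous limit provided $\Ch^{un}$ is cocontinuous. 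Comparing these two limits term by term via the representable case yields the adjunction.

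The one real technical step is therefore to verify that $\Ch^{un} \colon \dSet \to \Ch_{\geq 0}$ preserves colimits. I would check this degreewise, using that colimits in $\Ch_{\geq 0}$ are computed degreewise in $\Ab$ and colimits in $\dSet$ are computed pointwise in $\Set$. Given $X = \colim_\alpha X_\alpha$, the identity $X_T = \colim_\alpha (X_\alpha)_T$ makes the group $C(X)_n$ from (\ref{def_c}) (a direct sum of free abelian groups on the sets $X_T$ indexed by planar trees with $|T|=n$) a colimit of the corresponding $C(X_\alpha)_n$, since ``free abelian group on'' and direct sum both preserve colimits. The relations in (\ref{relation}) are parameterised naturally by the sets $X_T$ as well, so the subgroup $A(X)_n$ is the colimit of the $A(X_\alpha)_n$ and the quotient $\Ch^{un}(X)_n$ is the colimit of the $\Ch^{un}(X_\alpha)_n$. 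The differential is defined by face operations $\partial_e^*$, which are themselves natural in $X$, so the assembly into a chain complex is compatible with colimits as well.

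With cocontinuity of $\Ch^{un}$ established, the adjunction is formal: for arbitrary $X$ and $C$,
\[
\Hom_{\Ch_{\geq 0}}(\Ch^{un}(X), C)
\;\cong\; \lim_{\Omega[T]\to X} \Hom_{\Ch_{\geq 0}}(\Ch^{un}(\Omega[T]), C)
\;=\; \lim_{\Omega[T]\to X} \Gamma_d^{un}(C)_T
\;\cong\; \Hom_{\dSet}(X, \Gamma_d^{un}(C)),
\]
with naturality in $X$ and $C$ being immediate from the naturality of all intermediate steps.

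No deep obstacle is expected: the argument is a mechanical instance of the Kan-extension-type adjunction available for any cocontinuous functor from a presheaf category to a cocomplete category. The only point requiring some care is making sure the relations defining $\Ch^{un}$ and the differential are manifestly natural in $X$, so that each stage of the colimit computation goes through; but since both the isomorphism relations and the face maps are pulled back from operations in $\Omega$, this is automatic.
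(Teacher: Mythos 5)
Your proposal is correct and follows essentially the same route as the paper: both reduce the adjunction to checking that $\Ch^{un}$ preserves colimits (the standard fact for functors out of a presheaf category) and then verify this degreewise using that $C(X)_n$ is a direct sum of free abelian groups on the sets $X_T$ and that the relations are parameterised naturally by dendrices. The only stylistic difference is that the paper presents the quotient $C(X)_n/A(X)_n$ explicitly as a coequalizer of colimit-preserving functors, which is a slightly safer phrasing than asserting that the subgroup $A(X)_n$ itself is a colimit of the $A(X_\alpha)_n$, but your underlying argument is the same.
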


\begin{proof}
It is well known that a functor from a presheaf category to a cocomplete category is left adjoint if and only if it preserves colimits. We want to show that $\Ch$ preserves colimits.
Since colimits in chain complexes are just colimits of the underlying graded abelian groups it suffices to show that for every $n$ the functor
$$ \Ch^{un}(X)_n =  C(X)_n / A(X)_n $$
preserves colimits in $X$. We write $\Ch^{un}$ as a coequalizer
$$
\bigoplus_{\begin{small}
\begin{array}{c}
T,T'\in \Omega, \\
 |T|=|T'|= n
\end{array}
\end{small}} \bigoplus_{\begin{small}\begin{array}{c}
\tau: T \iso T' 
\end{array}\end{small}} 
\bigoplus_{\begin{small}
\begin{array}{c} p\in P_T, \\ p' \in P_{T'} \end{array}
\end{small}} \mathbb{Z}\langle X_T \rangle \, \rightrightarrows \bigoplus_{\begin{small}
\begin{array}{c} T\in \Omega,\\ |T|=n\end{array}
\end{small}  } \bigoplus_{\begin{small}
\begin{array}{c} p\in P_T\end{array}
\end{small}} \mathbb{Z}\langle X_T \rangle 
$$ 
where the maps are given on generators by 
$$
(T,T',\tau,p,p',x) \mapsto (T,p,x) \qquad\text{and}\qquad (T,T',\tau,p,p',x) \mapsto (T',p',  \sgn(p', \tau^* p) \tau^*(x)).
$$
Now we see that both sides of the coequalizer commute with colimits in $X$ since the direct sum functor and the free abelian group functor commute with colimits.
Its also clear that the maps between the two abelian groups commute with colimits since they are (apart from a sign) completely determined by the indexing set.
Since coequalizers also commute with colimits this finishes the proof. 
\end{proof}

\begin{proposition} \label{prop_adjunction_norm}
Let $\Gamma_d:\Ch_{\geq0}  \to \dSet$ be defined by the formula $$\Gamma_d(C)_T=\Hom_{\Ch_{\geq0}}(\Ch(\Omega[T]), C).$$
Then the pair $(\Ch, \Gamma_d)$ forms an adjunction. 
\end{proposition}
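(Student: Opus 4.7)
The plan is to mimic the proof of Proposition \ref{prop_adjunction}: since $\dSet$ is a presheaf category and $\Ch_{\geq 0}$ is cocomplete, any colimit-preserving functor $\dSet \to \Ch_{\geq 0}$ has a right adjoint, and that right adjoint is automatically given by the formula in the statement (via the Yoneda lemma applied to $\Omega[T]$). So the entire task reduces to verifying that $\Ch$ preserves colimits in the dendroidal set variable.

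By definition we have a short exact sequence of functors $\dSet \to \Ch_{\geq 0}$,
\begin{equation*}
0 \to D(X) \to \Ch^{un}(X) \to \Ch(X) \to 0,
\end{equation*}
so $\Ch(X)$ is the cokernel of $D(X) \hookrightarrow \Ch^{un}(X)$. Cokernels in $\Ch_{\geq 0}$ are themselves colimits, and colimits commute with colimits, so it suffices to show that both $\Ch^{un}$ and $D$ preserve colimits as functors $\dSet \to \Ch_{\geq 0}$. For $\Ch^{un}$ this is precisely Proposition \ref{prop_adjunction}, so the real content is to verify it for $D$.

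For $D$, I would proceed exactly as in the coequalizer argument of Proposition \ref{prop_adjunction}. By Lemma \ref{fact_nondeg} every degenerate dendrex has a canonical expression $\sigma^*(y)$ with $\sigma \colon T \to S$ a composition of elementary degeneracies and $y \in X_S$ non-degenerate, but for this proof it is enough to use the looser description that $D(X)_n$ is the image in $\Ch^{un}(X)_n$ of the subgroup generated by triples $(T,p,\sigma^*y)$ for \emph{some} non-identity composition of elementary degeneracies $\sigma\colon T\to S$ with $|T|=n$ and $y\in X_S$. That subgroup can be written degreewise as a coequalizer of direct sums of the form $\bigoplus \mathbb{Z}\langle X_S\rangle$, indexed by tuples $(T,S,\sigma,p)$ (and, for the parallel pair, additionally by an isomorphism between two such data as in the unnormalized case). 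Because morphisms of dendroidal sets send degenerate dendrices to degenerate dendrices, these assignments are functorial in $X$, and as before the indexing sets are independent of $X$ while $\bigoplus \mathbb{Z}\langle X_S\rangle$ preserves colimits in $X$. Hence $D$ preserves colimits.

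Combining these, $\Ch = \mathrm{coker}(D \hookrightarrow \Ch^{un})$ is a composite of colimit-preserving operations and therefore preserves colimits. The right adjoint is then forced by the Yoneda lemma: for $X = \Omega[T]$ and any chain complex $C$,
\begin{equation*}
\Hom_{\dSet}(\Omega[T], \Gamma_d(C)) = \Gamma_d(C)_T = \Hom_{\Ch_{\geq 0}}(\Ch(\Omega[T]), C),
\end{equation*}
which extends to a natural bijection $\Hom_{\dSet}(X, \Gamma_d(C)) \cong \Hom_{\Ch_{\geq 0}}(\Ch(X), C)$ for arbitrary $X$ by writing $X$ as a colimit of representables. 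The main (and only slightly delicate) obstacle is the bookkeeping for $D$: one must see that the subcomplex of degeneracies is not merely defined pointwise but admits a functorial presentation that commutes with colimits, which is the role of Lemma \ref{fact_nondeg} and the fact that all maps of dendroidal sets preserve degeneracies.
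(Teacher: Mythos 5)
Your overall strategy---reduce to colimit preservation via the presheaf/Yoneda argument and then handle the quotient by the degenerate subcomplex---is the same as the paper's, and the reduction itself together with the appeal to Proposition \ref{prop_adjunction} for $\Ch^{un}$ is fine. The gap is in the middle step, where you reduce further to the claim that the subfunctor $D$ preserves colimits and assert that $D(X)_n$ can be written as a coequalizer of direct sums $\bigoplus \mathbb{Z}\langle X_S\rangle$ indexed by tuples $(T,S,\sigma,p)$, with the parallel pair ``additionally indexed by an isomorphism between two such data as in the unnormalized case.'' That parallel pair does not present $D(X)_n$: a single degenerate dendrex is typically hit by many inequivalent tuples---already a doubly degenerate dendrex $\sigma^*\rho^*z$ arises both from the tuple with the composite degeneracy $\rho\sigma$ applied to the core $z$ and from the tuple $(\sigma,\rho^*z)$---and none of these identifications are instances of your isomorphism relation. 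The relations actually needed are of the form $(\sigma,\rho^*z)\sim(\sigma',\rho'^*z)$ for pairs of factorizations with $\rho\sigma=\rho'\sigma'$, and showing that these generate the whole kernel requires the uniqueness statement of Lemma \ref{fact_nondeg}. The underlying issue is that $D(X)$ is defined as an \emph{image} (the subgroup generated by classes of degenerate dendrices), and images of natural transformations do not commute with colimits for free; your write-up does not supply the missing argument.

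The paper sidesteps this entirely, and you can too with a small rearrangement of ingredients you already have. Consider the visibly colimit-preserving functor
\[
\Xi(X)=\bigoplus_{|T|=n}\;\bigoplus_{\substack{\tau\colon T\to T'\\ \text{degeneracy}}}\;\bigoplus_{p\in P_T}\mathbb{Z}\langle X_{T'}\rangle,
\qquad (T,\tau,p,x)\mapsto [T,p,\tau^*x]\in\Ch^{un}(X)_n .
\]
Its image is exactly $D(X)_n$, so $\Ch(X)_n=\mathrm{coker}\bigl(\Xi(X)\to\Ch^{un}(X)_n\bigr)$. Since a cokernel depends only on the image of the map, and cokernels of natural transformations between colimit-preserving functors are again colimit-preserving, $\Ch$ preserves colimits. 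No presentation of $D$ itself, and no claim that $D$ preserves colimits, is required.
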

\begin{proof}
The proof is similar to the proof of Proposition \ref{prop_adjunction}. We again want to show that the functor
$$ \Ch(X) = \Ch^{un}(X) / D(X)$$
preserves colimits in $X$. 
We consider the following functor
$$
\Xi: \dSet \to \Ab \qquad X \mapsto \bigoplus_{\begin{small}
\begin{array}{c}  |T|= n\end{array}
\end{small}}  
\bigoplus_{\begin{small}
\begin{array}{c} \tau: T \to T' \\ \text{degeneracy}\end{array}
\end{small}}
\bigoplus_{\begin{small}
\begin{array}{c} p\in P_T\end{array}
\end{small}}  \mathbb{Z}\langle X_T' \rangle
$$
Then there is a natural transformation $\Xi \to \Ch^{un}$ given on generators by 
$$ 
(T, \tau, p, x) \mapsto [T,p , \tau^*x]
$$
By definition it is clear that $\Ch(X)$ is the cokernel of $\Xi(X) \to \Ch^{un}(X)$. Thus the fact that everything clearly commutes with colimits shows the claim.
\end{proof}

Recall that the category $\Ch_{\geq0}$ of positively graded chain complexes admits two canonical model structures. The projective one and the injective one. In both the weak equivalences are quasi-isomorphisms. In the injective model structure the cofibrations are all monomorphisms and in the projective one the cofibrations are the monomorphisms with levelwise projective cokernel.

\begin{proposition} \label{gen_cof}
The functor $\Ch: \dSet \to \Ch_{\geq0}$ maps boundary inclusions to cofibrations (in either of the model structures). The same is true for the functor $\Ch^{un}$.
\end{proposition}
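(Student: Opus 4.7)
The plan is to exhibit, in each degree $n$, the map $\Ch^{un}(\partial\Omega[T])_n \to \Ch^{un}(\Omega[T])_n$ as a split inclusion of free abelian groups with free cokernel. Once this is established, the map of chain complexes is automatically a cofibration in both the projective and the injective model structure on $\Ch_{\geq 0}$.

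The first step is to analyze $\Ch^{un}(X)_n$ for a normal dendroidal set $X$. By definition $\Ch^{un}(X)_n$ is the quotient of the free abelian group $C(X)_n$ on triples $(S,p,x)$ with $|S|=n$, $p \in P_S$, and $x \in X_S$ by the signed action of tree isomorphisms; fixing the shape $S$ this is the signed diagonal action of $\Aut(S)$ on $P_S \times X_S$. Normality of $X$ means that $\Aut(S)$ acts freely on $X_S$, and therefore freely on $P_S \times X_S$. In particular, no nontrivial automorphism stabilizes a triple $(S,p,x)$, so the sign $\sgn(p',\tau^*p)$ can never force a relation of the form $[S,p,x] = -[S,p,x]$. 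It follows that $\Ch^{un}(X)_n$ is free abelian with one basis element per $\Aut(S)$-orbit.

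Both $\Omega[T]$ and $\partial\Omega[T]$ are normal, so the preceding analysis applies. A dendrex $f\colon S \to T$ lies in $\partial\Omega[T]_S$ if and only if $f$ factors through an elementary face of $T$, and this property is preserved under precomposition with any isomorphism $\tau\colon S'\to S$. Hence $\partial\Omega[T]_S \subset \Omega[T]_S$ is $\Aut(S)$-stable, and its orbits form a subset of the orbits in $\Omega[T]_S$. Thus the inclusion $\Ch^{un}(\partial\Omega[T])_n \hookrightarrow \Ch^{un}(\Omega[T])_n$ sends a basis injectively onto a subset of a basis, making it a split monomorphism of free abelian groups with free cokernel, which is the desired conclusion.

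For the normalized variant $\Ch = \Ch^{un}/D$, one repeats the argument restricted to non-degenerate dendrices. By the factorization lemma in Section 2, any non-degenerate dendrex $S \hookrightarrow T$ with $|S|<|T|$ factors through some face, and there are no monomorphisms $S \to T$ with $|S|>|T|$; so the unique non-degenerate dendrex of $\Omega[T]$ not in $\partial\Omega[T]$ is the identity, and the cokernel $\Ch(\Omega[T])/\Ch(\partial\Omega[T])$ is a copy of $\mathbb{Z}$ concentrated in degree $|T|$. The one subtle point is the freeness assertion for $\Ch^{un}(X)_n$ when $X$ is normal, which is precisely where the hypothesis of normality is needed.
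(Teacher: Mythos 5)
Your proof is correct and follows essentially the same route as the paper: the boundary inclusion is a levelwise monomorphism compatible with the defining relation, hence induces an inclusion of basis elements of free abelian groups with free cokernel, which is a cofibration in both model structures. The only difference is that you spell out (via normality and the free $\Aut(S)$-action) why the quotients are actually free on orbits, a point the paper's one-line proof leaves implicit.
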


\begin{proof} 
Let $i: \partial\Omega[S] \to \Omega[S]$ be a boundary inclusion. Because $\partial\Omega[S]_T  \to \Omega[S]_T$ are monomorphisms compatible with the relation (\ref{relation}), the induced maps $\Ch(i)_n$ and $\Ch^{un}(i)_n$ are monomorphisms between free abelian groups given by inclusion of generators. Hence their cokernels are also free.  
\end{proof}

\begin{corollary}
The natural map $ \Ch^{un}(X) \to \Ch(X)$ is a quasi-isomorphism for every normal dendroidal set $X$. 
\end{corollary}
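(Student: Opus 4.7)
The natural map in question is the quotient $\Ch^{un}(X) \twoheadrightarrow \Ch(X)$ with kernel $D(X)$, so quasi-isomorphism is equivalent to acyclicity of $D(X)$. Lemma \ref{AcycDeg} provides such acyclicity, but only under the hypothesis that every non-degenerate dendrex of $X$ induces a monomorphism of dendroidal sets; this hypothesis is \emph{not} satisfied by arbitrary normal dendroidal sets (already $\Delta[1]/\partial\Delta[1]$ is a counterexample in the simplicial case). So a direct appeal to Lemma \ref{AcycDeg} will not work, and the plan is instead to invoke Lemma \ref{LemmaEq} to reduce to the representable case, where Lemma \ref{AcycDeg} does apply.

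First I would verify the hypotheses of Lemma \ref{LemmaEq} for the natural transformation $\alpha \colon \Ch^{un} \Rightarrow \Ch$ (the quotient by $D$), with target the category $\Ch_{\geq 0}$ equipped with, say, the projective model structure (whose weak equivalences are quasi-isomorphisms). By Propositions \ref{prop_adjunction} and \ref{prop_adjunction_norm} both $\Ch^{un}$ and $\Ch$ are left adjoints. By Proposition \ref{gen_cof} both functors send boundary inclusions to cofibrations, and since cofibrations are closed under pushouts and transfinite composition, Proposition \ref{boundary normal} upgrades this to: both functors send \emph{all} normal monomorphisms to cofibrations. This puts us in the situation of Lemma \ref{LemmaEq}.

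Next, it suffices to check that $\alpha_{\Omega[T]} \colon \Ch^{un}(\Omega[T]) \to \Ch(\Omega[T])$ is a quasi-isomorphism for every tree $T$. This in turn reduces to showing $D(\Omega[T])$ is acyclic, for which I apply Lemma \ref{AcycDeg}. The point is that the hypothesis of \ref{AcycDeg} \emph{is} automatic for representables: by the Yoneda lemma, a dendrex $x \in \Omega[T]_S$ is simply a morphism $x \colon S \to T$ in $\Omega$, and the factorization lemma (Lemma 2.1) expresses $x$ uniquely as a composition of elementary face maps, followed by an isomorphism, followed by elementary degeneracy maps. The dendrex $x$ is non-degenerate exactly when the degeneracy part of this factorization is trivial, that is, when $x$ is a composition of face maps with an isomorphism. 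All such morphisms are monomorphisms in $\Omega$, and the Yoneda embedding sends monomorphisms in $\Omega$ to monomorphisms of representable presheaves, so the associated map $\Omega[S] \to \Omega[T]$ is a monomorphism. Lemma \ref{AcycDeg} then gives that $D(\Omega[T])$ is acyclic, hence $\alpha_{\Omega[T]}$ is a quasi-isomorphism, and Lemma \ref{LemmaEq} concludes the argument for arbitrary normal $X$.

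The only mildly delicate step is the one pinpointed above: correctly identifying the hypothesis of Lemma \ref{AcycDeg} as the automatic consequence of Lemma 2.1 for representables, and resisting the temptation to apply Lemma \ref{AcycDeg} directly to every normal dendroidal set. Everything else amounts to unfolding the previously established adjunctions and closure properties.
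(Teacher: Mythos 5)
Your proposal is correct and follows essentially the same route as the paper: apply Lemma \ref{AcycDeg} to the representables $\Omega[T]$ and then transfer to arbitrary normal $X$ via Propositions \ref{prop_adjunction}, \ref{prop_adjunction_norm}, \ref{gen_cof} and Lemma \ref{LemmaEq}. Your extra care in checking that the monomorphism hypothesis of Lemma \ref{AcycDeg} holds for representables (via Yoneda and the factorization lemma) is a detail the paper leaves implicit, and your observation that the lemma cannot be applied directly to all normal dendroidal sets is accurate.
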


\begin{proof}
By Lemma \ref{AcycDeg}, $D(\Omega[T])$ is acyclic for every tree $T$. 
Hence, the natural maps 
\[\Ch^{un}(\Omega[T]) \to \Ch(\Omega[T])\] are quasi-isomorphisms.  Proposition \ref{prop_adjunction}, Proposition \ref{prop_adjunction_norm}, Proposition \ref{gen_cof} and Lemma \ref{LemmaEq} imply the result. 
\end{proof}

\begin{proposition} \label{gen_triv_cof}
The functor $\Ch\colon \dSet \to \Ch_{\geq0}$ maps 
horn inclusions $\Lambda^a[T]\to \Omega[T]$ to trivial cofibrations (in either of the model structures). 
The same is true for the functor $\Ch^{un}$.
\end{proposition}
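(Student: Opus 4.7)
The plan is to exhibit the quotient $Q := \Ch(\Omega[T])/\Ch(\Lambda^a[T])$ as a two-term acyclic chain complex, thereby promoting the already-known cofibration to a trivial cofibration via the long exact sequence in homology. For the cofibration part, I would argue that $\Ch$ sends every normal monomorphism to a cofibration: the class of monomorphisms in $\Ch_{\geq 0}$ with degreewise free cokernel is closed under pushouts and transfinite compositions, and the left adjoint $\Ch$ takes boundary inclusions to such maps by Proposition~\ref{gen_cof}. By Proposition~\ref{boundary normal} this extends to all normal monomorphisms, in particular to $\Lambda^a[T] \hookrightarrow \Omega[T]$ (which is normal because $\Omega[T]$ is).

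To compute $Q$, I would identify the non-degenerate dendrices of $\Omega[T]$ not lying in $\Lambda^a[T]$: these correspond, up to isomorphism, to monomorphisms $\phi: S \to T$ in $\Omega$ that do not factor through any $\partial_b T \to T$ with $b \neq a$. The identity $\phi = \id_T$ is clearly one; the elementary face $\partial_a$ is another. I claim these are the only ones. Indeed, any other such $\phi$ is a composition of at least two elementary face maps and hence factors as $\phi = \partial_b \circ \psi$ for some elementary face $\partial_b$ of $T$; if $b \neq a$ we are in $\Lambda^a[T]$, and if $b = a$, factoring $\psi = \partial_g \circ \psi'$ further and invoking the dendroidal identity $\partial_g \partial_a = \partial_{f'} \partial_{g'}$ with $g' \neq a$ rewrites $\phi$ through $\partial_{g'} T$, again placing $\phi$ in $\Lambda^a[T]$. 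Hence $Q$ has only two generators, $[T]$ in degree $|T|$ and $[\partial_a T]$ in degree $|T|-1$; the induced differential is $d[T] = \sgn_p(\partial_a)[\partial_a T] = \pm[\partial_a T]$, all other terms $\sgn_p(\partial_b)[\partial_b T]$ with $b \neq a$ vanishing modulo $\Ch(\Lambda^a[T])$. Thus $Q \cong \bigl(\mathbb{Z} \xrightarrow{\pm 1} \mathbb{Z}\bigr)$ is acyclic and $\Ch(\Lambda^a[T]) \to \Ch(\Omega[T])$ is a quasi-isomorphism.

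For $\Ch^{un}$, the statement transfers via the commutative square comparing $\Ch^{un}$ and $\Ch$: since $\Omega[T]$ and $\Lambda^a[T]$ are normal, the vertical maps $\Ch^{un}(-) \to \Ch(-)$ are quasi-isomorphisms by the corollary following Lemma~\ref{AcycDeg}, and two-out-of-three finishes the argument. The main obstacle is the combinatorial claim above; unlike the simplicial setting (where dendrices are subsets of vertices), the reduction to just $T$ and $\partial_a T$ relies crucially on the $g' \neq f$ clause of the dendroidal identity, which remains valid even in the exceptional case (e.g.\ an inner edge paired with an attached top vertex, where $g' \neq g$ and $f' \neq f$).
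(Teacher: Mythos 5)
Your proposal is correct and follows essentially the same route as the paper: both reduce to the observation that $\Ch(\Omega[T])$ differs from $\Ch(\Lambda^a[T])$ only by the two generators $[\id_T]$ and $[\partial_a]$, with $d[\id_T]\equiv \pm[\partial_a]$ modulo the horn, so the inclusion is a quasi-isomorphism (the paper verifies this degree by degree where you pass to the acyclic two-term quotient and use the long exact sequence). You in fact supply details the paper leaves implicit, namely the dendroidal-identity argument identifying the missing generators, the cofibration property for horn inclusions, and the explicit reduction of the $\Ch^{un}$ case to the normalized one.
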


\begin{proof} 
By Proposition \ref{gen_cof}, it is enough to show that the functor $\Ch$ sends a horn inclusion $i: \Lambda^a[T]\to \Omega[T]$ to a quasi-isomorphism. Let $|T|=n$. Then $\Ch(\Lambda^a[T])_k\to \Ch(\Omega[T])_k$ is an isomorphism for $0\leq  k\leq n-2$. Hence, $H_k(i)$ is an isomorphism for $0\leq k \leq n-3$. 

Note that $\Ch(\Lambda^a[T])_{n-1}$ is a subgroup of $\Ch(\Omega[T])_{n-1}$ generated by all but one generator, let us denote it $[x_a]$,  of $\Ch(\Omega[T])_{n-1}$. The group $\Ch(\Lambda^a[T])_n$ is trivial and $\Ch(\Omega[T])_n$ is generated by one element, call it $[x]$. Then $[x_a]-d([x])$ is in $\Ch(\Lambda^a[T])_{n-1}$, so $d([x_a])$ is in $d(\Ch(\Lambda^a[T])_{n-1})$. This implies that $H_{n-2}(i)$ is an isomorphism. 
Also,  the fact that $[x_a]-d([x])$ is an element of $\Ch(\Lambda^a[T])_{n-1}$ implies $H_{n-1}(i)$ and $H_n(i)$ are isomorphisms. 
\end{proof}

From Lemma \ref{sep18}, Proposition \ref{gen_cof} and Proposition \ref{gen_triv_cof} we have the following immediate consequence.

\begin{corollary}
The adjunctions $(\Ch, \Gamma_d)$ and $(\Ch^{un}, \Gamma_d^{un})$ are Quillen adjunctions between the category of dendroidal sets with the stable model structure and the category of non-negatively graded chain complexes with either the projective or the injective model structure.
\end{corollary}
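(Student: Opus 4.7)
The plan is to invoke Lemma \ref{sep18}, which gives a convenient criterion for a left adjoint out of $\dSet$ to be left Quillen with respect to the stable model structure: one need only verify that the functor sends boundary inclusions to cofibrations and horn inclusions to trivial cofibrations. Propositions \ref{prop_adjunction} and \ref{prop_adjunction_norm} have already established that $(\Ch^{un}, \Gamma_d^{un})$ and $(\Ch, \Gamma_d)$ are honest adjunctions, so the remaining task is simply to verify these two conditions for the left adjoints $\Ch^{un}$ and $\Ch$.

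First I would invoke Proposition \ref{gen_cof}, which asserts precisely that both $\Ch^{un}$ and $\Ch$ send boundary inclusions $\partial\Omega[T] \to \Omega[T]$ to cofibrations in $\Ch_{\geq 0}$, with respect to either the projective or the injective model structure. Next I would apply Proposition \ref{gen_triv_cof}, which gives the analogous statement for horn inclusions $\Lambda^a[T] \to \Omega[T]$: they are sent to trivial cofibrations in either model structure. Combining these two facts with Lemma \ref{sep18} immediately yields that $\Ch$ and $\Ch^{un}$ are left Quillen functors, whence the two adjunctions are Quillen adjunctions.

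Since the genuine technical work has already been carried out in the preceding propositions, I do not anticipate any real obstacle at this stage; the proof is essentially a one-line assembly of previously established results. The only point worth noting is that the statement covers both the projective and the injective model structures on $\Ch_{\geq 0}$ in a single stroke. This is unproblematic because Proposition \ref{gen_cof} produces monomorphisms with degreewise free (and therefore projective) cokernels, which are cofibrations in both model structures, and Proposition \ref{gen_triv_cof} additionally ensures that the maps induced by horn inclusions are quasi-isomorphisms, which is the class of weak equivalences shared by both model structures.
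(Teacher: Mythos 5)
Your proof is correct and follows exactly the paper's argument: the corollary is stated there as an immediate consequence of Lemma \ref{sep18}, Proposition \ref{gen_cof} and Proposition \ref{gen_triv_cof}, which is precisely the assembly you describe.
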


\begin{remark}
The last result in particular implies the following fact: given a chain complex $C$ the classical Dold-Kan correspondence associates to it a simplicial set $\Gamma(C)$ (in fact a simplicial abelain group). 
From our constructions it follows that $\Gamma(C)$ underlies the dendroidal set $\Gamma_d(C)$ (which is in fact a dendroidal abelian group) and that $\Gamma_d(C)$ is fully Kan. This observation can be promoted to a 
dendroidal Dold-Kan correspondence (slightly different in spirit to the one in \cite{GLW} which only works for planar dendroidal sets).
\end{remark}

\begin{definition}
For a dendroidal set $X$ define the homology and cohomology groups with values in an abelian group $A$ as  
$$ H_n(X, A) := H_n(\Ch^{un}\tilde X \otimes A) \qquad \text{and} \qquad H^n(X,A) := H_n( \Hom(\Ch^{un}\tilde X, A)) . $$
where $\tilde X \to X$ is a normal, i.e. cofibrant replacement of $X$. We will write $H_n(X)$ for $H_n(X,\Z).$ 
\end{definition}

\begin{remark}
For a dendroidal set of the form $i_! S$ where $S$ is a simplicial set the chain complex $Ch^{un}(i_! S)$ agrees with the unnormalized chain complex of $S$. Since $i_!S$ is normal, we have 
$$H_n(i_! S,A) \cong H_n(S,A) \qquad \text{and} \qquad H^n(i_! S,A) \cong H^n(S,A)\ .$$
\end{remark}

\begin{corollary}\label{cor_homologyequi}
If $f: X \to Y$ is a stable equivalence of dendroidal sets, then it induces an isomorphism $f_*: H_n(X,A) \to H_n(Y,A)$.
\end{corollary}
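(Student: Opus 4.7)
The plan is to reduce the statement to three facts: (i) any stable equivalence $f\colon X\to Y$ lifts to a stable equivalence $\tilde f\colon \tilde X\to \tilde Y$ between normal (cofibrant) replacements; (ii) the left Quillen functor $\Ch^{un}$ sends $\tilde f$ to a quasi-isomorphism; (iii) tensoring with any abelian group $A$ preserves this quasi-isomorphism.

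For (i), I would use the standard model-categorical argument: given cofibrant replacements $q_X\colon \tilde X\to X$ and $q_Y\colon \tilde Y\to Y$ (which are acyclic fibrations in the stable model structure of Theorem \ref{thm_equivalence}), a lift $\tilde f$ of $f$ along $q_Y$ exists by the lifting axiom since $\tilde X$ is cofibrant, is unique up to left homotopy, and is a stable equivalence by two-out-of-three. Applying the same argument to two different choices of cofibrant replacement also shows that $H_n(X,A)$ and $f_*$ are well-defined up to canonical isomorphism.

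For (ii), I would invoke Ken Brown's lemma applied to the left Quillen adjunction $(\Ch^{un},\Gamma_d^{un})$ established in the preceding corollary: a left Quillen functor sends weak equivalences between cofibrant objects to weak equivalences. Since $\tilde X$ and $\tilde Y$ are normal, hence cofibrant, and $\tilde f$ is a stable equivalence, $\Ch^{un}(\tilde f)\colon \Ch^{un}(\tilde X)\to \Ch^{un}(\tilde Y)$ is a quasi-isomorphism.

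The main point I anticipate as the technical obstacle is (iii), but this can be handled by a mapping cone argument. Both $\Ch^{un}(\tilde X)$ and $\Ch^{un}(\tilde Y)$ are bounded-below complexes of free abelian groups, as is explicit from the construction in Section \ref{sectionbla}, or equivalently because they are cofibrant in the projective model structure on $\Ch_{\geq 0}$. A bounded-below acyclic complex of free abelian groups is chain contractible (by a standard inductive splitting argument using projectivity), so the mapping cone of $\Ch^{un}(\tilde f)$ is chain contractible. Since tensoring with $A$ commutes with the mapping cone construction and preserves chain contractions, the mapping cone of $\Ch^{un}(\tilde f)\otimes A$ is chain contractible, hence acyclic. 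This means $\Ch^{un}(\tilde f)\otimes A$ is a quasi-isomorphism, i.e.\ $f_*\colon H_n(X,A)\to H_n(Y,A)$ is an isomorphism.
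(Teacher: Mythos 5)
Your proof is correct and follows exactly the route the paper intends: the paper states this corollary without proof as an immediate consequence of the preceding result that $(\Ch^{un},\Gamma_d^{un})$ is a Quillen adjunction, relying on Ken Brown's lemma and the definition of $H_n(-,A)$ via cofibrant replacement. Your step (iii), handling the tensor with $A$ via contractibility of the acyclic mapping cone of free abelian groups, correctly fills in a detail the paper leaves implicit.
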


Note that we will show in Corollary \ref{equiv} that the converse of that statement is also true.

\begin{corollary}\label{cor_comm}
For the terminal dendroidal set $\ast$ we have $H_k(\ast) = 0$ for all $k$.
\end{corollary}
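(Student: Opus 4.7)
The plan is to reduce this to computing the homology of $\emptyset$, which is trivially zero, by exhibiting $\emptyset \to \ast$ as a stable equivalence and then invoking Corollary \ref{cor_homologyequi}.

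To justify that $\emptyset \to \ast$ is a stable equivalence, I would appeal to Theorem \ref{thm_equivalence}: the stable model structure on $\dSet$ is Quillen equivalent to the category $\Sp^{\geq 0}$ of connective spectra. Since $\Sp^{\geq 0}$ is pointed (indeed, stable), its zero spectrum is simultaneously initial and terminal, and hence the underlying $\infty$-category $\dSet_\infty$ is pointed as well. Under the equivalence $\dSet_\infty \simeq \Sp^{\geq 0}$, the initial object $\emptyset$ and the terminal object $\ast$ of $\dSet$ must correspond to the initial and terminal objects of $\Sp^{\geq 0}$, respectively, both of which are the zero spectrum. Consequently $\emptyset$ and $\ast$ represent the same object in $\mathrm{Ho}(\dSet)$, and the canonical map $\emptyset \to \ast$ (the unique morphism out of the initial object) represents an isomorphism in the homotopy category, i.e.\ it is a stable weak equivalence.

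Applying Corollary \ref{cor_homologyequi} to this stable equivalence yields $H_n(\ast) \cong H_n(\emptyset)$ for all $n$. The empty dendroidal set is vacuously normal, and since $\emptyset_T = \emptyset$ for every tree $T$, the group $C(\emptyset)_n$ of formula \eqref{def_c} is trivial. Hence $\Ch^{un}(\emptyset) = 0$, so $H_n(\emptyset) = 0$, and therefore $H_n(\ast) = 0$ for all $n$.

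The main obstacle is the identification of the stable homotopy type of $\ast$ as the zero object, which ultimately rests on the non-trivial Quillen equivalence from Theorem \ref{thm_equivalence}; a more hands-on alternative would be to construct an explicit normal replacement $\tilde\ast \to \ast$ and compute the homology directly, which appears substantially more involved.
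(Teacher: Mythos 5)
Your proof is correct and follows essentially the same route as the paper: both arguments use the equivalence with connective spectra (Theorem \ref{thm_equivalence}) to conclude that the stable homotopy category of dendroidal sets is pointed, hence $\emptyset \to \ast$ is a stable weak equivalence, and then reduce to the trivially vanishing homology of $\emptyset$ via Corollary \ref{cor_homologyequi}. Your write-up merely spells out a few details (normality of $\emptyset$ and the vanishing of $C(\emptyset)_n$) that the paper leaves implicit.
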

\begin{proof}
Since the homotopy category of dendroidal sets with respect to the stable model structure is equivalent to the homotopy category of connective spectra, it follows that it is pointed, i.e.
that the initial object is isomorphic to the terminal object. This means that the canonical morphism $\emptyset \to \ast$ is a stable weak equivalence. Thus we conclude that the homology of $\ast$ is isomorphic
to the homology of $\emptyset$ which is clearly zero in all degrees.
\end{proof}

\begin{corollary} \label{cor_eins}
The homology of $\Omega[T]$ is given by 
\begin{equation*}
H_k(\Omega [T])=\left\{ 
\begin{array}{ll} \mathbb{Z}\langle \ell(T) \rangle & \textrm{if } k= 0, \\ 0 & \textrm{if } k \neq 0. \end{array} 
\right.
\end{equation*}
%where $\ell_T$ is the set of leaves of the tree $T$.
\end{corollary}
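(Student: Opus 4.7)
The plan is to reduce the computation to $\eta$ by using the stable equivalence $\bigsqcup_{\ell(T)} \eta \to \Omega[T]$ (the inclusion of the leaves) that was already invoked in the proof of Corollary \ref{15sep2} as a stable trivial cofibration. By Corollary \ref{cor_homologyequi}, this stable equivalence induces an isomorphism
\[
H_n\!\left(\bigsqcup_{\ell(T)} \eta\right) \;\stackrel{\cong}{\longrightarrow}\; H_n(\Omega[T])
\]
for every $n$, so it suffices to compute the left-hand side.

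Next I would compute $H_*(\eta)$ directly. The tree $\eta = \Omega[L_0]$ has a unique non-degenerate dendrex, namely the identity on the single edge, and every other dendrex is degenerate (morphisms $T\to L_0$ in $\Omega$ exist only when $T$ is linear, and all of them factor through iterated degeneracies of the identity). Moreover $\eta$ is normal since $\Aut(L_0)$ is trivial. Hence the normalized complex $\Ch(\eta)$ is $\mathbb{Z}$ concentrated in degree $0$, and by Lemma \ref{AcycDeg} the quasi-isomorphism $\Ch^{un}(\eta) \to \Ch(\eta)$ gives $H_0(\eta) = \mathbb{Z}$ and $H_k(\eta) = 0$ for $k>0$.

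Finally I would assemble the pieces. The coproduct $\bigsqcup_{\ell(T)} \eta$ is again normal, so it is its own cofibrant replacement. Since $\Ch^{un}$ is a left adjoint by Proposition \ref{prop_adjunction}, it preserves coproducts, and coproducts in $\Ch_{\geq 0}$ are direct sums, so
\[
\Ch^{un}\!\left(\bigsqcup_{\ell(T)} \eta\right) \;\cong\; \bigoplus_{\ell(T)} \Ch^{un}(\eta).
\]
Taking homology commutes with direct sums, which yields $H_0 = \mathbb{Z}^{\ell(T)}$ and $H_k = 0$ for $k>0$ on the left-hand side. Combining this with the isomorphism from Corollary \ref{cor_homologyequi} gives the stated formula for $H_*(\Omega[T])$.

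The one step that is not entirely mechanical is the assertion that the leaf inclusion $\bigsqcup_{\ell(T)} \eta \to \Omega[T]$ is a stable equivalence. The authors appeal to this without proof in the argument for Corollary \ref{15sep2}; it reflects the fact that under the Quillen equivalence with connective spectra of Theorem \ref{thm_equivalence}, $\eta$ corresponds to the sphere and $\Omega[T]$ corresponds to a wedge of $\ell(T)$ copies of it. Everything else in the proof is formal: normality of representables-plus-coproducts, preservation of coproducts by the left adjoint $\Ch^{un}$, and the explicit acyclicity of $D(\eta)$ already proved in Lemma \ref{AcycDeg}.
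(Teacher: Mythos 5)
Your proposal is correct and follows exactly the paper's route: the paper's entire proof is the observation that the leaf inclusion $\bigsqcup_{\ell(T)}\eta \to \Omega[T]$ is a stable trivial cofibration together with an appeal to Corollary \ref{cor_homologyequi}. You simply spell out the remaining details (the computation of $\Ch^{un}(\eta)$, normality of the coproduct, and preservation of coproducts by the left adjoint) that the authors leave implicit.
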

\begin{proof} The morphism \[\bigsqcup_{\ell(T)} \eta \to \Omega[T]\] is a stable trivial cofibration, so the result follows from Corollary \ref{cor_homologyequi}. 
\end{proof}

\begin{corollary} \label{cor_zwei}

 Let $T$ be a tree with $n$ vertices. Then we have
\begin{equation*}
H_k(\Omega [T]/\partial\Omega[T])=\left\{ 
\begin{array}{ll} \Z, & \textrm{if } k=n \\ 0, &  \textrm{if } k \neq n \end{array} 
\right.
\end{equation*}
\end{corollary}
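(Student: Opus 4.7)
The plan is to compute the cokernel chain complex $K := \Ch^{un}(\Omega[T]) / \Ch^{un}(\partial\Omega[T])$ explicitly, and then to identify $H_*(K)$ with $H_*(\Omega[T]/\partial\Omega[T])$ by an acyclicity argument based on Corollary \ref{cor_comm}. By Proposition \ref{gen_cof}, $\Ch^{un}$ sends the boundary inclusion to a cofibration of chain complexes, so we have a short exact sequence
\[
0 \to \Ch^{un}(\partial\Omega[T]) \to \Ch^{un}(\Omega[T]) \to K \to 0.
\]

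First I would compute $K$ directly, using the unique factorization of morphisms in $\Omega$ as face followed by iso followed by degeneracy. Any morphism $f\colon S \to T$ with $|S| < n$ must contain at least one proper elementary face map in its factorization, so it factors through $\partial\Omega[T]$, giving $\partial\Omega[T]_S = \Omega[T]_S$ and hence $K_k = 0$ for $k < n$. For $|S| = n$, the same factorization forces $f$ to be an isomorphism, so the non-boundary generators in degree $n$ are precisely the triples $[T, p, \phi]$ with $\phi \in \Aut(T)$; by the sign relation defining $\Ch^{un}$ these all collapse (up to sign) to the single free generator $[T, p, \mathrm{id}]$. The differential on $K$ vanishes because $d[T, p, \mathrm{id}]$ is a signed sum of face dendrices, each of which lies in $\Ch^{un}(\partial\Omega[T])$. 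Higher degrees vanish for size reasons, and hence $H_n(K) = \mathbb{Z}$ and $H_k(K) = 0$ for $k \neq n$.

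Finally I would identify $H_*(K)$ with $H_*(\Omega[T]/\partial\Omega[T])$. Fixing a cofibrant replacement $\tilde\ast \to \ast$ in the stable model structure, the pushout $\tilde Q := \Omega[T] \cup_{\partial\Omega[T]} \tilde\ast$ is cofibrant (as a pushout of cofibrant objects along a cofibration) and by left properness of the stable model structure (Theorem \ref{thm_equivalence}) maps by a weak equivalence to $Q := \Omega[T]/\partial\Omega[T]$; so $\tilde Q$ is a cofibrant replacement of $Q$. Since $\Ch^{un}$ preserves colimits and the map $\Ch^{un}(\partial\Omega[T]) \to \Ch^{un}(\Omega[T])$ is injective, one obtains a short exact sequence
\[
0 \to \Ch^{un}(\tilde\ast) \to \Ch^{un}(\tilde Q) \to K \to 0.
\]
Since $\Ch^{un}(\tilde\ast)$ is acyclic by Corollary \ref{cor_comm}, the associated long exact sequence collapses to $H_*(Q) = H_*(\tilde Q) \cong H_*(K)$, which is the claimed statement. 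The main obstacle will be this last step: $\Omega[T]/\partial\Omega[T]$ is typically not normal, so its homology must be computed through a cofibrant replacement rather than the strict quotient, and the key idea is to route around this by comparing with the acyclic complex $\Ch^{un}(\tilde\ast)$.
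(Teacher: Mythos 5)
Your overall strategy is essentially the paper's: both arguments reduce the computation to the strict quotient complex $\Ch(\Omega[T])/\Ch(\partial\Omega[T])$ by exploiting that the chain complex of a cofibrant replacement of $\ast$ is acyclic. The paper phrases this via a homotopy pushout square (obtained by multiplying the square with a cofibrant resolution of $\ast$), while you use the gluing lemma for the left proper stable model structure together with the short exact sequence
$0 \to \Ch^{un}(\tilde\ast) \to \Ch^{un}(\tilde Q) \to K \to 0$
and its long exact homology sequence; this part of your argument is correct and, if anything, slightly more explicit than the paper's. Your computation of $K$ in degrees $<n$ and in degree $n$ is also correct.

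The genuine gap is the claim that ``higher degrees vanish for size reasons.'' For the \emph{unnormalized} complex this is false: $\Ch^{un}(\Omega[T])_k$ with $k>n$ is generated by the isomorphism classes of \emph{all} dendrices $S\to T$ with $|S|=k$, including degenerate ones, and a degenerate dendrex whose non-degenerate core is an isomorphism onto $T$ — for instance any elementary degeneracy $\sigma_e\colon \sigma_e T\to T$, an $(n+1)$-vertex dendrex of $\Omega[T]$ — is surjective and hence does not factor through any face $\partial_f T\to T$. Such dendrices therefore do not lie in $\partial\Omega[T]$, so $K_{n+1}\neq 0$ and $K$ is not concentrated in degree $n$. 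The statement you want holds for the \emph{normalized} complex $\Ch=\Ch^{un}/D$: there the generators are non-degenerate dendrices only, every non-degenerate dendrex of $\Omega[T]$ is a monomorphism from a tree with at most $n$ vertices, and the non-degenerate dendrices of $\partial\Omega[T]$ and $\Omega[T]$ agree except in degree $n$ — which is exactly how the paper concludes. Since $\Ch^{un}(X)\to\Ch(X)$ is a quasi-isomorphism for normal $X$ (the corollary following Proposition \ref{gen_cof}), the cleanest repair is to run your entire argument with $\Ch$ in place of $\Ch^{un}$; otherwise you would need an additional acyclicity argument, in the spirit of Lemma \ref{AcycDeg}, for the degenerate part $D(\Omega[T])/D(\partial\Omega[T])$ of $K$ sitting in degrees above $n$.
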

\begin{proof}
We first consider the following pushout square
$$
\xymatrix{
\partial \Omega[T] \ar[d] \ar[r] & \Omega[T] \ar[d] \\
\ast \ar[r] &  \Omega[T] / \partial \Omega[T]
}
$$ 
This square is a homotopy pushout square which can be seen as follows: take the product of the whole square with a cofibrant resolution of $\ast$. Then we get another square in which all corners are
cofibrant and which is a pushout since $\dSet$ is Cartesian closed. This new square is a homotopy pushout since the upper horizontal morphism is a cofibration. But all corners are equivalent to the corners in the starting square, 
this shows that the starting square is also a homotopy pushout square. It follows that we have a homotopy pushout square of chain complexes
$$
\xymatrix{
\Ch( \partial \Omega[T]) \ar[d] \ar[r] & \Ch( \Omega[T]) \ar[d] \\
\Ch(\tilde\ast) \ar[r] &  \Ch(\widetilde{\Omega[T] / \partial \Omega[T]})
}
$$ 
Now we have that $\Ch(\tilde \ast)$ is quasi isomorphic to the zero chain complex by Corollary \ref{cor_comm}. Thus we find that $\Ch(\widetilde{\Omega[T] / \partial \Omega[T]})$ is quasi-isomorphic to the 
homotopy cofibre of the morphism $\Ch( \partial \Omega[T]) \to\Ch (\Omega[T])$. Since this morphism is a monomorphism of chain complexes, the homotopy cofibre is quasi-isomorphic to the quotient. 

This quotient as a chain complex is completely concentrated in degree $n$, since the non-degenerate cells of $\partial \Omega[T]$ and $\Omega[T]$ agree in all other degrees.
\end{proof}

%%%%%%%%%%%%%%%%%%%%%%%%%%%%%%%%%%%%%%%%%%%%%%%%%%%%%%%%%%%%%%%%%%%%%%%%%%%%%%%%%%%%%%%%%
\section{The associated spectrum and its homology}
%%%%%%%%%%%%%%%%%%%%%%%%%%%%%%%%%%%%%%%%%%%%%%%%%%%%%%%%%%%%%%%%%%%%%%%%%%%%%%%%%%%%%%%%%%%%%%%%%%%%%%%%%%%%%%%%%%%%%%%%%%%%%%%%%%%%%%%%%%%%%%%%%%%%%%%%%%%%%%%%%%%%%%%%%%%%%%%%%%%%%%%%%%%%%%%%%%%%%%%%%%%%%%%%

In this section we will compare the homology of a dendroidal set to the homology of the associated connective spectrum. 
Recall that for a spectrum $E$, its $n$-th homology group with coefficients in an abelian group $A$ is defined as the $n$-th homotopy group of the spectrum $E\wedge HA$, where $HA$ is the Eilenberg-Maclane spectrum of $A$.  
The cohomology groups of $E$ are defined as the negative homotopy groups of the mapping spectrum $HA^E$.

\begin{thm}\label{mainthm}
Let $D$ be a dendroidal set. Then the homology groups $H_*(D,A)$ are naturally isomorphic to the homology groups with values in $A$ of the associated connective spectrum $\mathcal{K}(D)$. The cohomology groups $H^*(D,A)$ are isomorphic to the 
cohomology groups of $\mathcal{K}(D)$.
\end{thm}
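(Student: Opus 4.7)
The plan is to apply Proposition \ref{prop_cool} to identify both sides of the theorem as values of the same left adjoint $\infty$-functor on $\dSet_\infty$. Concretely, I would set up two left adjoint $\infty$-functors $F, G : \dSet_\infty \to \calC$, where $\calC$ is the $\infty$-category of connective $H\Z$-module spectra (equivalently, the connective part of the derived $\infty$-category of abelian groups). The functor $F$ is induced by the Quillen adjunction $(\Ch^{un}, \Gamma_d^{un})$ established in Proposition \ref{prop_adjunction}, composed with the canonical functor from $\Ch_{\geq 0}$ to $\calC$. The functor $G$ is the composite
\[
\dSet_\infty \xrightarrow{\calK} \Sp^{\geq 0} \xrightarrow{-\wedge H\Z} \calC,
\]
where $\calK$ is (part of) the equivalence of Theorem \ref{thm_equivalence} and $-\wedge H\Z$ is the standard left adjoint to the forgetful functor. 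Both target $\calC$, which is presentable and additive, so Proposition \ref{prop_cool} is applicable.

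To verify the hypothesis of Proposition \ref{prop_cool}, I need $F(\eta) \simeq G(\eta)$. The dendroidal set $\eta$ has exactly one non-degenerate dendrex, so $\Ch(\eta) \simeq \Z$ concentrated in degree zero and hence $F(\eta) \simeq \Z[0]$. For $G$, we have $\calK(\eta) \simeq \mathbb{S}$ (the identification recalled in the proof of Proposition \ref{prop_cool}), so $G(\eta) = \mathbb{S} \wedge H\Z \simeq H\Z$, which corresponds to $\Z[0]$ in $\calC$. Proposition \ref{prop_cool} then delivers a natural equivalence $F(D) \simeq G(D)$ in $\calC$ for every dendroidal set $D$. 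Taking the $n$-th homotopy group yields
\[
H_n(D,\Z) \cong \pi_n F(D) \cong \pi_n G(D) = \pi_n(\calK(D) \wedge H\Z) = H_n(\calK(D),\Z),
\]
which settles the integral case.

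For general coefficients $A$, I would proceed in either of two ways: (i) rerun the argument with $\calC$ replaced by the $\infty$-category of connective $HA$-module spectra and $F$ replaced by $D \mapsto \Ch^{un}(\tilde D) \otimes A$, once again checking agreement on $\eta$, where both sides evaluate to $A[0]$; or (ii) observe that the equivalence $F(D) \simeq G(D)$ is already an equivalence of connective $H\Z$-modules, so smashing with $HA$ over $H\Z$ (equivalently, deriving $-\otimes_\Z A$) gives $\Ch^{un}(\tilde D) \otimes^L A \simeq \calK(D) \wedge HA$; taking $\pi_n$ yields $H_n(D,A) \cong H_n(\calK(D),A)$. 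The cohomology claim then follows by applying $\mathrm{Map}_\calC(-,A[0])$ on one side and $\mathrm{Map}_\Sp(-,HA)$ on the other, using the equivalence $F \simeq G$ in the source. The main obstacle I expect is the $\infty$-categorical bookkeeping required to present $G$ as a genuine left adjoint $\infty$-functor with the prescribed target $\calC$, and to match up the Eilenberg-MacLane picture with the chain-complex picture on $\eta$; once that setup is honest, Proposition \ref{prop_cool} does essentially all the remaining work because the single input it requires is the identification of $\eta$ with the unit on both sides.
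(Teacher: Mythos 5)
Your proposal is correct and follows essentially the same route as the paper: both identify the two sides as left adjoint $\infty$-functors into connective $H\Z$-modules (via Dold--Kan on the chain-complex side and $-\wedge H\Z$ on the spectrum side), check agreement on $\eta$, and invoke Proposition \ref{prop_cool}. Your treatment of general coefficients and cohomology is in fact slightly more explicit than the paper's, which simply asserts that "the other cases follow."
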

\begin{proof}

We consider the following diagram of $\infty$-categories 
\[
\xymatrix{\dSet_{\infty} \ar[r] \ar[d]^{\Ch} & \Sp^{\geq 0} \ar[d]_{- \wedge H\Z}\\ (\Ch_{\geq 0})_\infty \ar[r] & \Mod(H\Z)^{\geq 0}} 
\]
which is a priori not necessarily commutative.
Here $\Sp^{\geq 0}$ denotes the $\infty$-category of connective spectra and $\Mod(H\Z)^{\geq 0}$ is the $\infty$-category of module spectra in $\Sp_{\geq 0}$ over the ring spectrum $H\Z$. The categories on the left side are the underlying $\infty$-categories of the stable model category of dendroidal sets and the category of positive chain complexes. 
The top row is an equivalence of $\infty$-categories as a consequence of Theorem \ref{thm_equivalence}. The bottom row is an equivalence of $\infty$-categories given by the extension of the Dold-Kan correspondence to spectra, Theorem 5.1.6. in \cite{SchwedeShipley} or by the fact that $\Mod(H\Z)^{\geq 0}$ has $H\Z$ as a compact generator. The left vertical map is induced by the left Quillen functor $\Ch$ studied in the previous sections. The right vertical map is given by taking the homology of a spectrum, i.e. by the smash product with $H\Z$.  

The $\infty$-category $\Mod(H\Z)^{\geq 0}$ is an additive $\infty$-category (see Definition 2.6 in \cite{GGN}). The dendroidal set $\eta$ corresponds to the sphere spectrum and its homology is just the spectrum $H\Z$ (as the sphere spectrum is the unit for the smash product). On the other hand the chain complex $\Ch(\eta)$ is just $\Z$ concentrated in degree $0$ and under Dold-Kan it corresponds to $H\Z$. 

Hence there are two left adjoint $\infty$-functors from $\dSet_{\infty}$ to $\Mod(H\Z)^{\geq 0}$ and since they coincide on $\eta$ the Proposition \ref{prop_cool} implies that these functors are equivalent.  
This proves the case of the homology with $\Z$-coefficients. The other cases follow from that.

\end{proof}

\begin{corollary} \label{equiv}
A morphism $f: X \to Y$ between dendroidal sets is a stable weak equivalence if and only if it is a homology isomorphism, i.e. $f_*: H_n(X) \to H_n(Y)$ is an isomorphism for each $n$.
\end{corollary}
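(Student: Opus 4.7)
The forward implication is already contained in Corollary \ref{cor_homologyequi}, so the real content of the statement is the converse: if $f_*: H_n(X) \to H_n(Y)$ is an isomorphism for every $n$, then $f$ is a stable weak equivalence. My plan is to reduce this to the analogous fact for spectra via the main theorem just proved.

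First, by Theorem \ref{thm_equivalence} the functor $\calK\colon \dSet \to \Sp^{\geq 0}$ detects stable weak equivalences: a map $f\colon X \to Y$ is a stable weak equivalence if and only if $\calK(f)\colon \calK(X) \to \calK(Y)$ is an equivalence of connective spectra. By Theorem \ref{mainthm}, the hypothesis that $f$ is a homology isomorphism translates into the statement that $\calK(f)$ induces an isomorphism on $H_n(-;\Z) = \pi_n(- \wedge H\Z)$ for every $n$. So the task is reduced to showing that a map of connective spectra which is an integral homology isomorphism is already an equivalence.

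To settle this remaining claim, I would pass to the homotopy cofibre $C$ of $\calK(f)$. Since $\calK(X)$ and $\calK(Y)$ are both connective, the long exact sequence of homotopy groups shows $\pi_n(C) = 0$ for $n < 0$, so $C$ is again connective. The long exact sequence in homology shows that $C$ has vanishing integral homology in all degrees. Now the classical Hurewicz theorem for spectra states that for a connective spectrum $E$, the first non-vanishing homotopy group and the first non-vanishing integral homology group coincide in the same degree. Applied inductively to $C$, this forces $\pi_n(C) = 0$ for all $n$, so $C \simeq 0$ and therefore $\calK(f)$ is an equivalence.

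The only subtle step is the Hurewicz argument for spectra; everything else is a direct invocation of Theorems \ref{thm_equivalence} and \ref{mainthm}. I do not expect any real obstacle, since the Hurewicz theorem for connective spectra is standard and the connectivity of the cofibre is immediate from the long exact sequence.
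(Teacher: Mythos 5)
Your proposal is correct and follows essentially the same route as the paper: the paper's own proof is the one-line observation that the statement holds for connective spectra by Hurewicz's theorem, combined implicitly with Theorem \ref{mainthm} and Theorem \ref{thm_equivalence}. You have merely spelled out the cofibre/Hurewicz argument that the paper leaves to the reader, and that elaboration is accurate.
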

\begin{proof}
This follows immediately since it holds for connective spectra which can be seen using Hurewicz's theorem. 
\end{proof}

\begin{corollary}\label{cor_spheres}
The spectrum associated to the dendroidal set $\Omega[T] / \partial \Omega[T]$ is equivalent to the $n$-sphere, i.e. $\Sigma^n \mathbb{S} \simeq \Sigma^\infty (S^n,*)$.
\end{corollary}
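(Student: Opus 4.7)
The plan is to identify the homotopy type of $\calK(\Omega[T]/\partial\Omega[T])$ by first computing its homology and then promoting a generator of its first non-vanishing homotopy group to an equivalence from $\Sigma^n\mathbb{S}$. Let $n = |T|$. First I would combine Theorem \ref{mainthm} with Corollary \ref{cor_zwei} to compute
\[
H_k\bigl(\calK(\Omega[T]/\partial\Omega[T])\bigr) \cong H_k(\Omega[T]/\partial\Omega[T]) \cong \begin{cases} \Z & k = n, \\ 0 & k \neq n, \end{cases}
\]
which agrees with the homology of the $n$-sphere spectrum $\Sigma^n \mathbb{S}$.

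Next, since $\calK(\Omega[T]/\partial\Omega[T])$ is connective (by Theorem \ref{thm_equivalence}) and its homology vanishes below degree $n$, the Hurewicz theorem for connective spectra implies that it is $(n-1)$-connected and that the Hurewicz map gives an isomorphism $\pi_n \cong H_n \cong \Z$. A choice of generator provides a map
\[
f \colon \Sigma^n \mathbb{S} \to \calK(\Omega[T]/\partial\Omega[T])
\]
which by naturality of the Hurewicz map induces an isomorphism on $H_n$, and hence (since both sides have homology concentrated in degree $n$) is an integral homology isomorphism.

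The remaining step, and the only real content beyond bookkeeping, is to promote this homology isomorphism to a genuine equivalence of spectra. Here I would invoke the classical fact that an integral homology isomorphism between connective spectra is a weak equivalence: the cofiber $C$ of $f$ is connective and has vanishing reduced integral homology by the long exact sequence, and a connective spectrum with vanishing integral homology has vanishing homotopy groups in every degree by induction using Hurewicz; thus $C \simeq 0$ and $f$ is an equivalence. This last step is the only subtle input but is a classical lemma in stable homotopy theory, so I do not anticipate any genuine obstacle. Alternatively, one could bypass the explicit construction of $f$ by appealing to Corollary \ref{equiv} at the level of the associated spectrum.
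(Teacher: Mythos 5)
Your proposal is correct and follows essentially the same route as the paper, which simply asserts that the only (connective) spectrum with integral homology $\Z$ concentrated in degree $n$ is $\Sigma^n\mathbb{S}$ after combining Theorem~\ref{mainthm} with Corollary~\ref{cor_zwei}. You merely spell out the standard justification of that uniqueness claim (stable Hurewicz plus the fact that a homology isomorphism of connective spectra is an equivalence), correctly noting the connectivity hypothesis that the paper leaves implicit.
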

\begin{proof}
The only spectrum $E$ such that $H_n(E) = \mathbb{Z}$ and $H_k(E) = 0$ for $k \neq n$ is $\Sigma^\infty S^n$.
\end{proof}

\begin{remark}
The last corollary has the following consequence. Let $X$ be a normal dendroidal set. We can consider the skeletal filtration 
$$
X_0 \subset X_1 \subset X_2 \subset .... \qquad\qquad \bigcup X_n = X
$$
as discussed in \cite{MoerBar}. The subquotients $X_n/ X_{n-1}$ are unions of dendroidal sets $\Omega[T] / \partial \Omega[T]$ where $T$ has $n$ vertices. After passing to the associated spectra this induces a filtration
$$
\calK(X_0) \to \calK(X_1) \to \calK(X_2) \to  .... \qquad\qquad \underrightarrow{\mathrm{lim}} \calK(X_n) \simeq \calK(X)
$$
whose subquotients $\calK{X_n}/ \calK{X_{n-1}}$ are wedges of $n$-spheres by corollary \ref{cor_spheres}. Thus it has to agree with the stable cell filtration of the spectrum $\calK(X)$. The associated spectral sequence thus is the 
Atiyah-Hirzebruch spectral sequence. It was our initial hope that the skeletal filtration of dendroidal sets would lead to more interesting filtrations of $K$-theory spectra.
\end{remark}

Let $A_\infty = N_d(Ass)$ be the dendroidal nerve of the operad for associative algebras. Note that $A_\infty$ is the presheaf of planar structures, which we earlier denoted by $P$. 

\begin{thm}\label{Ainfinity}
The homology of $A_\infty$ vanishes. Therefore the spectrum $\calk(A_\infty)$ is trivial. 
\end{thm}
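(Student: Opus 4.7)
The strategy is to show $H_n(A_\infty) = 0$ for all $n$; the spectrum statement then follows from Theorem~\ref{mainthm} together with the fact that a connective spectrum with vanishing integral homology is contractible (equivalently, apply Corollary~\ref{equiv} to the map $\emptyset \to A_\infty$, noting that $\calK(\emptyset)\simeq \ast$ since the stable model category of dendroidal sets is pointed).

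First I would describe $\Ch^{un}(A_\infty)$ concretely. Since $A_\infty(T) = P(T)$ and the action of $\Aut(T)$ on $P(T)$ is free (any tree automorphism preserving a planar structure must fix every input order and so be the identity), the generators of $\Ch^{un}(A_\infty)_n$ are in natural bijection with isomorphism classes of planar trees with $n$ vertices; choosing the representative $[T, p, p]$ in which the dendrex coincides with the chosen planar structure, the differential becomes the planar signed sum of face trees described in Section~3.

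Next I would apply the abstract acyclicity criterion of Proposition~\ref{acyclicity} from Section~7, in the same spirit as the proof of Lemma~\ref{AcycDeg}: split the generators into a canonical class $A_n$ and a non-canonical class $B_n$, together with a size-raising bijection $\varphi\colon B_n \to A_{n+1}$ so that $d[\varphi(T)]$ equals $\pm[T]$ plus terms of strictly lower weight. My candidate is to call a planar tree canonical when its root vertex has arity at least one and carries a stump (a $C_0$-vertex) as its leftmost input, and non-canonical otherwise; the bijection $\varphi$ then adjoins a new leftmost input to the root vertex together with a fresh stump attached to it, with the convention $\varphi(\eta) = C_0$. The inner face of $\varphi(T)$ contracting the new edge produces $-[T]$, supplying the triangular leading term; the remaining faces — namely the top face chopping the new stump (which yields the tree $T^L$ obtained from $T$ by adding a leftmost leaf on the root), the bottom face at the root vertex when $T$'s root has only leaves apart from the new edge, and the inherited faces $\pm[\varphi(\partial_f T)]$ from faces of $T$ — must be controlled by the weight function.

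The hard part will be calibrating that weight function so every auxiliary term above is strictly lower than $[T]$. The naive variant "adjoin a unary vertex at the root" fails because the inner face and the bottom face of the adjoined vertex both return $[T]$ with opposite signs and cancel, leaving no triangular leading term; attaching a stump instead of a unary vertex breaks that cancellation, which is the essential combinatorial idea. A weight based on the lexicographic position of the leftmost non-stump feature at the root (or simply on the number of leaves adjacent to the root) is the natural candidate. The exceptional low-dimensional cases require dedicated bookkeeping: the case $T=\eta$ is handled by $\varphi(\eta)=C_0$, since $d[C_0]=-[\eta]$ kills the generator in degree zero; and the case $T = C_0$, whose would-be image $\varphi(C_0)$ would contain a unary vertex, is most cleanly dealt with by passing to the normalized complex $\Ch(A_\infty)$, which by Proposition~\ref{gen_cof} and the results of Section~5 is quasi-isomorphic to $\Ch^{un}(A_\infty)$ for the normal dendroidal set $A_\infty$.
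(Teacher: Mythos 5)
Your global strategy --- describe $\Ch^{un}(A_\infty)$ by isomorphism classes of planar trees, split the generators into canonical and non-canonical ones, and feed a weight-decreasing matching into Proposition~\ref{acyclicity} --- is exactly the paper's, and your observation that a stump (rather than a unary vertex) must be attached in order to avoid the cancellation between the inner and the outer face is the right key idea. However, the specific matching you propose does not work, for two concrete reasons. First, anchoring everything at the root vertex breaks the bijection $B_n \to A_{n+1}$: the tree $S$ with a root vertex of arity $2$ carrying two stumps is canonical in your sense, but its only possible preimage under $\varphi$ (remove the leftmost input of the root together with its stump) is the arity-one root with a stump, which is again canonical; hence $S$ lies in $A_3$ but not in the image of $B_2$, and Proposition~\ref{acyclicity} is not applicable. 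Second, even where $\varphi$ behaves well, the top face of $\varphi(T)$ that chops off the new stump returns the non-canonical tree $T^L$ obtained from $T$ by adding a leftmost leaf at the root; $T^L$ has strictly \emph{more} leaves than $T$ (and more root-adjacent leaves), so neither of your candidate weight functions decreases on that term. Your exceptional case $T=C_0$ is a third symptom of the same problem: $\varphi(C_0)$ is the degenerate ``lollipop'' (unary root vertex with a stump on top), and in $\Ch^{un}(A_\infty)$ its inner face and its bottom face both give $[C_0]$ with opposite signs, so the would-be leading term cancels; passing to the normalized complex does not repair this, it merely deletes the partner of $C_0$ altogether.

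The paper's fix is to anchor the construction at the other end of the tree: a generator is canonical when the \emph{leftmost branch} (always take the leftmost input, starting from the root) terminates in a stump, and $\hat{x}$ is obtained by capping the \emph{leftmost leaf} of a non-canonical $x$ with a stump. Un-capping a canonical tree then always produces a non-canonical one (its leftmost branch now ends in a leaf), so the matching really is a bijection; capping strictly decreases the total number of leaves, which serves as the weight; the leading term is the top face chopping off the added stump, which returns $x$ itself; and every other face of $\hat{x}$ either still contains the added stump at the end of its leftmost branch (hence is canonical, of weight $0$) or arises from contracting the edge just below the added stump (hence has one leaf fewer than $x$). If you re-run your argument with this leaf-anchored matching, all three of the difficulties above disappear.
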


\begin{proof}
By definition, the generators of the free abelian group $Ch^{un}(A_\infty)_n$ are in bijection with the isomorphism classes of planar structures of trees with $n$ vertices. More precisely, for each tree $T$ there is exactly one generator for each orbit of the action of the group $\Aut(T)$ on the set of planar structures of $T$. Hence we may represent the generators by planar trees with all the edges of the same colour, keeping in mind that isomorphic planar trees are identified. 

For example, for each of the following two shapes the two planar structures get identified, so there is only one generator: 
\[
\xymatrix@R=10pt@C=12pt{
&&&&&&&\\
&&&&&&&\\
&&&& *=0{\bullet} \ar@{-}[ul] \ar@{-}[u] \ar@{-}[ur]  &&& \\
&&&&*=0{}\ar@{-}[u] &&&
}
\quad 
\xymatrix@R=10pt@C=12pt{
&&&&&&&\\
&&& *=0{\bullet} && *=0{\bullet} &&\\
&&&& *=0{\bullet} \ar@{-}[ul] \ar@{-}[ur]  &&& \\
&&&&*=0{}\ar@{-}[u] &&&
}
\]
but the following two planar trees are representing two different generators:  
\[
\xymatrix@R=10pt@C=12pt{
&&&&&&&\\
&&& *=0{\bullet} \ar@{-}[u] && *=0{\bullet}  &&\\
&&&& *=0{\bullet} \ar@{-}[ul] \ar@{-}[ur]  &&& \\
&&&&*=0{}\ar@{-}[u] &&&
}
\quad 
\xymatrix@R=10pt@C=12pt{
&&&&&&&\\
&&& *=0{\bullet} && *=0{\bullet} \ar@{-}[u] &&\\
&&&& *=0{\bullet} \ar@{-}[ul] \ar@{-}[ur]  &&& \\
&&&&*=0{}\ar@{-}[u] &&&
}
\]
We call a generator \emph{canonical} if the leftmost top vertex of such a representative is a stump. For example, in the above pictures, the planar trees on the right represent canonical generators, while the ones on the left represent non-canonical generators.

Let $A_n$ (i.e. $B_n$) be the set of canonical (i.e. non-canonical) generators of $\Ch^{un}(A_\infty)_n$.
A bijection between $x\in B_n$ and $\hat{x}\in A_{n+1}$ is obtained by putting a stump on the leftmost leaf of the chosen representative of a non-canonical generator in $B_n$. 

Obviously, a dendrex with no vertices has no stumps, so $A_0$ is empty. The set $B_0$ is a singleton, consisting of the tree with one edge. Also $A_1$ is a singleton containing just the stump. 
For every generator $x$ we define its weight $w(x)$ as the number of leaves of the planar tree representing it if $x$ is non-canonical and $w(x)=0$ if $x$ is canonical. 

If $x$ is non-canonical, then $\hat{x}$ has exactly one leaf less than $x$. Every other face of $\hat{x}$ is either canonical (containing the added stump) or it is a non-canonical face obtained by contracting the edge just below the added tree, so it has one leaf less than $x$.  This shows that all the assumptions of Proposition \ref{acyclicity} hold. Hence all homology groups of $A_\infty$ vanish. 
\end{proof}

%%%%%%%%%%%%%%%%%%%%%%%%%%%%%%%%%%%%%%%%%%%%%%%%%%%%%%%%%%%%%%%%%%%%%%%%%%%%%%%%%%%%%%%%%%%%%%%%%%%%%%%%%%%%%%%%%%%%%%%%%%%%%%%%%%%%%%%%%%%%%%%%%%%%%%%%%%%%%%%%%%%%%%%%%%%%%%%%%%%%%%%%%%%%%%%%%%%%%%%%%%%%%%%%
\section{Acyclicity argument}
%%%%%%%%%%%%%%%%%%%%%%%%%%%%%%%%%%%%%%%%%%%%%%%%%%%%%%%%%%%%%%%%%%%%%%%%%%%%%%%%%%%%%%%%%%%%%%%%%%%%%%%%%%%%%%%%%%%%%%%%%%%%%%%%%%%%%%%%%%%%%%%%%%%%%%%%%%%%%%%%%%%%%%%%%%%%%%%%%%%%%%%%%%%%%%%%%%%%%%%%%%%%%%%%

In this section we finally prove the technical proposition which we have used in Lemma \ref{AcycDeg} and Theorem \ref{Ainfinity}  to show acyclicity of certain chain complexes. 

\begin{proposition} \label{acyclicity}
Let $C_\bullet$ be a chain complex such that all $C_n$ are free abelian groups which have a grading 
\[
C_n = \bigoplus_{i\in \mathbb{N}_0} C_{n,i}.
\] 
For $\displaystyle x\in \bigoplus_{i=0}^ m C_{n,i}\setminus \bigoplus_{i=0}^{m-1}C_{n,i}$ we write $w(x)=m$. Let $A_n$ and $B_n$ be a basis for $C_{n,0}$ and $\displaystyle \bigoplus_{i>0} C_{n,i}$, respectively. Assume there is a bijection between the sets $B_n$ and $A_{n+1}$ which sends $x\in B_n$ to $\hat{x}\in A_{n+1}$ and one of the following two statements holds
\begin{equation*}
w(x-d(\hat{x})) < w(x) \quad \textrm{ or } \quad w(x+d(\hat{x})) < w(x).
\end{equation*}
Then $H_0(C_\bullet)=\mathbb{Z} \langle A_0 \rangle$ and $H_n(C_\bullet)=0$ for all $n\geq 1$. 
\end{proposition}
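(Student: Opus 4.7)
The plan is to establish acyclicity via a weight-reduction procedure that uses the matching $x\mapsto \hat{x}$ to simplify every chain modulo boundaries to an element of $\mathbb{Z}\langle A_n\rangle$, combined with an injectivity lemma for $d$ restricted to $\mathbb{Z}\langle A_n\rangle$.

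First I will show, by induction on the maximal weight $m$ appearing in $\alpha\in C_n$, that there exist $\alpha'\in \mathbb{Z}\langle A_n\rangle$ and $\beta\in C_{n+1}$ with $\alpha=\alpha'+d\beta$. The base case $m=0$ gives $\alpha\in \mathbb{Z}\langle A_n\rangle$ already. For the inductive step, decompose $\alpha=\sum_j c_j x_j+\tilde\alpha$ where the $x_j\in B_n$ are the distinct basis vectors of weight exactly $m$ appearing in $\alpha$ and $w(\tilde\alpha)<m$. By hypothesis, for each $x_j$ there is a sign $\epsilon_j\in\{\pm 1\}$ with $d(\hat{x}_j)=\epsilon_j x_j+r_j$ where $r_j\in \bigoplus_{i<m}C_{n,i}$. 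Setting $\beta:=\sum_j \epsilon_j c_j \hat{x}_j$ one computes $d\beta=\sum_j c_j x_j+\sum_j \epsilon_j c_j r_j$, so $\alpha-d\beta$ has weight strictly less than $m$ and the induction closes.

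The key lemma is that for every $n\geq 1$, if $\alpha'=\sum_j c_j\hat{y}_j\in\mathbb{Z}\langle A_n\rangle$ with the $y_j\in B_{n-1}$ pairwise distinct and all $c_j\ne 0$, then the weight-positive part of $d\alpha'$ is nonzero. Indeed, $d\alpha'=\sum_j c_j\epsilon_j y_j+\sum_j c_j r_j$ with $w(r_j)<w(y_j)$. Let $m^\ast$ be the maximal $w(y_j)$ and fix any $k$ with $w(y_k)=m^\ast$. Every $r_j$ lies in $\bigoplus_{i<m^\ast}C_{n-1,i}$ (using $w(r_j)<w(y_j)\leq m^\ast$), so it cannot contribute to the coefficient of the basis element $y_k\in C_{n-1,m^\ast}$, and that coefficient equals $c_k\epsilon_k\ne 0$. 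In particular, both $d\alpha'=0$ and, more generally, $d\alpha'\in\bigoplus_{i<m^\ast}C_{n-1,i}$ (and a fortiori $d\alpha'\in\mathbb{Z}\langle A_{n-1}\rangle$ when $n=1$) force $\alpha'=0$.

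These two facts assemble into the statement. For $n\geq 1$, any cycle $\alpha\in C_n$ decomposes by the reduction as $\alpha=\alpha'+d\beta$ with $\alpha'\in \mathbb{Z}\langle A_n\rangle$ a cycle; the lemma forces $\alpha'=0$, so $\alpha$ is a boundary and $H_n(C_\bullet)=0$. For $n=0$, the reduction exhibits $\mathbb{Z}\langle A_0\rangle\to H_0(C_\bullet)$ as surjective; for injectivity, suppose $\alpha_0\in\mathbb{Z}\langle A_0\rangle$ equals $d\gamma$ for some $\gamma\in C_1$, apply the reduction to write $\gamma=\gamma'+d\delta$ with $\gamma'\in\mathbb{Z}\langle A_1\rangle$, and note that $\alpha_0=d\gamma'$ has weight zero, which by the lemma forces $\gamma'=0$ and hence $\alpha_0=0$. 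I expect the main technical care to go into the second step, where one must verify precisely that the lower-weight remainders $r_j$ cannot interfere with the coefficient of the chosen maximal-weight basis element $y_k$.
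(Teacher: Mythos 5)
Your proof is correct and runs on the same engine as the paper's: the matching $x\mapsto\hat{x}$ combined with induction on the weight reduces chains, modulo boundaries, into $C_{n,0}$, after which an injectivity statement for $d$ on $C_{n,0}$ finishes the computation. The execution differs in two places. The paper performs the reduction one basis element at a time, producing for each $x\in B_n$ a lift $\bar{x}\in C_{n+1,0}$ with $d\bar{x}=x+(\text{weight-}0\text{ term})$; linear independence of $\{d\bar{x}\}$ is then immediate from $C_{n,0}\cap\bigoplus_{i>0}C_{n,i}=0$, and the proof concludes by exhibiting $C_n=\Ker d\oplus C_{n,0}=\mathrm{span}\{d\bar{x}\}\oplus C_{n,0}$. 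You instead reduce arbitrary chains (the same construction, by linearity) and prove injectivity of $d$ on $\mathbb{Z}\langle A_n\rangle$ directly on the basis $\{\hat{y}\}$ by extracting the coefficient of a maximal-weight $y_k$ in $d\bigl(\sum_j c_j\hat{y}_j\bigr)$; this lets you conclude at once that a cycle reduced into $\mathbb{Z}\langle A_n\rangle$ vanishes, and it also makes the identification $H_0=\mathbb{Z}\langle A_0\rangle$ explicit, which the paper's written proof leaves implicit. One caveat, shared with the paper's own argument: both proofs use that an element of weight $<m$ expands in basis vectors of weight $<m$ (your claim that the remainders $r_j$ cannot contribute to the coefficient of $y_k$; the paper's claim that $y$ is a sum of $y_i\in B_n$ with $w(y_i)<w(x)$). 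This is automatic when the bases $B_n$ are weight-homogeneous, as they are in every application in the paper, but it is not literally forced by the hypotheses as stated, so it deserves a sentence.
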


\begin{proof}
First, for each $x\in B_n$ we construct an element $\bar{x}\in C_{n+1,0}$ such that 
\[
x-d(\bar{x})\in C_{n,0}. 
\]

We proceed by induction on $w(x)$. If $w(x)=1$, we can take $\bar{x}$ to be $\hat{x}$ or $-\hat{x}$ and the statement follows by assumption. 
Let $w(x)>1$ and assume that the statement holds for all $y\in B_n$ such that $w(y)<w(x)$. We let $x'=\pm\hat{x}$, where the sign $\pm$ is such that $w(x-d(x'))<w(x)$. We write
\[
x-d(x')=z+y
\] 
where $z\in C_{n,0}$, $y\in C_n\setminus C_{n,0}$, and $y$ is a finite sum of elements $y_i$ in $B_n$ such that $w(y_i)<w(x)$ for $i=1,\ldots, k$. By the inductive hypothesis, we have $\bar{y_i}\in C_{n+1,0}$ such that $y_i-d(\bar{y_i})\in C_{n,0}$, for $i=1, \ldots, k$.  Our claim now follows if we let $\bar{x}=x'+\sum_i \bar{y_i}$.

Note that this same inductive argument shows that every element $\hat{x}$, $x\in B_n$, can be written as a linear combination of elements of the set 
$\{\bar{x} : x\in B_n\}$. As we assumed $A_{n+1}=\{\hat{x} : x\in B_n\}$ is a basis for $C_{n+1,0}$, it follows that the set $\{\bar{x} : x \in B_n\}$ generates $C_{n+1,0}$. 

We will show that the set $\{d(\bar{x}) : x \in B_n\}$ is linearly independent, for every $n$. 
Let  us assume $\sum_{i=1}^k \alpha_i d(\bar{x_i})=0$ for some $x_1,...,x_k\in B_n$.  We can write $d(\bar{x_i})=x_i+y_i$, where $y_i\in C_{n,0}$ for $i=1,2,...,k$. Hence we have 
\[
\sum_{i=1}^k \alpha_i x_i+\sum_{i=1}^k \alpha_i y_i=0.
\]
We conclude that $\alpha_i=0$ for all $i$ since $y_1,\ldots, y_k\in C_{n,0}$, $x_1,\ldots, x_k\in B_n$ and $B_n$ is a  basis for $C_n\setminus C_{n,0}$. Since $d$ is linear, the set $\{\bar{x} : x \in B_n\}$ is also linearly independent, for every $n$. This implies that the set $\{\bar{x} : x \in B_n\}$ is a basis for $C_{n+1,0}$. 

Next we show that the restriction $d\colon C_{n,0} \to \img d$ is surjective. 
Let $y=d(a+b)$ be an element of $\img d$ with $a\in C_{n,0}$ and $b\in C_{n}\setminus C_{n,0}$. There is an element $\bar{b}\in C_{n+1,0}$ such that $b-d\bar{b} \in C_{n,0}$. 
Since $d^2=0$ we have $y=d(a+b)=d(a+b)-d(d(\bar{b}))=d(a+b-d\bar{b})\in d(C_{n,0})$. 
It follows that $\{d(\bar{x}) : x\in B_n\}$ is a basis for $\img d$. We conclude that the restriction 
\[
d\colon C_{n+1, 0}  \to \img d=\text{span}\{d(\bar{x})\}
\]
is an isomorphism for every $n$. 

Furthermore, this implies that $\Ker d$ is disjoint with $C_{n,0}$ for every $n$.  As $d(\bar{x})\in \Ker d$, the set $\{d(\bar{x}) : x \in B_n\}$ is also disjoint with $C_{n,0}$ and by the construction of $\bar{x}$ we have that $\text{span}\{d(\bar{x}) \} \oplus C_{n,0} = C_n$. 
We also have $\Ker d \oplus C_{n,0}=C_n$ because $C_{n,0} \to \img d$ is an isomorphism. Since $\text{span}\{d(\bar{x}) \}\subseteq \Ker d$, we must have \[ \Ker d = \text{span}\{d(\bar{x}) \} = \img d, \] so $H_n(C_\bullet)=0$ for all $n\geq 1$. 
\end{proof}

\bibliographystyle{amsalpha}
\bibliography{Alles}

\end{document}